\numberwithin{equation}{section}
\newtheorem{theorem}{Theorem}[section]
\newtheorem{lemma}[theorem]{Lemma}
\newtheorem{proposition}[theorem]{Proposition}
\newtheorem{corollary}[theorem]{Corollary}
\newtheorem{remark}[theorem]{Remark}
\newcounter{conj}
\newtheorem{conjecture}[conj]{Conjecture}
\theoremstyle{definition}
\renewcommand{\tilde}{\widetilde}          
\DeclareMathSymbol{\leqslant}{\mathalpha}{AMSa}{"36} 
\DeclareMathSymbol{\geqslant}{\mathalpha}{AMSa}{"3E} 
\DeclareMathSymbol{\eset}{\mathalpha}{AMSb}{"3F}     
\renewcommand{\leq}{\;\leqslant\;}                   
\renewcommand{\geq}{\;\geqslant\;}                   
\newcommand{\C}{\mathbb{C}}
\newcommand{\D}{\mathbb{D}}
\newcommand{\R}{\mathbb{R}}
\newcommand{\Z}{\mathbb{Z}}
\renewcommand{\H}{\mathbb{H}}
\newcommand{\N}{\mathbb{N}}
\newcommand{\E}{\mathds{E}}
\renewcommand{\P}{\mathds{P}}
\DeclareDocumentCommand \Pmp { m m o} {
\IfNoValueTF{#3}
{P_{#1}^{#2}}
{P_{#1}^{#2}\left(#3\right)}
}
\DeclareDocumentCommand \Emp { m m o} {
\IfNoValueTF{#3}
{E_{#1}^{#2}}
{E_{#1}^{#2}\left[#3\right]}
}
\DeclareDocumentCommand \Pbr { m m m m o } {
\IfNoValueTF{#5}
{P_{#1}^{#2\stackrel{#4}{\rightarrow} #3}}
{P_{#1}^{#2\stackrel{#4}{\rightarrow} #3}\left(#5\right)}
}
\DeclareDocumentCommand \Ebr { m m m m o } {
\IfNoValueTF{#5}
{E_{#1}^{#2\stackrel{#4}{\rightarrow} #3}}
{E_{#1}^{#2\stackrel{#4}{\rightarrow} #3}\left[#5\right]}
}
\def\S{\mathbb{S}}
\def\bi{\begin{itemize}}
\def\ei{\end{itemize}}
\def\bnum{\begin{enumerate}}
\def\enum{\end{enumerate}}
\def\<#1{\langle #1 \rangle}
\title{Liouville Quantum Gravity on the unit disk}
\author{ Yichao Huang \footnote{ENS Ulm, DMA, 45 rue d'Ulm,  75005 Paris, France.} , R\'emi Rhodes \footnote{Universit{\'e} Paris-Est Marne la Vall\'ee, LAMA, Champs sur Marne, France.}, Vincent Vargas \footnote{ENS Ulm, DMA, 45 rue d'Ulm,  75005 Paris, France.} }
\date{\vspace{-5ex}}
\begin{document}

\maketitle

 \begin{abstract}
Our purpose is to pursue the rigorous construction of Liouville Quantum Field Theory  on Riemann surfaces initiated by F. David, A. Kupiainen and the last two authors in the context of the Riemann sphere and inspired by the 1981 seminal work by Polyakov. In this paper, we investigate the case of simply connected domains with boundary.  We also make precise conjectures about the relationship of this theory to scaling limits of random planar maps with boundary conformally embedded onto the disk.  
\end{abstract}
 
\noindent{\bf Key words or phrases:}  Liouville Quantum Gravity, quantum field theory, Gaussian multiplicative chaos, KPZ formula, KPZ scaling laws, Polyakov formula, conformal anomaly.

\noindent{\bf MSC 2000 subject classifications:  60D05, 81T40,  81T20.}     

\tableofcontents

\section{Introduction}
Let us begin this introduction with a soft attempt of explanation for mathematicians of what is Liouville Quantum Field Theory (LQFT). This theory may be better understood if we first briefly recall the Feynman path integral representation  of Brownian motion on $\R^d$.  Denoting by $\Sigma$ the space of  paths $\sigma: [0,T]\to \R^d$ starting from $\sigma(0)=0$, we define the action functional on $\Sigma$ by
\begin{equation}\label{BM}
\forall \sigma \in\Sigma,\quad S_{{\rm BM}}(\sigma)=\frac{1}{2}\int_0^T|\dot{\sigma}(r)|^2\,dr.
\end{equation}
It is nowadays rather well understood that Brownian motion, call it $B$, can be understood in terms of Feynman path integrals via the relation
\begin{equation}\label{BMpath}
\E[F((B_s)_{s\leq T})]=\frac{1}{Z}\int_{\Sigma}F(\sigma)e^{-S_{BM}(\sigma)}D\sigma
\end{equation}
 where $D\sigma $ stands for a formal uniform measure on $\Sigma$ and $Z$ a renormalization constant. Brownian motion is also often said to be the canonical uniform random path in $\R^d$: this terminology is due to the fact the Brownian motion is the scaling limit of the simple random walk.

The reader may try to guess what could be the above picture if, instead of ``canonical random path'', we ask for a ``canonical random Riemann surface''. The answer is LQFT\footnote{Liouville Quantum Field Theory (LQFT) and Liouville Quantum Gravity (LQG) are similar for the unit disk but they differ on higher genus surfaces, see \cite{GRV} for references and discussions.}.  As in the case of Brownian motion, there are two ways to give sense to this theory: directly in the continuum in terms of Feynmann surface integrals or as scaling limit of suitable discrete models called Random Planar Maps (RPM). This picture is nowadays well understood in the physics literature since the pioneering work by Polyakov \cite{Pol}.  The reader is referred to \cite{witten,nakayama} for physics reviews, to \cite{Pol,cf:Da,DistKa,cf:KPZ} for founding papers in physics and to \cite{DKRV} for a brief introduction for mathematicians and a rigorous construction on the Riemann sphere.

In this paper, we will construct LQFT on Riemann surfaces with boundary directly in the continuum in the spirit of   Feynman surface integrals. More precisely, we consider   a (strict) simply connected domain $D$ of $\R^2$ with a simple boundary  equipped with a Riemannian metric $g$. Similar to the action \eqref{BM} for Brownian motion, we must consider  the Liouville action functional on such a Riemannian manifold. It is defined for each function $X: \overline{D}\to\R$ by
\begin{equation}\label{actionfalse}
S(X,g):= \frac{1}{4\pi}\int_{D}\big(|\partial^{g}X |^2+QR_{g} X  + 4\pi \mu e^{\gamma X  }\big)\,\lambda_{g} 
+\frac{1}{2\pi}\int_{\partial D}\big(QK_{g} X  +   2\pi \mu_\partial e^{\frac{\gamma}{2} X  }\big)\,\lambda_{\partial g} 
\end{equation}
where $\partial^{g}$, $R_{g} $, $K_{g} $, $\lambda_{g}$ and $\lambda_{\partial g}$ respectively stand for the gradient, Ricci scalar curvature, geodesic curvature (along the boundary), volume form and line element along $ \partial D$ in the metric $g$: see section \ref{defmetric} for the definitions. The parameters $\mu,\mu_\partial\geq  0$ (with $\mu+\mu_\partial>0$) are respectively the bulk and boundary cosmological constants  and  $Q,\gamma$ are real parameters.

\medskip
Before going into further details of the quantum field theory, let us first make a detour in Riemannian geometry to explain why the roots of LQFT are deeply connected to the theory of uniformization of Riemann surfaces. Indeed, a fundamental problem in geometry is to uniformize the surface $( \overline{D},g)$: this means that we look for a metric $g'$ on $ \overline{D}$ conformally equivalent to $g$, i.e. $g'=e^{u}g$ for some smooth function $u$ on $ \overline{D}$, with constant Ricci scalar curvature  in $D$ and constant geodesic curvature on $\partial D$.  Under appropriate assumptions, the unknown function $u$ is a minimizer of the Liouville action functional \eqref{actionfalse}. Indeed, for the particular value
\begin{equation}\label{Qconf}
 Q=\frac{2}{\gamma},
 \end{equation}
the saddle points  $X$ of this functional  with Neumann boundary condition $\partial_{n_{g}}  (\frac{\gamma}{2} X)+K_g=- \frac{\pi \mu_{\partial} \gamma^2}{2} e^{\frac{\gamma}{2}X}$, where $\partial_{n_{g}}$ stands for the Neumann operator along $\partial D$, solve (if exists) the celebrated {\it Liouville equation} 
\begin{equation}\label{eqliouv}
-\triangle_g (\gamma X)+R_g=-2\pi  \mu \gamma^2 e^{\gamma X} \quad \text{on }D,\quad  \partial_{n_{g}}  (\frac{\gamma}{2} X)+K_g=- \frac{\pi \mu_{\partial} \gamma^2}{2} e^{\frac{\gamma}{2}X}     \quad \text{on }\partial D.
\end{equation}
Setting $u=\gamma X$   and defining a new metric $g'=e^{u}g$, the metric $g'$ satisfies the relations
$$ R_{g'}=-2\pi \mu \gamma^2 \quad \text{and }\quad  K_{g'} =-\frac{ \pi \mu_\partial \gamma^2}{2},$$
hence providing a solution to the uniformization problem of the Riemann surface $( \overline{D},g)$. Let us further mention that, for the value of $Q$ given by \eqref{Qconf}, this theory is {\it conformally invariant}: this means that if we choose a conformal map $\psi: \, \tilde{D} \mapsto D$ then the couple $(X,g)$ solves \eqref{eqliouv} on $D$ if and only if $(X \circ \psi +Q\ln|\psi'|, g \circ \psi)$ solves \eqref{eqliouv} on $\tilde{D}$ \footnote{Let us prove this for the Neumann boundary condition; the other equation can be dealt with similarly. Since $\psi$ is an isometry from $(\tilde{D}, g \circ \psi |\psi'|^2)$ to $(D,g)$, we have $K_{g \circ \psi |\psi'|^2}= K_g \circ \psi$. Now applying formula \eqref{geod} which is valid in great generality, we get that $K_{g \circ \psi}= |\psi'| (K_g \circ \psi- \frac{1}{(g \circ \psi)^{1/2} |\psi'|} \ln |\psi'|) $. Hence we get that $$ \partial_{n_{g \circ \psi}}  (\frac{\gamma}{2} ( X \circ \psi + Q \ln |\psi'|))+K_{g \circ \psi}= |\psi'| (\frac{1}{g \circ \psi} \frac{\partial ( \frac{\gamma}{2}X)}{\partial n} \circ \psi +K_g \circ \psi )=  - \frac{\pi \mu_{\partial} \gamma^2}{2} e^{\frac{\gamma}{2}(X \circ \psi + Q \ln |\psi'|)}  $$ }. These are the foundations of the theory of uniformization of surfaces with boundary in $2d$, also called  {\it Classical Liouville field theory}.  


\medskip
In quantum (or probabilistic) Liouville field theory, one looks for the construction of a random field $X$ 
with law given heuristically  in terms of a  functional integral 
\begin{equation}\label{pathintegral}
\E[F(X)]=Z^{-1}\int F(X)e^{-S(X,g)}DX 
\end{equation}
where $Z$ is a normalization constant and $DX$ stands for a formal uniform measure on some space of maps  $X: \overline{D}\to\R$. This expression is in the same spirit as for the Brownian motion \eqref{BMpath}. This formalism describes the law of the log-conformal factor $X$ of a formal random metric of the form $e^{\gamma X}g$ on $D$. Of course, this description is purely formal and giving a mathematical description of this picture is a longstanding problem since the work of Polyakov \cite{Pol}.  It turns out that for the particular values 
$$\gamma\in ]0,2],\quad Q=\frac{2}{\gamma}+\frac{\gamma}{2},$$ this field theory is expected to become a {\it Conformal Field Theory} (see \cite{gaw} for a background on this topic). 
The aim of this paper is to make rigorous sense of the above heuristic picture and thereby to define a canonical random field $X$ inspired by Feynman surface integrals. A noticeable difference with the example of Brownian motion where there is only one canonical random path (up to reparametrization) is that there is a whole family of canonical random Riemann surfaces indexed by a single parameter $\gamma\in]0,2]$. Conformal Field Theories are characterized by their central charge $c\in\R$ that reflects the way the theory reacts to conformal changes of the background metric $g$ defined on $D$ (see Section~\ref{sec:weyl}). For the LQFT, we will establish that the central charge  is $c=1+6 Q^2$: thus it  can range continuously in the interval $[25,+\infty[$ and this is one of the interesting features of this theory.  We will also study the conformal covariance (KPZ formula) and $\mu,\mu_\partial$-dependence of this theory. Once constructed, the Liouville (random) field $X$ allows us to define  the  Liouville measure, which can be thought of as the volume form associated to the random metric tensor  $e^{\gamma X}g$. We will  state a precise mathematical conjecture  on the relationship between the Liouville measure and the scaling limit of random planar maps with a simple boundary conformally embedded onto the unit disk.
 
To conclude,  let us stress that the thread of the paper is inspired by \cite{DKRV} where the authors developped LQFT on the Riemann sphere. The main input is here to understand the phenomena related to the presence of a boundary; in particular, part of the construction relies on the theory of Gaussian multiplicative chaos (GMC) and the presence of the boundary requires to integrate against GMC measures functions that are not integrable with respect to Lebesgue measure when approaching the boundary (these technical difficulties do not appear in the case of the sphere \cite{DKRV} where there is no boundary): see Proposition~\ref{finitemeas} for instance.  

\subsection{On the difference between the David-Kupiainen-Rhodes-Vargas approach and the Duplantier-Miller-Sheffield approach}

There is a conceptual difference between the approach of  Duplantier-Miller-Sheffield developped in \cite{DMS} and the approach developed independently by David-Kupiainen and the last two authors of this paper in the work \cite{DKRV} where  was developped LQFT on the Riemann sphere. The point of view and the objects defined in both approaches are different though one can relate both approaches in a specific case as we now describe in the case of the sphere. 

\subsubsection{The case of the Riemann sphere}

In the work of David-Kupiainen-Rhodes-Vargas \cite{DKRV}, the authors construct the correlation functions of LQFT on the Riemann sphere $\S^2=\C\cup \lbrace \infty \rbrace$ and show that these correlation functions satisfy the axioms of a Conformal Field theory (CFT): conformal covariance (KPZ formula), Weyl anomaly, etc... The theory is indexed just like the disk by a parameter $\gamma \in ]0,2]$ (with $Q=\frac{\gamma}{2}+\frac{2}{\gamma}$) and a positive cosmological constant $\mu>0$ (recall that in the case of the disk, the theory requires two cosmological constants: one for the bulk and one for the boundary). The theory is now based on the following action 
\begin{equation}\label{Liouvillesphere}
S_{\S^2}(X,g):= \frac{1}{4\pi}\int_{\S^2}\big(|\partial^{g}X |^2+QR_{g} X  + 4\pi \mu e^{\gamma X  }\big)\,\lambda_{g} 
\end{equation}
where this time $\partial^{g}$, $R_{g} $, $\lambda_{g}$ respectively stand for the gradient, Ricci scalar curvature, volume form in the metric $g$. The symmetry of the theory is completely determined by $\gamma$, however the constant $\mu>0$ is essential for the existence of the theory and in particular the correlation functions. As an output of the construction, one can define (see Les Houches lecture notes \cite{Houches} for a simple introduction to the theory):
\begin{itemize}
\item
Correlation functions at points $(z_i)_{1 \leq i \leq n}$ and with weights $(\alpha_i)_{1\leq i\leq n}$ of ``variables'' $e^{\alpha_i X(z_i)}$ (called vertex operators in the physics literature) which correspond to a rigorous definition of the following heuristic path integral formulation
\begin{equation}\label{Correlation}
\langle \prod_{1 \leq i \leq n}  e^{\alpha_i X(z_i)}\rangle:= \int  \prod_{1 \leq i \leq n}  e^{\alpha_i X(z_i)}  e^{-S_{\S^2}(X,g)} DX.  
\end{equation}
Of course, to make sense of \eqref{Correlation}, one needs to regularize $X$ and take the limit as the regularization step goes to $0$.
These correlation functions exist if the $\alpha_i$ satisfy the so-called Seiberg bounds:
\begin{equation}\label{Seib}
\sum^n_{i=1} \alpha_i >2Q, \quad \text{and} \quad \alpha_i <Q, \; \forall i.
\end{equation}
\item
Random measures we will denote $Z_{(\alpha_i,z_i)_{1\leq i \leq n}}$ (when the $\alpha_i$ satisfy the bounds \eqref{Seib}) and unit volume random measures we will denote $Z_{(\alpha_i,z_i)_{1 \leq i \leq n}}^1$ (the unit volume measures can be defined under less restrictive conditions than \eqref{Seib}: see \cite{Houches}). These are the so-called Liouville measures of the theory. The random measure $Z_{(\alpha_i,z_i)_{1 \leq i \leq n}}$ is a rigorous construction of $e^{\gamma X} \lambda_{g}$ under the probability measure
\begin{equation*}
F \mapsto \frac{\langle  F \: \prod_{1 \leq i \leq n}  e^{\alpha_i X(z_i)}\rangle }{\langle \prod_{1 \leq i \leq n}  e^{\alpha_i X(z_i)}\rangle}.
\end{equation*}  
Most of these measures can be related (conjecturally) to planar maps; however, we will only discuss the simplest measure among these measures, namely $Z_{(\gamma,0), (\gamma,1), (\gamma,\infty)}^1$, which is related to the random measure constructed in the work of Duplantier-Miller-Sheffield \cite{DMS}.
\end{itemize}

One important aspect of the construction is that it provides very explicit expressions of the correlations and the distributions for the measures in terms of products of fractional moments of appropriate GMC measures: see Les Houches lecture notes \cite{Houches}. Also, the moments of the measures can be expressed in terms of the correlation functions. More precisely, one has the following formula for a measurable set $B \subset \S^2$ and any integer $p \geq 1$ 
\begin{equation}\label{MomLiou}
\E  \left [      \left ( Z_{(\alpha_i,z_i)_{1 \leq i \leq n}}   (B) \right )^p    \right  ]  = \frac{ \int_B \cdots \int_B  \langle  \prod_{1 \leq j \leq p} e^{\gamma X(x_j)}     \prod_{1 \leq i \leq n}   e^{\alpha_i X(z_i)}   \rangle      \lambda_g( d x_1)  \cdots \lambda_g( d x_p) }{ \langle       \prod_{1 \leq i \leq n}   e^{\gamma X(z_i)}   \rangle  }.
\end{equation}   
Hence, it is an essential program to compute these correlation functions since they determine for instance the moments of the measures $Z_{(\alpha_i,z_i)_{1 \leq i \leq n}}$.

\vspace{0.2 cm}

In the case of the sphere, the work of Duplantier-Miller-Sheffield \cite{DMS} constructed the so-called unit area quantum sphere we will denote $\mu_{DMS,\gamma}$ and which depends on $\gamma \in (0,2)$. In this setting, there is no cosmological constant and therefore no correlation functions strictly speaking. The unit area quantum sphere is in fact an equivalence class of random distributions with two marked points $0$ and $\infty$ (to be precise, one can also construct an equivalence class with one marked point but we will not discuss this case). More specifically, the authors first define an equivalence class of random surfaces with the following definition: a random distribution $h_1$ is equivalent to $h_2$ if and only if there is a (possibly random) M\"obius transform $\phi$ that fixes $0$ and $\infty$ such that the following holds in distribution
\begin{equation}\label{relationquantumsphere}
h_2=h_1 \circ \phi + Q \ln |\phi'|. \footnote{The definition of the unit area quantum sphere is in fact a bit more general as one can consider other marked points than $0$ and $\infty$ and hence more general conformal maps (which do not necessarily fix the points $0$ and $\infty$).}
\end{equation}    
The unit area quantum sphere $\mu_{DMS,\gamma}$ is then defined by the equivalence class of a special distribution $h^{\star}$ where the radial part is sampled according to a special Bessel bridge and the non radial part is sampled according to the non radial part of a standard full plane GFF (to be precise one works in the cylinder $\R \times [0,2 \pi]$ which is conformally equivalent to $\C$): see \cite{DMS} for the exact definition of $h^{\star}$. Let us mention that $h^{\star}$ is such that the radial part of $h^{\star}$ attains its maximum on the circle of radius $1$ (in the cylinder coordinates). In this setting, all the distributions $h$ in  $\mu_{DMS,\gamma}$ define random measures by the relation  
\begin{equation}\label{defmeasurequantumsphere}
\mu_{h}= \lim_{\epsilon\to 0} \epsilon^{\gamma^2/2} e^{\gamma h_\epsilon(z)} dz  
\end{equation} 
where $h_\epsilon(z)$ is a circle average of $h$ with center $z$ and radius $\epsilon$. The measures are related by 
\begin{equation}\label{relationquantumspherebis}
\mu_{h_2} = \mu_{h_1} \circ \phi
\end{equation} 
if $h_1$ and $h_2$ are related by \eqref{relationquantumsphere}. In what follows, we will identify the unit area quantum sphere with the equivalence class of the measure $\mu_{h^{\star}}$ with respect to relation \eqref{relationquantumspherebis} (rather than the equivalence class of the distribution $h^{\star}$ with respect to \eqref{relationquantumsphere}): hence, we will say that two random measures represent the same quantum surface if they are of the form $\mu_{h_1}$ and $\mu_{h_2}$ with $h_1$ and $h_2$ related by \eqref{relationquantumsphere}. Therefore, in conclusion, with this slightly different definition,  the unit area quantum sphere $\mu_{DMS,\gamma}$ is an equivalence class of measures such that $\mu_{h^{\star}} \in \mu_{DMS,\gamma}$.  

The work \cite{DMS} is interesting because it couples measures in $\mu_{DMS,\gamma}$ to space filling variants of SLE curves: this provides an interesting framework to relate the measures in $\mu_{DMS,\gamma}$ to decorated planar maps. We now describe the relation between the two approaches in the next subsection.

\subsubsection*{Historics on the conjectured scaling limit of finite volume planar maps}

In a previous paper \cite{Sheff}, Sheffield constructed a candidate for the scaling limit of the volume form of infinite volume planar maps (i.e. the non compact case). However, he left open the construction of a candidate for the scaling limit of the volume form of finite volume planar maps (i.e. the compact case). In particular, in the case of the sphere, following the work \cite{Sheff}, it was clearly not expected among probabilists that there could be a rather explicit candidate to the following question:


\vspace{0.2 cm}

{\bf Question}:  if you fix three points $z_1,z_2,z_3$ in the sphere $\S^2$, what is the scaling limit of the volume form of large finite planar maps (equipped with a natural conformal structure) embedded in the sphere where you send three points chosen at random on the map to the three fixed points $z_1,z_2,z_3$? \footnote{As stated, this question is not quite precise  because one has to give a definition of what we mean by "natural conformal structure": we refer to the Les Houches notes \cite{Houches} for a complete and precise exposition of the above question.}

\vspace{0.2 cm}
Such an explicit candidate was in fact constructed in \cite{DKRV}: it is the measure we denote $Z_{(\gamma,0), (\gamma,1), (\gamma,\infty)}^1$ (in the case $z_1=0, z_2=1, z_3=\infty$). More precisely, after the work \cite{Sheff}, the independent works \cite{DKRV} and \cite{DMS} appeared simultaneously: both works provide a description of the conjectured scaling limit of large planar maps embedded in the sphere (in \cite{DMS}, the authors also consider the situation of the disk and the relation of these disk measures to the ones considered in this paper is expected to be similar to the relation between the measures on the sphere considered in \cite{DKRV} and \cite{DMS}), namely the measure $Z_{(\gamma,0), (\gamma,1), (\gamma,\infty)}^1$ in \cite{DKRV}  and the equivalence class of the measure $\mu_{h^{\star}}$ in \cite{DMS}, i.e. the unit area quantum sphere $\mu_{DMS,\gamma}$.  

The main result of the recent work by Aru-Huang-Sun \cite{AHS} is to link both approaches: more precisely, they show that $\mu_{h^{\star}}$ and $Z_{(\gamma,0), (\gamma,1), (\gamma,\infty)}$ are equal seen as quantum surfaces with two marked points $0$ and $\infty$. Since the two measures define the same quantum surface, one can relate both measures by a relation of the form \eqref{relationquantumspherebis}. 

On the one hand, one can perform the following procedure: choose a point $z$ at random according to the measure $\mu_{h^{\star}}$ and consider the image of this measure by the conformal map $\phi_z$ which sends $z$ to $1$ and fixes $0$ and $\infty$. Of course, in this setting, we have $\phi_z(x)= \frac{x}{z}$; this procedure defines a random measure we will denote $\mu_{DMS,\gamma}^{3}$. In more mathematical terms, the construction of $\mu_{DMS,\gamma}^{3}$ is: 
\begin{equation}\label{quantumsphere}
\mu_{DMS,\gamma}^{3}= \mu_{h^{\star}} \circ \phi_z^{-1}, \; \; z \sim \mu_{h^{\star}}
\end{equation}
where $z \sim \mu_{h^{\star}}$ means you sample $z$ along $\mu_{h^{\star}}$. The work \cite{AHS} establishes that $\mu_{DMS,\gamma}^{3}=Z_{(\gamma,0), (\gamma,1), (\gamma,\infty)}^1$ in distribution. The equality in distribution between these two measures is in fact non trivial to prove because the work \cite{DKRV} provides an explicit and tractable formula for the distribution $Z_{(\gamma,0), (\gamma,1), (\gamma,\infty)}^1$ (see expression (3.15) and (3.16) in Les Houches notes \cite{Houches}) whereas $\mu_{DMS,\gamma}^{3}$ is defined by a non explicit procedure (this is because in definition \eqref{quantumsphere} the point $z$ is a random variable correlated to $\mu_{h^{\star}}$).

On the other hand, one can show that $Z_{(\gamma,0), (\gamma,1), (\gamma,\infty)}^1$ can be defined as a limit of the form $\lim\limits_{\epsilon\to 0} \epsilon^{\gamma^2/2} e^{\gamma \tilde{h}_\epsilon(z)} dz$ where $\tilde{h}$ is a random distribution. Then one can consider the (random)  M\"obius transform $\phi$ which fixes $0$ and $\infty$ such that the radial part of $\tilde{h} \circ \phi + Q \ln |\phi'|$ (mapped to the cylinder $\R \times [0,2 \pi]$) has its maximum on the circle of radius $1$. We then have in distribution
\begin{equation}\label{quantumspherebis}
\mu_{h^{\star}}= \mu_{\tilde{h}} \circ \phi.
\end{equation}



\subsubsection*{Other topologies} 

Finally, let us mention that both approaches can be extended to other topologies than the sphere and the disk. As a matter of fact, the approach \cite{DMS} in the case of the disk and sphere is an extension to the compact case of the initial approach in \cite{Sheff} where was developped the theory for the half plane and the full plane. However, the approach \cite{DMS} has not been extended to the case of higher genus surfaces. The approach of \cite{DKRV} can be extended to compact Riemann surfaces of genus ${\bf g} \geq 1$: see \cite{DRV} and \cite{GRV}. However, since the approach of \cite{DKRV} is based on first defining correlation functions it seems unadapted to the non compact setting where such correlation functions do not necessarily exist.

\subsection*{Acknowledgements}
The authors would like to thank F. David and A. Kupiainen for interesting discussions on this topic. The authors would also like to thank the anonymous referees for making comments which hopefully enabled to improve the readability of the present paper. In particular, the question raised by one of the referee motivated our writing of Section~\ref{momentestimates} which establishes the unit volume Seiberg bound.

\section{Background and preliminary results}
In order to facilitate the reading of the manuscript, we gather in this section the basics in Riemannian geometry and probability theory that we will use throughout the paper. 

\subsection{Metrics on the unit disk}\label{defmetric}
Let us denote by $\D$ the unit disk in the complex plane $\C$ and $\partial \D$ its boundary. We consider the standard Laplace-Beltrami operator $\triangle$, the standard gradient $\partial$ and Lebesgue measure $d\lambda$ on $\D$, as well as the standard Neumann operator $\partial_n$ and Lebesgue measure $d\lambda_\partial$ on $\partial \D$, the operators being defined with respect to the Euclidean metric if no index is given. More generally, we say that a metric  $g=g(x)dx^2$  on the   unit disk is conformally equivalent to the Euclidean metric if $g(x)=e^{u(x)}  $ for some function $u:\bar{\D}:\to\R$ of class $C^1(\D)\cap C^0(\bar{\D})$ such that
\begin{equation}\label{cond:phi}
\int_\D|\partial u|^2\,d\lambda<+\infty.
\end{equation}
Notice that we use the same notation $g$ for the metric tensor and the function which defines it but this should not lead to confusions. In that case, the Laplace-Beltrami operator $\triangle_g$ and Neumann operator $\partial_{n_g}$ in the metric $g$ are given by
$$ \triangle_g=g^{-1}\triangle,\quad \text{ and }\quad \partial_{n_g}=g^{-1/2}\partial_{n}.$$
We denote respectively by $R_g$ and $K_g$ the Ricci scalar curvature  and geodesic curvature $K_g$ in the metric $g$.
If $g'=e^{\varphi}g$ is another metric on the unit disk conformally equivalent to the flat metric, we get the following rules for the changes of (geodesic) curvature under such a conformal change of metrics
\begin{align}
R_{g'}=&e^{-\varphi}(R_g-\triangle_g \varphi)& \text{ on }\D,\label{curv}\\
  K_{g'}=&e^{-\varphi/2}(K_g+\partial_{n_g}\varphi/2)& \text{ on }\partial \D.\label{geod}
\end{align}
For instance, when equipped with the Euclidean metric, the unit disk has Ricci scalar curvature $0$ and geodesic curvature $1$ along its boundary. Combining these data with the rules \eqref{curv}+\eqref{geod}, one can recover the explicit expressions of $R_g$ and $K_g$ for any metric $g$ conformally equivalent to the Euclidean metric.
We will also consider the volume form $\lambda_g$ on $\D$, the line element $\lambda_{\partial g}$ on  $\partial \D$, and the gradient $\partial^g$   associated to the metric $g$.

\medskip
Let us further recall the Gauss-Bonnet theorem
\begin{equation}\label{GB}
\int_{\D}R_g\,d\lambda_g+2\int_{\partial\D}K_g \,d\lambda_{\partial g}=4\pi\chi(\D),
\end{equation}
where  $\chi(\D)$ is the Euler characteristics of the disk (that is $\chi(\D)=1$), and the Green-Riemann formula 
\begin{equation}\label{GR}
\int_\D \psi\triangle_g\varphi \,d\lambda_g+\int_\D\partial^g\varphi\cdot\partial^g\psi \,d\lambda_g=\int_{\partial \D}\partial_{n_g}\varphi\psi\,d\lambda_{\partial g}.
\end{equation}

We will denote by $m_{\nu}(f)$ and $m_{\partial \nu}(f)$ the mean value of $f$ respectively in the disk $\D$ or the boundary $\partial \D$ with respect to a measure $\nu$ on $\D$ or $\partial \D$, that is
$$m_\nu(f)=\frac{1}{\nu(\D)}\int_\D f\,d\nu,\quad m_{\partial \nu}(f)=\frac{1}{\nu(\partial\D)}\int_{\partial\D} f\,d\nu.$$
If the measure $\nu$ is the volume form (or the line element on $\partial\D$) of some metric $g$, we will use the notation $m_g(f)$ (or $m_{\partial g}(f)$). When no reference to the metric $g$ is given ($m(f)$ or $m_{\partial}(f)$) this means that we work with the Euclidean metric.

\medskip
The Sobolev space $H^1(\D)$ is defined as the closure of the space of smooth functions on $\bar{\D}$ with respect to the inner product
$$\int_\D( fh +\partial f\cdot\partial h)\,d\lambda.$$
We denote by $H^{-1}(\D)$ its dual.

\medskip
Finally, we introduce the Green function $G$ of the Neumann problem on $\D$
\begin{equation}\label{GreenN}
G(x,y)=\ln \frac{1}{|x-y| |1-x \bar{y}|}.
\end{equation}
It is the unique function satisfying  
\begin{enumerate}
\item $x\mapsto G(x,y)$ is harmonic on $\D\setminus\{y\}$,
\item $x\mapsto G(x,y)+\ln|y-x|$ is harmonic on $\D$ for all $y\in\bar{\D}$,
\item $\partial_nG(x,y)=-1$ for $x\in\partial\D$, $y\in\D$,
\item $G(x,y)=G(y,x)$ for $x,y\in\D$ and $x\not= y$,
\item $m_{\partial}G(x,\cdot)=0$ for all $x\in \D$.
\end{enumerate}
Recall that \eqref{GR} combined with the properties of $G$ implies that for all $f\in C^2(\D)\cap C^1(\bar{\D})$
\begin{equation}\label{relgreen}
-2\pi (f(x)-m_{\partial \D}(f))=\int_{\D}G(x,y)\triangle f(y)\,\lambda(dy)-\int_{\partial\D}G(x,y)  \partial_n f(y)\,\lambda_{\partial}(dy) .
\end{equation}
It is quite important to observe here that $G$ is positive  definite on $\D$.

\subsection{M\"obius transforms of the unit disk}
The M\"obius transforms of the unit disk are given by $\psi(x)= e^{i \alpha}  \frac{x-a}{1-\bar{a}  x}$ with $|a|<1$. Recall that
\begin{equation*}
\psi'(x)= e^{i \alpha}  \frac{1-|a|^2}{(1-\bar{a}x)^2}
\end{equation*}
from which one gets
\begin{equation}\label{eq:mobiusformula}
\psi(y)-\psi(x)=(\psi'(y))^{1/2} (\psi'(x))^{1/2}(y-x), \; \; \; 1-\psi(x)\overline{\psi(y)}=(\psi'(x))^{1/2}(\psi'(y))^{1/2}(1-x\overline{y}).
\end{equation}
The Green function for the Neumann problem defined above thus verifies
\begin{equation}\label{greenpsi}
G(\psi(x),\psi(y))=G(x,y)-\ln|\psi'(x)|-\ln|\psi'(y)|.
\end{equation}

\subsection{Gaussian Free Field with Neumann boundary conditions}
We   consider on $\D$ a Gaussian Free Field (GFF) $X_{\partial \D}$ with Neumann boundary conditions  and vanishing mean along the boundary, namely   $m_{\partial}(X_{\partial \D})=0$ (see \cite{dubedat,She07} for more details about GFF).
This field is a Gaussian centered distribution (in the sense of Schwartz) with covariance kernel given by the Green function of the Neumann problem with vanishing mean along the boundary
\begin{equation}
\E[X_{\partial \D}(x)X_{\partial \D}(y)] =G(x,y).
\end{equation}
It can be shown that this Gaussian random distribution (in the sense of Schwartz) lives almost surely in $H^{-1}(\D)$ (same argument as in \cite{dubedat}).\\
~\\
As a distribution, the field $X_{\partial \D}$ cannot be understood as a fairly defined function. To remedy this problem, we will need to consider some regularizations of this field in order to deal with nice (random) functions. Thus, we introduce the regularized field $X_{\partial \D,\epsilon}$ as follows. For $\epsilon>0$, we let $l_\epsilon(x)$ be the length of the arc $A_\epsilon(x)=\{z\in\D;|z-x|=\epsilon\}$ (computed with the Euclidean line element $ds$ on the boundary of the disk centered at $x$ and radius $\epsilon$). Then we set
\begin{equation*}
X_{\partial \D,\epsilon}(x)= \frac{1}{l_\epsilon(x)}  \int_{A_\epsilon(x)}  X_{\partial \D}(s) ds.
\end{equation*}
A similar regularization was considered in \cite{cf:DuSh} and the reader can check that this field has a locally H\"older version both in the variables $x$ and $\epsilon$.  
Let us mention that we have the following two options: either $x\in\D$ and then for $\epsilon<{\rm dist}(x,\partial\D)$ we obtain
\begin{equation*}
X_{\partial \D,\epsilon}(x)=\frac{1}{2 \pi  }  \int_{0}^{2\pi}  X_{\partial \D}(x+\epsilon e^{i\theta}) d\theta,
\end{equation*}
or $x\in\partial\D$ and  then $X_{\partial \D,\epsilon}(x)$ is intuitively the same as above except that we integrate along the ``half-circle'' centered at $x$ with radius $\epsilon$ contained in $\D$.

\begin{proposition}\label{circlegreen}
Let us denote by $g_P$ the Poincar\'e metric over the unit disk
\begin{equation}\label{poincare}
g_P=\frac{1}{(1-|x|^2)^2}dx^2.
\end{equation}
We claim\\
1) As $\epsilon\to 0$, the convergence $\E[X_{\partial \D,\epsilon} (x)^2]+\ln \epsilon\to  \frac{1}{2}\ln g_P(x) $ holds uniformly over the compact subsets of $\D$.\\
2) As $\epsilon\to 0$,  the convergence $\E[X_{\partial \D,\epsilon} (x)^2]+2\ln \epsilon\to  -1 $ holds uniformly over  $\partial \D$.\\
3) Consider a M\"obius transform $\psi$ of the disk. Denote by $X_{\partial \D}\circ \psi_\epsilon$ the $\epsilon$-circle average of the field $X_{\partial \D}\circ \psi$. Then as $\epsilon\to 0$, we have the convergence $$\E[X_{\partial \D}\circ \psi_\epsilon(x)^2]+\ln \epsilon\to \frac{1}{2}\ln g_P(\psi(x))-2\ln|\psi'(x)|$$ uniformly over the compact subsets of $\D$ and the convergence $$\E[X_{\partial \D}\circ \psi_\epsilon(x)^2]+2\ln \epsilon\to -1-2\ln|\psi'(x)|$$ uniformly over   $\partial\D$.
 
\end{proposition}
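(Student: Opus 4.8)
The plan is to read off the variance of each Gaussian circle average directly from its covariance. In every case $X_{\partial \D,\epsilon}(x)$ (or its push-forward under $\psi$) is a centered Gaussian, so its variance is the double integral of the covariance kernel over the averaging curve. For an interior point this reads
$$\E[X_{\partial \D,\epsilon}(x)^2]=\frac{1}{(2\pi)^2}\int_0^{2\pi}\int_0^{2\pi}G(x+\epsilon e^{i\theta},x+\epsilon e^{i\theta'})\,d\theta\,d\theta',$$
and for a boundary point the full circle is replaced by the arc $A_\epsilon(x)$ and $(2\pi)^{-2}$ by $\l_\epsilon(x)^{-2}$. The structural input is the explicit splitting $G(u,v)=-\ln|u-v|-\ln|1-u\bar v|$ from \eqref{GreenN}: the first summand carries the short-distance logarithmic singularity, while the second (``image'') term enforces the Neumann condition and controls the boundary behaviour.

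For part 1, I would first treat the singular term. Writing $|u-v|=\epsilon|e^{i\theta}-e^{i\theta'}|$ and invoking the classical identity $\frac{1}{2\pi}\int_0^{2\pi}\ln|e^{i\theta}-e^{i\theta'}|\,d\theta'=0$ (equivalently $\int_0^{\pi/2}\ln\sin=-\tfrac{\pi}{2}\ln 2$), the average of $-\ln|u-v|$ is exactly $-\ln\epsilon$. For interior $x$ the image term $-\ln|1-u\bar v|$ is smooth near $(x,x)$ because $1-|x|^2>0$, so its circle average converges to its value at the center, $-\ln(1-|x|^2)=\tfrac12\ln g_P(x)$. Summing gives $\E[X_{\partial \D,\epsilon}(x)^2]+\ln\epsilon\to\tfrac12\ln g_P(x)$, and uniformity on compacts of $\D$ follows from uniform continuity of $(x,y)\mapsto\ln|1-x\bar y|$ away from $\partial\D$ together with an equicontinuity bound in $\epsilon$.

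Part 2 is the delicate case and the main obstacle. For $x\in\partial\D$ the arc $A_\epsilon(x)$ is asymptotically a half-circle, and the decisive new phenomenon is that the image term degenerates: when $u,v$ approach a boundary point, $1-u\bar v$ itself vanishes to first order in $\epsilon$, so $-\ln|1-u\bar v|$ now supplies a \emph{second} factor $-\ln\epsilon$ — this is the origin of the coefficient $2$. Concretely, expanding at $x=e^{i\beta}$ in the angular variables $\phi,\phi'\in[\pi/2,3\pi/2]$ cut out by $u,v\in\D$, one finds $|u-v|=\epsilon|e^{i\phi}-e^{i\phi'}|$ and $|1-u\bar v|=\epsilon\,|e^{i\phi}+e^{-i\phi'}|+O(\epsilon^2)$, so that the constant reduces to the explicit double integral of $\ln|e^{i\phi}-e^{i\phi'}|$ and $\ln|e^{i\phi}+e^{-i\phi'}|$ over $[\pi/2,3\pi/2]^2$, which is evaluable in closed form by the log-sine integral above. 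The hard part is justifying that this naive limit is the right constant: the endpoints of $A_\epsilon(x)$ lie exactly on $\partial\D$, where $G$ is most singular (precisely the boundary non-integrability flagged in the introduction). I must therefore show that (i) the corner regions where $|e^{i\phi}+e^{-i\phi'}|\lesssim\epsilon$ contribute only $O(\epsilon^2\ln\epsilon)$, and (ii) the genuine $O(\epsilon)$ corrections coming from the true arc length $\l_\epsilon(x)$ and from the curvature of $\partial\D$ do not affect the limit. Once these are controlled, uniformity in $x\in\partial\D$ is immediate since the leading geometry is rotation-invariant.

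For part 3 the Möbius covariance rule does essentially all the work. Since $X_{\partial \D}\circ\psi$ has covariance $G(\psi(u),\psi(v))=G(u,v)-\ln|\psi'(u)|-\ln|\psi'(v)|$ by \eqref{greenpsi} (itself a consequence of \eqref{eq:mobiusformula}), its circle or half-circle average variance equals $\E[X_{\partial \D,\epsilon}(x)^2]$ minus twice the corresponding average of the function $\ln|\psi'|$. As $\ln|\psi'|=\Re\ln\psi'$ is harmonic and continuous up to $\partial\D$, that average converges to $\ln|\psi'(x)|$, producing the extra term $-2\ln|\psi'(x)|$ in both the interior and the boundary statements. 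Combining with parts 1 and 2 yields the limit $\tfrac12\ln g_P(x)-2\ln|\psi'(x)|$ in the interior, which the Möbius invariance $\tfrac12\ln g_P(x)=\tfrac12\ln g_P(\psi(x))+\ln|\psi'(x)|$ recasts in terms of $g_P(\psi(x))$, and $-1-2\ln|\psi'(x)|$ on the boundary. I expect no new difficulty here beyond inheriting the uniformity already established, the only point of care being that for boundary $x$ one uses the half-arc average of $\ln|\psi'|$, whose limit is again $\ln|\psi'(x)|$ by continuity.
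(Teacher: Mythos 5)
Your proposal is correct and follows essentially the same route as the paper: split the Neumann Green function \eqref{GreenN} into its logarithmic part, whose circle (resp.\ half-circle) average produces $-\ln\epsilon$ via the vanishing of $\int_0^{2\pi}\int_0^{2\pi}\ln|e^{i\theta}-e^{i\theta'}|\,d\theta\,d\theta'$ (resp.\ the non-vanishing half-circle integral giving the constant), and its image part, which is smooth in the interior and contributes the second $-\ln\epsilon$ on the boundary, with part 3 obtained from \eqref{greenpsi}. One small remark: carrying your part-3 computation through, the interior limit is $\tfrac12\ln g_P(x)-2\ln|\psi'(x)|=\tfrac12\ln g_P(\psi(x))-\ln|\psi'(x)|$ (using $1-|\psi(x)|^2=|\psi'(x)|(1-|x|^2)$), which is what the direct application of \eqref{greenpsi} gives in the paper's proof as well.
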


\noindent {\it Proof.} To prove the first statement results, apply the $\epsilon$-circle average regularization to the Green function $G $ in \eqref{GreenN} and use the fact that the following integral vanishes $$\int_0^{2\pi}\int_0^{2\pi}\ln\frac{1}{|e^{i\theta}-e^{i\theta'}|}\,d\theta d\theta'=0$$ to get  the uniform convergence over compact subsets of $\E[X_{\partial,\epsilon} (x)^2]+\ln \epsilon$   towards $$x\mapsto\frac{1}{2}\ln g_P(x).$$
The strategy is similar for the second statement except that one gets $\pi^{-2}$ times the integral $\int_0^{\pi}\int_0^{\pi} \ln\frac{1}{|e^{i\theta}-e^{i\theta'}|}\,d\theta d\theta'$, which does not vanish anymore and yields the constant $-1$. 
The third claim results from \eqref{greenpsi}.\qed

\subsection{Gaussian Multiplicative Chaos}

Gaussian multiplicative chaos theory was introduced in \cite{cf:Kah}. The reader is referred to  \cite{review} for a review on the topic.  Here, we deal with convolution of the GFF so that as a straightforward combination of the main result in  \cite{shamov} and Proposition \ref{law}, we claim

\begin{proposition}\label{law}
For $\gamma\in[0,2[$ and $\lambda$, $\lambda_\partial$  the volume form and line element on $\D$, $\partial \D$ of the Euclidean metric, the random measures $e^{\gamma X_{\partial\D}}d\lambda$, $e^{\frac{\gamma}{2} X_{\partial\D}}d\lambda_\partial$ are defined as the limits in probability
\begin{equation*}
e^{\gamma X_{\partial\D}}d\lambda=\lim\limits_{\epsilon\to 0}\epsilon^{\frac{\gamma^2}{2}}e^{\gamma X_{\partial\D,\epsilon}}d\lambda\quad \quad e^{\frac{\gamma }{2}X_{\partial\D}}d\lambda_{\partial \D}=\lim\limits_{\epsilon\to 0}\epsilon^{\frac{\gamma^2}{4}}e^{\frac{\gamma}{2} X_{\partial\D,\epsilon}}d\lambda_\partial
\end{equation*}
in the sense of weak convergence of measures over $\D$, $\partial \D$. These limiting measures are non trivial and are two standard Gaussian Multiplicative Chaos (GMC) on $\D$, $\partial \D$, namely
\begin{equation*}
e^{\gamma X_{\partial\D}}d\lambda= e^{\gamma X_{\partial\D}(x)-\frac{\gamma^2}{2}\E[X_{\partial\D}(x)^2]}g_P(x)^{\frac{\gamma^2}{4}}\lambda(dx)\quad \quad e^{\frac{\gamma }{2}X_{\partial\D}}d\lambda_{\partial \D}=e^{-\frac{\gamma^2}{8}}e^{\frac{\gamma}{2}X_{\partial\D}(x)-\frac{\gamma^2}{8}\E[X_{\partial\D}(x)^2]}\lambda_\partial(dx).
\end{equation*}
\end{proposition}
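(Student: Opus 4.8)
The plan is to invoke the universality of Gaussian multiplicative chaos proved in \cite{shamov} and to reduce the whole statement to the variance asymptotics already recorded in Proposition \ref{circlegreen}. First I would record the structure of the covariance kernel. Since $G(x,y)=-\ln|x-y|-\ln|1-x\bar y|$ and $G$ is positive definite on $\D$, on every compact subset of $\D$ the field $X_{\partial\D}$ is a log-correlated field of the form $-\ln|x-y|$ plus a continuous (bounded) covariance, hence exactly of the type covered by the standard GMC theory of \cite{cf:Kah,review}. On $\partial\D$, writing $x=e^{i\alpha}$, $y=e^{i\beta}$ one has $|x-y|=|1-x\bar y|=2|\sin\tfrac{\alpha-\beta}{2}|$, so the two logarithmic terms add and the trace kernel behaves like $-2\ln|x-y|$. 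This doubling on the boundary is precisely the reason the boundary exponential carries the exponent $\tfrac{\gamma}{2}$ rather than $\gamma$.

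Next I would observe that the circle (resp.\ half-circle) average $X_{\partial\D,\epsilon}$ is a convolution approximation of $X_{\partial\D}$: its off-diagonal covariance converges to $G(x,y)$ for $x\neq y$, and its variance is controlled exactly by Proposition \ref{circlegreen}. These are the hypotheses under which the main theorem of \cite{shamov} guarantees that the Wick-renormalized measures
\[
e^{\gamma X_{\partial\D,\epsilon}(x)-\frac{\gamma^2}{2}\E[X_{\partial\D,\epsilon}(x)^2]}\,d\lambda,\qquad e^{\frac{\gamma}{2}X_{\partial\D,\epsilon}(x)-\frac{\gamma^2}{8}\E[X_{\partial\D,\epsilon}(x)^2]}\,d\lambda_\partial
\]
converge in probability, in the sense of weak convergence of measures, towards two nontrivial Gaussian multiplicative chaos measures, the limit being independent of the chosen regularization. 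Nontriviality holds because we are in the subcritical regime: in the bulk ($d=2$) the condition is $\gamma^2<4$, while on $\partial\D$ ($d=1$) the doubled kernel combined with the exponent $\tfrac{\gamma}{2}$ produces the same effective threshold $\gamma^2<4$, i.e.\ $\gamma\in[0,2[$ in both cases.

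It then remains to reconcile the two normalizations, which is where Proposition \ref{circlegreen} enters directly. In the bulk I would split
\[
\epsilon^{\frac{\gamma^2}{2}}e^{\gamma X_{\partial\D,\epsilon}}=e^{\gamma X_{\partial\D,\epsilon}-\frac{\gamma^2}{2}\E[X_{\partial\D,\epsilon}^2]}\cdot e^{\frac{\gamma^2}{2}\left(\E[X_{\partial\D,\epsilon}^2]+\ln\epsilon\right)},
\]
and by part~1) of Proposition \ref{circlegreen} the deterministic prefactor converges to $g_P(x)^{\gamma^2/4}$ uniformly on compact subsets of $\D$. Likewise, from
\[
\epsilon^{\frac{\gamma^2}{4}}e^{\frac{\gamma}{2}X_{\partial\D,\epsilon}}=e^{\frac{\gamma}{2}X_{\partial\D,\epsilon}-\frac{\gamma^2}{8}\E[X_{\partial\D,\epsilon}^2]}\cdot e^{\frac{\gamma^2}{8}\left(\E[X_{\partial\D,\epsilon}^2]+2\ln\epsilon\right)}
\]
together with part~2), the boundary prefactor converges uniformly on $\partial\D$ to $e^{-\gamma^2/8}$. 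Since these prefactors are continuous and converge uniformly, they can be carried through the weak limit, producing the two explicit expressions claimed in the statement.

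The main obstacle I anticipate lies not in the bulk but in the boundary and, more precisely, in the behaviour near $\partial\D$. The prefactor $g_P(x)^{\gamma^2/4}$ blows up as $|x|\to1$, so the uniform-on-compacts convergence above must be upgraded, via a truncation and tightness argument near the boundary, into genuine weak convergence of measures on all of $\D$; this is exactly the source of the non-integrability phenomena flagged in the introduction. On the boundary itself, the delicate point is to justify that the half-circle average of the Neumann field is a bona fide convolution approximation of the trace of $X_{\partial\D}$ with the doubled $-2\ln|x-y|$ kernel, so that the theorem of \cite{shamov} applies on the one-dimensional manifold $\partial\D$ with the correct subcriticality threshold.
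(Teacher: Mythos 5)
Your proposal matches the paper's (very terse) argument: the paper likewise obtains Proposition \ref{law} as a direct combination of the universality theorem of \cite{shamov} with the variance asymptotics of Proposition \ref{circlegreen}, the deterministic prefactors $g_P(x)^{\gamma^2/4}$ and $e^{-\gamma^2/8}$ arising exactly from the splittings you write down. The boundary obstruction you correctly flag --- that $g_P(x)^{\gamma^2/4}$ is non-integrable near $\partial\D$ for $\gamma^2\geq 2$, so local convergence must be supplemented by an a.s.\ finiteness argument for the total bulk mass --- is precisely what the paper isolates and proves separately as Proposition \ref{finitemeas}, via small moments, Kahane's convexity inequality and comparison with multiplicative cascades.
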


\noindent Actually, the main issue is to show that these measures give almost surely finite mass respectively to the disk and its boundary. This turns out to be obvious for the boundary measure as the expectation of the total mass of $\partial\D$ is finite. Concerning the bulk  measure, this statement is not straightforward: observe for instance that the expectation is infinite
$$\E[\int_\D e^{\gamma X_{\partial\D}}d\lambda]=\int_\D g_P(x)^{\frac{\gamma^2}{4}}\lambda(dx)$$
as soon as $\gamma^2\geq 2$. Yet, we show in the following proposition that the random variable $\int_\D e^{\gamma X_{\partial\D}}d\lambda$ is almost surely finite for all values of $\gamma\in ]0,2[$.  
\begin{proposition}\label{finitemeas}
For $\gamma\in]0,2[$, the quantities below are almost surely finite
\begin{equation*}
\int_\D e^{\gamma X_{\partial\D}}d\lambda \quad \text{ and }\quad \int_{\partial\D }e^{\frac{\gamma }{2}X_{\partial\D}}d\lambda_{\partial \D}.
\end{equation*}
\end{proposition}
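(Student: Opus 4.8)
The plan is to treat the boundary term and the interior of the bulk term by a direct first moment computation, and to concentrate all the work on the behaviour of the bulk measure as one approaches $\partial\D$, where the first moment diverges. For the boundary term, Proposition \ref{circlegreen}(2) gives $\E[X_{\partial\D,\epsilon}(x)^2]+2\ln\epsilon\to -1$ uniformly on $\partial\D$, so the expected density of $\int_{\partial\D}e^{\frac\gamma2 X_{\partial\D}}d\lambda_\partial$ is the constant $e^{-\gamma^2/8}$ and $\E[\int_{\partial\D}e^{\frac\gamma2 X_{\partial\D}}d\lambda_\partial]=2\pi e^{-\gamma^2/8}<\infty$; finiteness of the first moment gives almost sure finiteness. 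Likewise, on any compact $K\subset\D$ the density $g_P^{\gamma^2/4}$ is bounded, so $\E[\int_K e^{\gamma X_{\partial\D}}d\lambda]<\infty$ and the bulk measure is almost surely locally finite in $\D$. The only mechanism that can produce an infinite total mass is therefore the accumulation of mass near $\partial\D$, where $\E[\int_\D e^{\gamma X_{\partial\D}}d\lambda]=\int_\D g_P(x)^{\gamma^2/4}d\lambda$ diverges as soon as $\gamma^2\geq 2$.

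To control this accumulation I would use fractional moments on a dyadic decomposition of a boundary collar. Write the collar as $\bigcup_{n\geq 0}A_n$ with $A_n=\{x:2^{-n-1}\leq 1-|x|<2^{-n}\}$, and write $M$ for the bulk measure. Since $0<p<1$ makes $t\mapsto t^p$ subadditive, it suffices to bound $\sum_{n\geq0}\E[M(A_n)^p]$ and to conclude that $M$ of the collar, hence $M(\D)$, is almost surely finite. Subdividing $A_n$ into its $\asymp 2^n$ dyadic squares $Q$ of side $r=2^{-n}$ (each lying at distance $\asymp 2^{-n}$ from $\partial\D$) and using subadditivity once more reduces everything to a single estimate of $\E[M(Q)^p]$ for such a square.

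For the per-square estimate I would exploit the splitting of the Neumann Green function $G(x,y)=-\ln|x-y|-\ln|1-x\bar y|$ into a genuinely log-singular part and the reflection part. For $x,y$ in one square $Q\subset A_n$ one has $-\ln|1-x\bar y|=v_n+O(1)$ with $v_n\asymp n\ln2$, so, up to a bounded perturbation, the restriction of $X_{\partial\D}$ to $Q$ is an exactly log-correlated field plus an independent common Gaussian mode $\sqrt{v_n}\,\mathcal N$ of variance $v_n$, with $\mathcal N\sim\mathcal N(0,1)$. Using the positive definiteness of $G$ (recorded after \eqref{relgreen}) together with Kahane's convexity inequality to make this comparison rigorous, one bounds
\begin{equation*}
\E[M(Q)^p]\;\lesssim\;\E\big[e^{p\gamma\sqrt{v_n}\mathcal N}\big]\,\E[N_Z(Q)^p],
\end{equation*}
where $N_Z$ is a standard planar GMC built from a bounded perturbation of the logarithmic kernel. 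Here $\E[e^{p\gamma\sqrt{v_n}\mathcal N}]=e^{p^2\gamma^2 v_n/2}\asymp 2^{np^2\gamma^2/2}$, while the standard GMC scaling gives $\E[N_Z(Q)^p]\lesssim r^{\xi(p)}=2^{-n\xi(p)}$ with $\xi(p)=(2+\tfrac{\gamma^2}{2})p-\tfrac{\gamma^2}{2}p^2$, valid for $p<4/\gamma^2$. Multiplying and summing over the $\asymp 2^n$ squares of $A_n$ yields
\begin{equation*}
\E[M(A_n)^p]\;\lesssim\; 2^{\,n\,\big(1+\gamma^2p^2-(2+\frac{\gamma^2}{2})p\big)}.
\end{equation*}
The exponent $g(p)=1+\gamma^2p^2-(2+\tfrac{\gamma^2}{2})p$ has discriminant $(2-\tfrac{\gamma^2}{2})^2$, hence roots $p=\tfrac12$ and $p=\tfrac{2}{\gamma^2}$, so $g(p)<0$ precisely on $(\tfrac12,\tfrac{2}{\gamma^2})$. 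For every $\gamma\in]0,2[$ this interval meets $(0,1)\cap(0,4/\gamma^2)$: one may take any $p\in(\tfrac12,1)$ when $\gamma^2\leq2$ and any $p\in(\tfrac12,\tfrac{2}{\gamma^2})$ when $2<\gamma^2<4$. With such a $p$ the geometric series $\sum_n\E[M(A_n)^p]$ converges, so the collar has finite $M$-mass almost surely, and combined with local finiteness in the interior this gives $M(\D)<\infty$ almost surely.

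I expect the main obstacle to be the per-square moment bound: one must check that the reflection term can indeed be absorbed into a single common Gaussian mode uniformly across each dyadic square, that the residual kernel stays positive definite so that Kahane's inequality applies, and that the standard GMC scaling estimate for $\E[N_Z(Q)^p]$ holds down to the relevant small values of $p$ with constants uniform in $n$. Once these points are secured, the arithmetic of the exponent $g(p)$ closes the argument for the full range $\gamma\in]0,2[$.
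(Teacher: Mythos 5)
Your argument is correct, but it takes a genuinely different route from the paper's. Both proofs isolate the boundary collar, decompose it dyadically, and combine subadditivity of $t\mapsto t^p$ with Kahane's convexity inequality; the difference is the comparison field. The paper collapses each annulus onto the circle and compares $X_{\partial\D}$ there with $\sqrt2\,Y_{2^{-n}}$, a \emph{one-dimensional} log-correlated field at scale $2^{-n}$ with doubled intensity; since $\gamma\sqrt2$ exceeds the one-dimensional critical value as soon as $\gamma>1$, this forces the use of the frozen-phase moment estimates of Madaule for near-critical cascades (the normalization $n^{3\gamma/2}2^{n(\gamma-1)^2}$, with the restriction $\alpha<\gamma^{-1}$), and yields the exponent $-\tfrac{\alpha}{2}(\gamma-2)^2$ together with a polynomial factor $n^{-3\gamma\alpha/2}$. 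You instead stay two-dimensional: on each dyadic square you peel off the reflection part $-\ln|1-x\bar y|=v_n+O(1)$ of the Neumann Green function as a common lognormal mode and compare the remainder with a subcritical planar GMC, invoking only the standard multifractal scaling $\E[N_Z(Q)^p]\lesssim r^{\xi(p)}$. This is more elementary and self-contained (no supercritical cascade input), at the price of a constrained moment window $p\in(\tfrac12,\tfrac{2}{\gamma^2})$ rather than arbitrarily small moments. The direction of Kahane's inequality does work out as you need: for the concave function $t\mapsto t^p$ an upper bound requires a comparison field with \emph{smaller} covariance, so you should take the kernel $\ln\tfrac{1}{|x-y|}+v_n-C\leq G(x,y)$ on $Q\times Q$, which is positive definite since $Q$ has diameter $<1$ and $v_n-C\geq 0$ for large $n$. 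One instructive limitation: at $\gamma=2$ your exponent degenerates to $g(p)=(2p-1)^2\geq 0$ and the geometric series no longer converges, whereas the paper's polynomial correction $n^{-3\gamma\alpha/2}$ survives at criticality (this is essentially how Proposition \ref{finitemeascrit} is proved); since the present statement assumes $\gamma<2$ strictly, this does not affect your proof.
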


\begin{proof} As explained above, we only need to focus on the bulk measure. Observe first that its expectation is finite in the case $\gamma^2<2$. For $\gamma^2\geq 2$ (in fact the argument below works for $\gamma>1$), we prove that it has moments of small order $\alpha>0$, which entails the a.s. finiteness of the total mass of the interior of the disk.\\
Recall the sub-additivity inequality for $\alpha\in]0,1[$: if $(a_j)_{1\leq j\leq n}$ are positive real numbers then
$$(a_1+\dots+a_n)^\alpha\leq a_1^\alpha+\dots+a_n^\alpha.$$
Therefore we can write
\begin{align*}
 \mathds{E}\Big[\Big(\int_\mathds{D}&e^{\gamma X_{\partial\D}(x)-\frac{\gamma^2}{2}  \E[X_{\partial\D}(x)^2] } \frac{1}{(1-|x|^2)^{\gamma^2/2}})\lambda(dx)\Big)^\alpha\Big]\\
=&\mathds{E}\Big[\Big(\sum\limits_{n\in\mathds{N}}\int_{1-2^{-n} \leq |x|^2 \leq 1-2^{-n-1} }e^{\gamma X_{\partial\D}(x)-\frac{\gamma^2}{2}  \E[X_{\partial\D}(x)^2] } \frac{1}{(1-|x|^2)^{\gamma^2/2}}\lambda(dx)\Big)^\alpha\Big]\\
\leq& \sum\limits_{n\in\mathds{N}}2^{n\alpha \frac{\gamma^2}{2}}\mathds{E}\Big[ \Big(\int_{1-2^{-n} \leq |x|^2 \leq 1-2^{-n-1} } e^{\gamma X_{\partial\D}(x)-\frac{\gamma^2}{2}  \E[X_{\partial\D}(x)^2] }\lambda(dx)\Big)^\alpha\Big].
\end{align*}
\noindent Now we trade the GFF $X_{\partial\D}$ for a log-correlated field that possesses a nicer structure of correlations with the help of Kahane's convexity inequality \cite{cf:Kah}. More precisely, we consider any log-correlated field on $\R^2$ with a white noise decomposition and invariant under rotation. For instance, let us consider a star scale invariant kernel with compact support (see \cite{Rnew1}): we choose a positive definite isotropic positive  function $k$ with compact support of class $C^2$ and we set
$$K_\epsilon(x)=\int_1^{\epsilon^{-1}}\frac{k(ux)}{u}\,du.$$
We consider a family of Gaussian processes $(Y_\epsilon(x))_\epsilon$ such that (see \cite{Rnew1} for  the details of the construction of such fields)
$$\forall x,y\in\R^2,\quad \E[Y_\epsilon(x)Y_{\epsilon'}(y)]=K_{\max(\epsilon,\epsilon')}(x-y).$$
The reader may check that for all $r,r'\in ]0,1]$ such that $ 1-2^{-n} \leq r^2,r^{'2} \leq 1-2^{-n-1} $  and $\theta,\theta'\in[0,2\pi]$
$$\E[X_{\partial\D}(re^{i\theta})X_{\partial\D}(r'e^{i\theta'})]\geq 2 \E[Y_{2^{-n}}(e^{i\theta})Y_{2^{-n}}(e^{i\theta'})]-A$$ for some constant $A$ independent of $n,\theta$.
This inequality of covariances allows us to use Kahane's convexity inequality (see \cite{cf:Kah} or \cite[Theorem 2.1]{review}). Indeed,  because the map $x\mapsto x^\alpha$ is concave, we have for some standard Gaussian random variable $N$ independent of everything 
\begin{align*}
 \mathds{E}\Big[ \Big(e^{\gamma A^{1/2 }N-A\gamma^2/2}\int_{1-2^{-n} \leq |x|^2 \leq 1-2^{-n-1}} &e^{\gamma X_{\partial\D}-\frac{\gamma^2}{2}  \E[X_{\partial\D}^2] }d\lambda\Big)^\alpha\Big]\\
 &\leq  \mathds{E}\Big[ \Big(\int_0^{2\pi}\int_{(1-2^{-n})^{1/2}}^{(1-2^{-n-1})^{1/2}}  e^{\gamma \sqrt{2}Y_{2^{-n}}(e^{i\theta})- \gamma^2 \E[Y_{2^{-n}}(e^{i\theta})^2] }drd\theta\Big)^\alpha\Big]\\
  &=  C2^{-n\alpha}\mathds{E}\Big[ \Big(\int_0^{2\pi}   e^{\gamma \sqrt{2}Y_{2^{-n}}(e^{i\theta})- \gamma^2 \E[Y_{2^{-n}}(e^{i\theta})^2] }d\theta\Big)^\alpha\Big]
\end{align*}
for some constant $C$ independent of everything.
By  using the comparison to Mandelbrot's multiplicative cascades  as explained in \cite[Appendix B.1]{Rnew7} to use a moment estimate in \cite[Proposition 2.1 and the remark just after]{madaule}, we have  that for any $\alpha<\gamma^{-1}$ and some other constant $C>0$
$$\sup_{n}\mathds{E}\Big[ \Big(n^{\frac{3\gamma}{2}}2^{n(\gamma-1)^2}\int_0^{2\pi}   e^{\gamma \sqrt{2}Y_{2^{-n}}(e^{i\theta})- \gamma^2 \E[Y_{2^{-n}}(e^{i\theta})^2] } d\theta\Big)^\alpha\Big]\leq C.$$
Combining we get (up to changing the value of $C$ to absorb the constant $\E[e^{\alpha\gamma A^{1/2 }N-\alpha A\gamma^2/2}]$)
\begin{align*}
\mathds{E}\Big[\Big(\int_\mathds{D}&e^{\gamma X_{\partial\D}(x)-\frac{\gamma^2}{2}  \E[X_{\partial\D}(x)^2] } \frac{1}{(1-|x|^2)^{\gamma^2/2}})\lambda(dx)\Big)^\alpha\Big]\leq C \sum\limits_{n\in\mathds{N}}  2^{n\alpha \big(\frac{\gamma^2}{2}-1-(\gamma-1)^2\big)}n^{-\frac{3\gamma}{2}\alpha},
\end{align*}
which is finite (with $\alpha\in(0,\gamma^{-1})$) when $\gamma\in ]1,2[$ because $\frac{\gamma^2}{2}-1-(\gamma-1)^2<0$ when $\gamma\in(0,2)$.
\end{proof}

\section{Liouville Quantum Gravity on the disk}
We are now in a position to give a precise definition of the LQFT on the disk with marked points: $n$ points in the bulk $\D$ and $n'$ points on the boundary $\partial\D$.  In what follows, we will first give a necessary and sufficient condition (the Seiberg bounds) on these marked points in order that the LQFT is well defined.  This will allow us to give the definitions of the Liouville field and measure. Finally, we will explain how these objects behave under conformal changes of background metrics and conformal reparametrization of the domain. Basically, the approach is the same as in \cite{DKRV} but there are some technical differences in order to treat the interactions bulk/boundary.

\subsection{Definition and existence of the partition function}
LQFT on the disk will be defined in terms of three parameters $\gamma$,  $\mu$, $\mu_{\partial}$, respectively the coupling constant and the bulk/boundary cosmological constants, together with prescribed marked points. In this section, we will assume  that the parameters  $\gamma$,  $\mu$, $\mu_{\partial}$ satisfy
\begin{equation}\label{def:param}
\gamma\in ]0,2[,\quad  \mu,\mu_{\partial  }\geq 0\quad \text{ and }\quad \mu+\mu_{\partial  }>0. 
\end{equation}
Concerning the marked points,  we fix a set of $n$ points  $(z_i)_{1\leq i\leq n}$  in the interior of $\D$ together with $n$ weights  $(\alpha_i)_{1\leq i\leq n}\in \R^n$ and  $n'$ points $(s_j)_{1\leq j\leq n'}$ on the boundary   $\partial\D$ together with $n'$ weights  $(\beta_j)_{1\leq j\leq n'}\in \R^{n'}$. The family   $(z_i,\alpha_i)_i$ will be called bulk marked points  and  the family $(s_j,\beta_j)_j$   boundary marked points. 
 
\medskip 
Consider any metric $g=e^{\varphi}dx^2$ on the unit disk conformally equivalent to the Euclidean metric  in the sense of \eqref{cond:phi}.\\
Our purpose is now to define the partition function $\Pi_{ \gamma,\mu_{\partial  },\mu}^{(z_i,\alpha_i)_i,(s_j,\beta_j)_j}  (\epsilon,g,F)$ of LQFT applied to a functional $F$. This partition function formally corresponds to the Feynmann surface integral \eqref{pathintegral} with action \eqref{actionfalse}. Yet, a rigorous approach requires the  regularization procedure. This is the reason why we define the regularized partition function  for all $\epsilon\in]0,1]$ and  bounded continuous functional $F$ on $H^{-1}(\D)$ by
\begin{align}\label{eq:defPigeps}
&\Pi_{ \gamma,\mu_{\partial  },\mu}^{(z_i,\alpha_i)_i,(s_j,\beta_j)_j}  (\epsilon,g,F)\\
 =&e^{  \frac{1}{96\pi}\Big(\int_{\D}|\partial \ln g|^2\,d\lambda+\int_{\partial \D}4\ln g\,d\lambda_{\partial} \Big)}  \int_\R\E\Big[F( X_{\partial  \D}+c+Q/2\ln g)\prod_i \epsilon^{\frac{\alpha_i^2}{2}}e^{\alpha_i (c+X_{\partial\D,\epsilon}+Q/2\ln g)(z_i)} \nonumber\\
&\times\prod_j \epsilon^{\frac{\beta_j^2}{4}}e^{\frac{\beta_j }{2}(c+X_{\partial\D,\epsilon}+Q/2\ln g)(s_j)}\exp\Big( - \frac{Q}{4\pi}\int_{\D}R_{g} (c+X_{\partial  \D})  \,d\lambda_g - \mu e^{\gamma c}\epsilon^{\frac{\gamma^2}{2}}\int_{\D}e^{\gamma (X_{\partial \D,\epsilon}+ Q/2\ln g) }\,d\lambda \Big)\nonumber \\
&\exp\Big( - \frac{Q}{2\pi}\int_{\partial \D}K_{g} (c+X_{\partial\D})  \,d\lambda_{\partial g} - \mu_\partial e^{\frac{\gamma}{2} c}\epsilon^{\frac{\gamma^2}{4}}\int_{\partial\D}e^{\frac{\gamma}{2} (X_{\partial\D,\epsilon}+ Q/2\ln g) }\,d\lambda_{\partial } \Big) \Big]\,dc.\nonumber %
\end{align}
The constant $c$ which is integrated against the Lebesgue measure $dc$ is crucial in the definition and adds extra symmetry. In particular, one has the following equality in distribution for all M\"obius transform $\psi$ when $c$ is distributed according to the Lebesgue measure 
\begin{equation}\label{magicinvariance}
X_{\partial \D} \circ \psi +c  \overset{(Law)}{=}  X_{\partial \D}+c 
\end{equation}
To prove identity \eqref{magicinvariance}, recalll that $X_{\partial \D} \circ \psi -\frac{1}{2 \pi} \int_{\partial \D}  X_{\partial \D} \circ \psi \:  d \lambda_{\partial} \overset{(Law)}{=}  X_{\partial \D}$ and then use that the Lebesgue measure is invariant under translation. Identity \eqref{magicinvariance} in distribution is essential in proving the conformal invariance properties of the theory. Now, the first natural question is to inquire whether the  limit 
\begin{align}\label{eq:defPig}
 \Pi_{ \gamma,\mu_{\partial  },\mu}^{(z_i,\alpha_i)_i,(s_j,\beta_j)_j}  (g,F) := \lim_{\epsilon\to 0}\Pi_{ \gamma,\mu_{\partial  },\mu}^{(z_i,\alpha_i)_i,(s_j,\beta_j)_j}  (\epsilon,g,F). 
\end{align}
exists and is not trivial. Existence and non triviality will be phrased in terms of   the following three conditions 
\begin{align} 
&\sum_i\alpha_i+\frac{1}{2}\sum_j \beta_j>Q\label{bounds1}, \\
&\forall i\quad \alpha_i<Q,\label{bounds2}\\
&\forall j\quad \beta_j<Q.\label{bounds3}
\end{align}
We claim:
\begin{theorem}{\bf (Seiberg bounds)}\label{th:seiberg}
We have the following alternatives
\begin{enumerate}
\item Assume $\mu>0$ and $\mu_{\partial}\geq 0$. The partition function $\Pi_{\gamma,\mu_{\partial},\mu}^{(z_i,\alpha_i)_i,(s_j,\beta_j)_j}(g,1)$     converges and is non trivial if and only if  \eqref{bounds1}+\eqref{bounds2}+\eqref{bounds3} hold. 
\item Assume $\mu=0$ and $\mu_{\partial}>0$. The partition function $\Pi_{\gamma,\mu_{\partial},\mu}^{(z_i,\alpha_i)_i,(s_j,\beta_j)_j}(g,1)$      converges and is non trivial if and only if  \eqref{bounds1} +\eqref{bounds3} hold. 
\item In all other cases, we have $$\Pi_{\gamma,\mu_{\partial},\mu}^{(z_i,\alpha_i)_i,(s_j,\beta_j)_j}(g,1)=0\quad \text{ or }\quad  \Pi_{\gamma,\mu_{\partial},\mu}^{(z_i,\alpha_i)_i,(s_j,\beta_j)_j}(g,1)=+\infty.$$
\end{enumerate}
\end{theorem}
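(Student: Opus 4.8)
The plan is to integrate out the constant mode $c$ first, since the $c$-dependence of the integrand in \eqref{eq:defPigeps} is explicit and elementary. After dropping the $\epsilon$-regularization (justified by Proposition \ref{law}), the integrand contains the factor $e^{sc}$ coming from the marked points, where $s := \sum_i \alpha_i + \frac12\sum_j \beta_j$, multiplied by the curvature terms which, thanks to Gauss--Bonnet \eqref{GB}, contribute an additional factor linear in $c$ of the form $e^{-Qc\,\chi(\D)} = e^{-Qc}$, and multiplied by $\exp(-\mu e^{\gamma c} M - \mu_\partial e^{\frac{\gamma}{2}c} M_\partial)$ where $M,M_\partial$ denote the (a.s. finite, by Proposition \ref{finitemeas}) total GMC masses of bulk and boundary. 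First I would perform the substitution $u = e^{\gamma c}$ (resp.\ $u=e^{\frac{\gamma}{2}c}$) to recognize the $c$-integral as a Gamma-type integral. Concretely the $c$-integral reads $\int_\R e^{(s-Q)c}\exp(-\mu e^{\gamma c}M - \mu_\partial e^{\frac{\gamma}{2}c}M_\partial)\,dc$, and its convergence at $c\to +\infty$ is governed entirely by the interaction terms (the double-exponential decay kills everything provided $\mu M + \mu_\partial M_\partial > 0$ a.s.), while convergence at $c\to -\infty$ requires the exponent $s-Q$ to be positive, i.e.\ exactly the bound \eqref{bounds1}. This pins down \eqref{bounds1} as necessary and sufficient for the $c$-integral alone.

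Next I would show that finiteness of the \emph{expectation} over $X_{\partial\D}$ of the resulting random quantity forces the remaining conditions. After the $c$-integration one is left with an expression of the schematic form $\E\big[ (\text{insertion product}) \cdot (\mu M + \mu_\partial M_\partial)^{-(s-Q)/\gamma'}\big]$ (with $\gamma'=\gamma$ or $\gamma/2$ according to which cosmological constant dominates), where the insertion product $\prod_i e^{\alpha_i X_{\partial\D}(z_i)}\prod_j e^{\frac{\beta_j}{2}X_{\partial\D}(s_j)}$ is handled by a Girsanov/Cameron--Martin shift. The Girsanov transform shifts the field by $H(x) := \sum_i \alpha_i G(x,z_i) + \frac12\sum_j \beta_j G(x,s_j)$, turning the insertions into a deterministic Gaussian prefactor (finite once the $\E[X^2]$ renormalizations are tracked using Proposition \ref{circlegreen}) times a tilted GMC mass with an extra weight $e^{\gamma H(x)}$ inside the bulk integral and $e^{\frac{\gamma}{2}H(x)}$ inside the boundary integral. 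Near a bulk point $z_i$ the weight behaves like $|x-z_i|^{-\gamma\alpha_i}$, and near a boundary point $s_j$ like $|x-s_j|^{-\gamma\beta_j/2}$ (with the doubling factor $|1-x\bar{s_j}|$ contributing comparably on the boundary). The total mass near an insertion remains finite precisely when the local singularity is integrable against the GMC, which by the standard multifractal/KPZ analysis of moments of GMC mass of small balls gives the thresholds \eqref{bounds2} and \eqref{bounds3}; conversely when $\alpha_i \geq Q$ (resp.\ $\beta_j \geq Q$) the tilted mass blows up and the partition function diverges.

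The two cases in the theorem are then distinguished by which cosmological constant is present. When $\mu>0$ the bulk mass $M$ is a.s.\ strictly positive, so $\mu M + \mu_\partial M_\partial>0$ holds regardless of $\mu_\partial$, and both boundary and bulk insertions must satisfy their respective bounds, yielding \eqref{bounds1}+\eqref{bounds2}+\eqref{bounds3}. When $\mu=0$ and $\mu_\partial>0$ only the boundary mass enters the interaction, the relevant exponent in the $c$-integral involves $\gamma/2$, and only the boundary insertions \eqref{bounds3} are constrained by the interaction, so the condition reduces to \eqref{bounds1}+\eqref{bounds3}; bulk insertions with $\alpha_i<Q$ are automatically harmless because the bulk GMC measure $e^{\gamma X_{\partial\D}}d\lambda$ appears only through the Girsanov prefactor and not through any interaction that could diverge. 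The third case collects all parameter choices violating \eqref{bounds1} (giving $0$ if $s-Q\leq 0$ degenerates the $c$-integral at $-\infty$, more precisely $+\infty$) or violating \eqref{bounds2}/\eqref{bounds3} (giving $+\infty$ from non-integrable GMC singularities).

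\textbf{Main obstacle.} I expect the hard part to be the quantitative moment estimates controlling the behavior of the tilted GMC mass near the insertion points and near the boundary simultaneously. As flagged in the introduction and in Proposition \ref{finitemeas}, the Poincar\'e weight $g_P(x)^{\gamma^2/4} \sim (1-|x|^2)^{-\gamma^2/2}$ makes the bulk measure non-integrable in expectation near $\partial\D$ for $\gamma^2\geq 2$, so the sharpness of \eqref{bounds2} cannot be read off from a first-moment computation alone and instead requires the fractional-moment technique (Kahane convexity inequality together with the cascade comparison of Proposition \ref{finitemeas}) to be run jointly with the local insertion singularities. Establishing both necessity and sufficiency at the exact thresholds $\alpha_i=Q$, $\beta_j=Q$, $s=Q$ — i.e.\ showing the partition function genuinely diverges at equality and converges strictly below — is where the delicate work lies, and the bulk/boundary interaction (a bulk insertion near the boundary, or coincident-type estimates mixing $|x-z_i|$ with $(1-|x|^2)$) is exactly the new difficulty absent in the sphere case \cite{DKRV}.
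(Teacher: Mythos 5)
Your overall strategy --- integrate out the zero mode $c$ to obtain \eqref{bounds1}, then use Girsanov to convert the insertions into singular weights $e^{\alpha_i\gamma G(\cdot,z_i)}$ and $e^{\frac{\beta_j\gamma}{2}G(\cdot,s_j)}$ against the bulk and boundary GMC measures, and read off \eqref{bounds2}, \eqref{bounds3} from the integrability of those singularities --- is exactly the paper's. The genuine gap is that you stop at \emph{naming} the one estimate that is actually new here: whether the bulk measure $e^{\gamma X_{\partial\D}}d\lambda$ integrates the singularity $e^{\frac{\beta_j\gamma}{2}G(\cdot,s_j)}=|x-s_j|^{-\gamma\beta_j}$ created by a boundary insertion (note the exact doubling $G(x,s_j)=2\ln\frac{1}{|x-s_j|}$ for $s_j\in\partial\D$). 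This is not covered by the ``standard multifractal analysis of small balls'' you invoke, because near $\partial\D$ the bulk measure simultaneously carries the Poincar\'e density $(1-|x|^2)^{-\gamma^2/2}$, which is non-integrable in expectation for $\gamma^2\geq 2$, and enhanced correlations of the field; the ball-moment heuristics must therefore be redone from scratch. The paper does this in two separate arguments: for the necessity of $\beta_j<Q$, a lower bound on the mass of a small disk $D_\epsilon$ at distance of order $\epsilon$ from $s_j$, combined with tightness of the rescaled field (via Kolmogorov's criterion and the covariance estimates of the appendix) and the law of the iterated logarithm for $\epsilon\mapsto X_{\partial\D,\epsilon}(s_j)$; for sufficiency, Kahane's convexity inequality to bound fractional moments of the mass of the neighborhoods $A_r=\{z\in\D:|z-s_j|\leq r\}$ by $Cr^{(2+\gamma^2)\alpha-\gamma^2\alpha^2}$, then Borel--Cantelli and a dyadic decomposition in $1-|z|^2$. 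Neither half is routine, and your plan contains no substitute for either.

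A smaller but real slip: you attribute this boundary difficulty to the sharpness of \eqref{bounds2}. It in fact concerns \eqref{bounds3}: the bulk points $z_i$ sit at positive distance from $\partial\D$, so the condition $\alpha_i<Q$ is obtained exactly as in \cite{DKRV} with no boundary complication; it is the boundary weights $\beta_j$ whose effect on the \emph{bulk} measure requires the new analysis. Relatedly, in case 2 the correct statement is that no condition at all on the $\alpha_i$ is needed (not merely that $\alpha_i<Q$ is harmless), since for $\mu=0$ the only interaction is the boundary measure and $G(\cdot,z_i)$ is bounded on $\partial\D$ for interior $z_i$.
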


Along the computations involved in Theorem~\ref{th:seiberg}, we get the  expression below for the partition function when the metric $g$ is the Euclidean metric. Notice that considering the only Euclidean metric is not a restriction because we will see later that there is an explicit procedure to express the partition function in any background metric $g$ in terms of that in the Euclidean metric (Weyl anomaly, Subsection~\ref{sec:weyl}). 

\begin{proposition}{\bf (Partition function)}\label{prop:part}
Assume $g$ is the Euclidean metric $dx^2$. Then, in each case of Theorem~\ref{th:seiberg} ensuring existence and non triviality, we have
\begin{align}\label{reduced}
 \Pi_{ \gamma,\mu_{\partial  },\mu}^{(z_i,\alpha_i)_i,(s_j,\beta_j)_j}  (dx^2,F)=& \Big(\prod_i g_P(z_i)^{-\frac{\alpha_i^2}{4}}\Big)e^{C({\bf z},{\bf s})}\int_\R e^{\big(\sum_i\alpha_i+\sum_j\frac{\beta_j}{2}-Q\big)c}\\
& \E\Big[F( X_{\partial  \D}+H+c)   \exp\Big( - \mu e^{\gamma c} \int_{\D}e^{\gamma H}e^{\gamma X_{\partial \D}  }\,d\lambda   - \mu_\partial e^{\frac{\gamma}{2} c} \int_{\partial\D}e^{\frac{\gamma }{2}H}e^{\frac{\gamma}{2}  X_{\partial\D }  }\,d\lambda_{\partial } \Big) \Big]\,dc ,\nonumber
\end{align}
where 
\begin{align*}
H(x)=&\sum_i\alpha_iG(x,z_i)+\sum_j\frac{\beta_j}{2}G(x,s_j),\\
C({\bf z},{\bf s})=&\sum_{i<i'}\alpha_i\alpha_{i'}G(z_i,z_{i'})+\sum_{j<j'}\frac{\beta_j\beta_{j'}}{4}G(s_j,s_{j'})+\sum_{i,j}\frac{\alpha_i\beta_{j}}{2}G(z_i,s_{j})-\sum_j\frac{\beta_j^2}{8}.
\end{align*}
\end{proposition}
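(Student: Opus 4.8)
The plan is to insert the Euclidean metric into the regularized partition function \eqref{eq:defPigeps} and then strip off the regularization with a single Girsanov (Cameron--Martin) shift, using Propositions \ref{circlegreen} and \ref{law} to evaluate the resulting deterministic constants. Setting $g=dx^2$ gives $\ln g=0$, so the Weyl prefactor equals $1$ and every $\tfrac{Q}{2}\ln g$ insertion disappears; moreover $R_g=0$ in $\D$ and $K_g=1$ on $\partial\D$. Hence the bulk curvature term vanishes identically, while the boundary curvature term collapses to $-\tfrac{Q}{2\pi}\int_{\partial\D}(c+X_{\partial\D})\,d\lambda_\partial=-Qc$, since $\partial\D$ has length $2\pi$ and $m_\partial(X_{\partial\D})=0$. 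Extracting the explicit $c$-dependence of the bulk and boundary insertions then produces the weight $e^{(\sum_i\alpha_i+\sum_j\beta_j/2-Q)c}$ that multiplies the $dc$-integral in \eqref{reduced}.

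The core step is to rewrite the product of regularized vertex operators. With $Z_\epsilon:=\sum_i\alpha_iX_{\partial\D,\epsilon}(z_i)+\sum_j\tfrac{\beta_j}{2}X_{\partial\D,\epsilon}(s_j)$, I would factor this product as $N_\epsilon\,e^{Z_\epsilon-\frac12\E[Z_\epsilon^2]}$, where $e^{Z_\epsilon-\frac12\E[Z_\epsilon^2]}$ is a unit-mean Gaussian density and $N_\epsilon=(\prod_i\epsilon^{\alpha_i^2/2})(\prod_j\epsilon^{\beta_j^2/4})e^{\frac12\E[Z_\epsilon^2]}$ is deterministic. By Girsanov's theorem, weighting the expectation by $e^{Z_\epsilon-\frac12\E[Z_\epsilon^2]}$ is the same as shifting the field $X_{\partial\D}\mapsto X_{\partial\D}+h_\epsilon$, with $h_\epsilon(x)=\E[X_{\partial\D}(x)Z_\epsilon]$; this shift acts simultaneously inside $F$ and inside both regularized measures. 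Since the circle-averaged covariances converge to the Green function, $h_\epsilon\to H$ away from the marked points, which turns $F(X_{\partial\D}+c)$ into $F(X_{\partial\D}+H+c)$ and decorates the two Gaussian multiplicative chaos measures of Proposition \ref{law} with the deterministic densities $e^{\gamma H}$ and $e^{\frac\gamma2 H}$.

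Next I would compute $\lim_{\epsilon\to0}N_\epsilon$ by splitting $\E[Z_\epsilon^2]$ into diagonal and off-diagonal contributions. The off-diagonal covariances converge to $G(z_i,z_{i'})$, $G(s_j,s_{j'})$ and $G(z_i,s_j)$ and reassemble precisely into the first three sums of $C({\bf z},{\bf s})$. Each diagonal self-energy, once paired with its power of $\epsilon$, is handled by Proposition \ref{circlegreen}: part (1) converts the bulk factor $\epsilon^{\alpha_i^2/2}e^{\frac{\alpha_i^2}{2}\E[X_{\partial\D,\epsilon}(z_i)^2]}$ into a power of $g_P(z_i)$, and part (2) converts the boundary factor $\epsilon^{\beta_j^2/4}e^{\frac{\beta_j^2}{8}\E[X_{\partial\D,\epsilon}(s_j)^2]}$ into the constant $e^{-\beta_j^2/8}$. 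Assembling these gives the deterministic prefactor $(\prod_i g_P(z_i)^{-\alpha_i^2/4})\,e^{C({\bf z},{\bf s})}$ of \eqref{reduced}.

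Finally I must justify exchanging $\lim_{\epsilon\to0}$ with the expectation and the $dc$-integral. Convergence of the integrands is immediate from the convergence of the regularized measures to the chaos (Proposition \ref{law}), the convergence $h_\epsilon\to H$, and the continuity and boundedness of $F$; the delicate point is an $\epsilon$-uniform dominating bound, which is really obtained jointly with the proof of Theorem \ref{th:seiberg}. This is where I expect the main difficulty to lie: one must control the shifted chaos integrals $\int_\D e^{\gamma H}e^{\gamma X_{\partial\D}}\,d\lambda$ and $\int_{\partial\D}e^{\frac\gamma2 H}e^{\frac\gamma2 X_{\partial\D}}\,d\lambda_\partial$ near each marked point, where $H$ carries the logarithmic singularities $\alpha_iG(\cdot,z_i)$ and $\tfrac{\beta_j}{2}G(\cdot,s_j)$; their finiteness is exactly what the bounds \eqref{bounds2}--\eqref{bounds3} guarantee, while convergence of the $dc$-integral at $c\to+\infty$ uses $\mu+\mu_\partial>0$ and its convergence at $c\to-\infty$ uses \eqref{bounds1}. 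The algebra of the Girsanov shift is routine; the genuine work is this domination step together with the near-boundary integrability already flagged in Proposition \ref{finitemeas}.
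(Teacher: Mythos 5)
Your proposal follows essentially the same route as the paper, which gives no details of its own and simply states that the computation follows \cite{DKRV}: extract the $c$-dependence using $R_g=0$, $K_g=1$ and $m_{\partial}(X_{\partial\D})=0$, apply a single Cameron--Martin shift to the product of regularized insertions, evaluate the normalizing constant via Proposition \ref{circlegreen}, and fold the domination/integrability issues into the proof of Theorem \ref{th:seiberg}; this is exactly the intended argument. One caveat at the step you call ``assembling'': Proposition \ref{circlegreen}(1) gives $\E[X_{\partial\D,\epsilon}(z_i)^2]+\ln\epsilon\to\tfrac{1}{2}\ln g_P(z_i)$, so the diagonal bulk factor $\epsilon^{\alpha_i^2/2}e^{\frac{\alpha_i^2}{2}\E[X_{\partial\D,\epsilon}(z_i)^2]}$ converges to $g_P(z_i)^{+\alpha_i^2/4}$, exactly parallel to the boundary factor converging to $e^{-\beta_j^2/8}$ via part (2); carried out honestly, your computation therefore yields the exponent $+\alpha_i^2/4$ rather than the $-\alpha_i^2/4$ printed in \eqref{reduced} (a sign apparently inherited from the sphere case, where the diagonal correction of the Green function has the opposite sign), and you should flag that discrepancy rather than silently matching the stated prefactor.
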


\medskip 
\noindent {\it Proof of Theorem~\ref{th:seiberg} and Proposition~\ref{prop:part}.} We begin with the Seiberg bound.  Because the conformal factor $\varphi$ of $g=e^\varphi dx^2$ is assumed to be smooth (i.e. of class $C^1$), we can assume without loss of generality that $\varphi=0$. The main lines of the argument will be similar to \cite[Section 3]{DKRV}, up to a few modifications that we explain below. First observe that Propositions~\ref{law} and \ref{finitemeas} ensure that the interaction terms
$$\lim_{\epsilon\to 0}  \epsilon^{\frac{\gamma^2}{4}}\int_{\partial\D}e^{\frac{\gamma}{2} X_{\partial\D,\epsilon} }\,d\lambda_{\partial }\quad \text{ and }\lim_{\epsilon\to 0} \epsilon^{\frac{\gamma^2}{2}}\int_{\D}e^{\gamma X_{\partial \D,\epsilon}  }\,d\lambda $$
are non trivial provided that $\gamma\in]0,2[$. Hence, following \cite[Section 3]{DKRV}, $\Pi_{\gamma,\mu_{\partial},\mu}^{(z_i,\alpha_i)_i,(s_j,\beta_j)_j}(g,1)<+\infty$ if and only if \eqref{bounds1} holds: roughly speaking, recall that basically this amounts to claiming that the integral ($A,A'$ are two strictly positive constants)
$$\int_\R e^{\big(\sum_i\alpha_i+\frac{1}{2}\sum_j\beta_j-Q\big)c} e^{-\mu e^{\gamma c}A-\mu_\partial e^{\frac{\gamma}{2} c}A'}\,dc$$ is converging if and only if \eqref{bounds1} holds.

Recall then that the remaining part of the proof in \cite[Section 3]{DKRV} consists in determining when a marked point causes the blowing up of the interaction measure, in which case $ \Pi_{ \gamma,\mu_{\partial  },\mu}^{(z_i,\alpha_i)_i,(s_j,\beta_j)_j}  (dx^2,F)=0$.  The reason why a marked point may cause the blowing up of the interaction measure is that these marked points are handled with the Girsanov transform and this amounts to determining whether the bulk/boundary measures integrates some singularities of the type $\frac{1}{|x-z_i|^{\alpha_i\gamma}}$ or $\frac{1}{|x-s_j|^{\frac{\beta_j}{2}\gamma}}$. This is what we study in more details below.

Here we have two types of marked points (in the bulk or along the boundary) and two interaction measures: boundary $e^{\frac{\gamma}{2} X_{\partial\D} }\,d\lambda_{\partial }$ or bulk $e^{\gamma X_{\partial \D}  }\,d\lambda$. A marked point $(z_i,\alpha_i)$ in the bulk questions whether the bulk measure integrates the singularity $x\mapsto e^{\alpha_i \gamma G(x,z_i)}$. This is exactly the same situation as in \cite[Section 3]{DKRV}. Therefore the conclusion is the same: $\alpha_i$ must be strictly less than $Q$. The same argument settles the case of the effect of a boundary marked point $(s_j,\beta_j)$ on the boundary measure: $\beta_i$ must be strictly less than $Q$.  

What is not treated in \cite[Section 3]{DKRV} is the effect of boundary marked points on the bulk measure: namely we have to determine when the measure $e^{\gamma X_{\partial \D}  }\,d\lambda$ integrates the singularity $x\mapsto e^{\frac{\beta_j }{2}\gamma G(x,s_j)}$ for some $s_j$ belonging to the boundary $\partial\D$. Observe that the situation is more complicated as the behavior of the bulk measure is  highly perturbed when approaching the boundary: recalling the expression of the bulk measure in Proposition \ref{law}, we see that on the one hand the deterministic density $g_P(x)^{\frac{\gamma^2}{4}}$ blows up along the boundary and on the other hand the field $X_{\partial \D}$ acquires more and more correlations, which become maximal along the boundary: as $x$ approaches the boundary, $G(x,y)$ tends to behave like $2\ln\frac{1}{|x-y|}$ rather than $\ln\frac{1}{|x-y|}$. 

Let us now analyze the situation. We want to prove that the singularity is integrable if and only if $\beta_j<Q$. Without loss of generality, we assume that $s_j=1$. In what follows, $C$ stands for some generic constant, which may change along the lines and does not depend on relevant quantities.

Let us first assume that the singularity is integrable, more precisely for some $\delta$ fixed small enough
\begin{equation}\label{limcond}
\lim_{\epsilon\to 0}\int_{\D\cap B(1,\delta)}e^{\frac{\beta_j }{2}\gamma G_\epsilon(\cdot,1)}\epsilon^{\frac{\gamma^2}{2}} e^{\gamma X_{\partial \D,\epsilon}}\,d\lambda<+\infty
 \end{equation}  
where $$G_\epsilon (x,y)=\E[X_{\partial \D,\epsilon}(x)X_{\partial \D,\epsilon}(y)].$$
For each $\epsilon>0$ small enough, we denote by $D_\epsilon$ the small disk centered at $1-2\epsilon$  with radius $\epsilon$. Notice that for $\epsilon$ small enough, this disk is contained in   $B(1,\delta)\cap\D$. Therefore, we have the obvious relation
\begin{align*}
\int_{\D\cap B(1,\delta)}e^{\frac{\beta_j }{2}\gamma G_\epsilon(\cdot,1)}\epsilon^{\frac{\gamma^2}{2}} e^{\gamma X_{\partial \D,\epsilon}}\,d\lambda\geq &\int_{D_\epsilon}e^{\frac{\beta_j }{2}\gamma G_\epsilon(\cdot,1)} e^{\gamma X_{\partial \D,\epsilon}-\frac{\gamma^2}{2}\E[X_{\partial \D,\epsilon}^2]}e^{\frac{\gamma^2}{2}(\E[X_{\partial \D,\epsilon}^2]-\ln \frac{1}{\epsilon})}\,d\lambda.
\end{align*}
It is then plain to check that, for some constant $C$ independent of $\epsilon$ and uniformly with respect to $x\in D_\epsilon$,
$$ |\E[X_{\partial \D,\epsilon}(x)^2]- 2\ln \frac{1}{\epsilon}| \leq C,\quad |G_\epsilon (x,1)- 2\ln\frac{1}{\epsilon}| \leq C.$$
We deduce
\begin{align*}
\int_{\D\cap B(1,\delta)}e^{\frac{\beta_j }{2}\gamma G_\epsilon(\cdot,1)}\epsilon^{\frac{\gamma^2}{2}} e^{\gamma X_{\partial \D,\epsilon}}\,d\lambda\geq  & C \epsilon^{- \beta_j  \gamma-\frac{\gamma^2}{2}}\int_{D_\epsilon}  e^{\gamma X_{\partial \D,\epsilon}-\frac{\gamma^2}{2}\E[ X_{\partial \D,\epsilon}^2]} \,d\lambda.
\end{align*}
If we can establish the following estimate
\begin{equation}\label{limsup}
\text{in probability},\quad \limsup_{\epsilon\to 0}\epsilon^{-2-\gamma^2}\int_{D_\epsilon}  e^{\gamma X_{\partial \D,\epsilon}-\frac{\gamma^2}{2}\E[ X_{\partial \D,\epsilon}^2]} \,d\lambda=+\infty,
\end{equation}
we deduce that necessarily $\beta_j<  Q$ in order for \eqref{limcond} to hold.

To establish \eqref{limsup}, observe (see Subsection~\ref{estimations}) that, for some deterministic constant $C$ independent of $\epsilon$,
$$\sup_{\epsilon>0}\sup_{x\in D_\epsilon}|G_\epsilon(x,x)+2\ln \epsilon|<+\infty,$$ in such a way that
 \begin{equation}\label{minor}
 \int_{D_\epsilon}  e^{\gamma X_{\partial \D,\epsilon}-\frac{\gamma^2}{2}\E[ X_{\partial \D,\epsilon}^2]} \,d\lambda\geq C \epsilon^{2+\gamma^2} e^{\gamma X_{\partial \D,\epsilon}(1)}e^{\min_{x\in D_\epsilon} X_{\partial \D,\epsilon}(x)- X_{\partial \D,\epsilon}(1)}.
 \end{equation}  
Next, we estimate the $\min$ in the above expression. Observe that ($D(2,1)$ stands for the disk centered at $2$ with radius $1$)
$$\min_{x\in D_\epsilon} X_{\partial \D,\epsilon}(x)- X_{\partial \D,\epsilon}(1)=\min_{u\in D(2,1)} Y_\epsilon(u)$$ where the Gaussian process $Y_\epsilon $ is defined by
$$Y_\epsilon(u)=X_{\partial \D,\epsilon}(1-\epsilon u)-X_{\partial \D,\epsilon}(1).$$
The key point is to estimate the fluctuations of the Gaussian process $Y_\epsilon$. The reader may check (see Subsection~\ref{estimations})  that the variance of $Y_\epsilon(2)$ is bounded independently of $\epsilon$ and that for all $z,z'\in D(2,1)$
$$   \E[(Y_\epsilon(z) -Y_\epsilon(z') )^2]\leq C|z-z'|,
$$
uniformly in $0<\epsilon\leq 1$. Recall the Kolmogorov criterion
\begin{theorem}{\bf (Kolmogorov criterion)} Let $X$ be a continous stochastic process on $D(1,2)$.
If, for some $\beta,\alpha,C>0$: 
$$\forall x,z\in D(1,2),\quad \E[|X_x-X_z|^q]\leq C|x-z|^{2+\beta}.$$
For all $\delta\in ]0,\frac{\beta}{q}[$, we set $L=\sup_{x\not = z}\frac{|X_x-X_z|}{|x-z|^\delta}$. Then, for all $p<q$, $\E[L^\beta]\leq 1+\frac{C p2^{\beta-q\delta}}{(q-p)(2^{\beta-q\delta}-1)}$.
\end{theorem}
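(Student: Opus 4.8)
The plan is to run the classical dyadic chaining argument underlying Kolmogorov's continuity theorem. Since $X$ is here \emph{assumed} to be continuous, there is no modification to construct; the entire task reduces to producing a moment bound for the H\"older seminorm $L$, and I would obtain it by first establishing a tail estimate $\P(L>\lambda)\leq \min\big(1,C\lambda^{-q}\big)$ and then integrating it via the layer-cake formula. The precise shape of the claimed constant is dictated by exactly two elementary computations: a geometric series coming from a union bound over dyadic scales, and the splitting of the tail integral at the threshold $\lambda=1$.

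First I would fix a dyadic net on the planar disk $D(1,2)$: for each level $n\geq 0$ let $\mathcal{D}_n$ be the dyadic grid of mesh $2^{-n}$ restricted to $D(1,2)$, and call two of its points neighbors when their distance is comparable to $2^{-n}$. The relevant quantity is the largest increment across neighboring pairs, $K_n:=\max_{x\sim z}|X_x-X_z|$. Because the disk is two-dimensional, the number of neighboring pairs at level $n$ is of order $2^{2n}$, so a union bound combined with the hypothesis $\E[|X_x-X_z|^q]\leq C|x-z|^{2+\beta}$ yields $\E[K_n^q]\leq C\,2^{2n}\,2^{-n(2+\beta)}=C\,2^{-n\beta}$. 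This is the only step where the quantitative hypothesis is used, and it is precisely the dimension $2$ in the exponent $2+\beta$ that cancels the combinatorial factor $2^{2n}$.

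Next comes the chaining. Two dyadic points with $|x-z|\leq 2^{-n_0}$ can be joined by a telescoping path through dyadic points of increasing level, giving $|X_x-X_z|\leq 2\sum_{m>n_0}K_m$; dividing by $|x-z|^\delta\geq 2^{-(n_0+1)\delta}$ and using $\sum_{m>n_0}2^{-(m-n_0)\delta}<\infty$ one gets the pointwise domination $L\leq C\sup_m 2^{m\delta}K_m$, the chaining constant being absorbed into $C$. Continuity of $X$ then lets me replace the supremum over all pairs by the supremum over dyadic pairs (applying Fatou to the finite-level truncations $L_N\uparrow L$). A union bound over scales now produces the tail estimate $\P\big(\sup_m 2^{m\delta}K_m>\lambda\big)\leq \lambda^{-q}\sum_m 2^{mq\delta}\E[K_m^q]\leq C\lambda^{-q}\sum_m 2^{-m(\beta-q\delta)}$, and the sum is exactly the convergent geometric series $\frac{2^{\beta-q\delta}}{2^{\beta-q\delta}-1}$, convergent precisely because $\delta<\beta/q$. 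Thus $\P(L>\lambda)\leq \min\big(1,C'\lambda^{-q}\big)$ with $C'=C\frac{2^{\beta-q\delta}}{2^{\beta-q\delta}-1}$, and the layer-cake identity $\E[L^p]=\int_0^\infty p\lambda^{p-1}\P(L>\lambda)\,d\lambda$, split at $\lambda=1$, gives $\E[L^p]\leq \int_0^1 p\lambda^{p-1}\,d\lambda+C'p\int_1^\infty \lambda^{p-1-q}\,d\lambda=1+\frac{C'p}{q-p}$, which is the asserted bound. The restriction $p<q$ is exactly what makes the tail integral $\int_1^\infty\lambda^{p-1-q}\,d\lambda$ finite.

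The argument is conceptually routine; the only delicate point is bookkeeping. One must count the dyadic neighbors in the disk correctly, keep the chaining constant (and the finite geometric factor $\frac{2^{-\delta}}{1-2^{-\delta}}$ from the telescoping sum) consistently inside the generic constant $C$, and take care that continuity is invoked to pass from the countable dyadic supremum to the genuine supremum over all $x\neq z$. With these in hand the two elementary computations above reproduce the stated constant $1+\frac{Cp\,2^{\beta-q\delta}}{(q-p)(2^{\beta-q\delta}-1)}$.
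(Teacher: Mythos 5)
The paper gives no proof of this statement: it is recalled as a classical fact (the Kolmogorov--Chentsov criterion) and used only as a black box to deduce tightness of the family $(Y_\epsilon)_\epsilon$, so there is no in-paper argument to compare yours against. Your dyadic chaining proof is the standard one and its structure is sound: the union bound over the $\asymp 2^{2n}$ neighboring pairs at scale $2^{-n}$, the cancellation of that entropy factor against $2^{-n(2+\beta)}$ to get $\E[K_n^q]\leq C2^{-n\beta}$, the telescoping chain bounding $L$ by $\sup_m 2^{m\delta}K_m$, the geometric series whose convergence is exactly the condition $\delta<\beta/q$, and the layer-cake integral split at $\lambda=1$ whose convergence is exactly $p<q$. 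Each hypothesis is used where it must be, and the passage from the dyadic supremum to the full supremum via continuity is correctly flagged.

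The one point to be honest about is the constant. The chaining step costs a factor $\frac{2\cdot 2^{\delta}}{1-2^{-\delta}}$ before Markov's inequality (hence its $q$-th power afterwards), and the number of neighboring dyadic pairs inside the disk is a constant multiple of $2^{2n}$, not $2^{2n}$ exactly; absorbing these ``into the generic constant $C$'' means you actually prove the stated bound with $C$ replaced by an absolute multiple of $C$, not the literal expression $1+\frac{Cp2^{\beta-q\delta}}{(q-p)(2^{\beta-q\delta}-1)}$. That exact form comes out only if one defines $L$ over dyadic pairs or suppresses these factors. Since the statement as printed is itself loose (the conclusion should read $\E[L^{p}]$ rather than $\E[L^{\beta}]$, the parameter $\alpha$ never appears, and $q$ is not quantified in the hypothesis), and since the paper uses the result purely qualitatively, this discrepancy is immaterial to the application, but it should not be presented as yielding the stated constant verbatim.
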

One can then deduce that the family of processes $(Y_\epsilon)_\epsilon$ is tight in the space of continuous functions over $D(2,1)$ for the topology of uniform convergence. We deduce that for each subsequence, we can find $R$ large enough such that $\min_{x\in D_\epsilon} X_{\partial \D,\epsilon}(x)- X_{\partial \D,\epsilon}(1)\geq -R$ with probability arbitrarily close to $1$. Finally, we observe that the process $\epsilon\mapsto X_{\partial \D,\epsilon}(1)$ behaves like a Brownian motion at time $2\ln \frac{1}{\epsilon}$ (see \cite[section 6.1]{cf:DuSh}), we can use the law of the iterated logarithm   in \eqref{minor} to complete the proof of \eqref{limsup}.
 
\medskip
Now it remains to show that the condition $\beta_j<Q$ is sufficient to have integrability, that is $\int_{\D\cap B(1,\delta)}e^{\gamma\frac{\beta_j}{2}G(\cdot,1)} e^{\gamma X_{\partial \D }-\frac{\gamma^2}{2}\E[X_{\partial \D }^2]} g_P^{\frac{\gamma^2}{4}}\,d\lambda<+\infty$. To simplify a bit the notations we will prove the following equivalent statement
\begin{equation}
\int_{\H\cap B(0,1)}e^{\gamma\frac{\beta_j}{2}G_\H(z,0)} e^{\gamma X(z) -\frac{\gamma^2}{2}\E[X^2(z)]} \frac{1}{{\rm Im}(z)^{\frac{\gamma^2}{2}}} \, \lambda(dz)<+\infty
\end{equation}
where $X$ is the GFF on $\H$ defined by $X=X_{\partial\D}\circ \psi$ where $\psi(z)=\frac{z-i}{z+i} $ is the Cayley transform 
mapping the upper half-plane onto the unit disk 
 and $G_\H$ its Green function, that is $G_\H(x,y)=G(\psi(x),\psi(y))$. A simple check shows that on the ball $B(0,1)$ we have
 $$G_\H(x,y)=\ln\frac{1}{|x-y||x-\bar{y}|}+g(x,y)$$ where $g$ is a continuous bounded function. It is then also easy to see that for $x,y\in B(0,1)\cap \H$ and all $r\in]0,1[$
\begin{equation}\label{Kah1}
G_\H(r x ,ry)\geq G_\H(x,y)+2\ln\frac{1}{r}-C
\end{equation}
 where $C$ is some fixed positive constant.


The same argument as in Proposition \ref{finitemeas} shows that the quantity
$$\E\Big[\Big(\int_{\H\cap B(0,1)} e^{\gamma X(z) -\frac{\gamma^2}{2}\E[X^2(z)]} \frac{1}{{\rm Im}(z)^{\frac{\gamma^2}{2}}} \, \lambda(dz)\Big)^{\alpha}\Big]$$ is finite for $\alpha<\gamma^{-1}$. Then, for $r<1$, we can make a change of variables $r u=z$ and then combine the relation \eqref{Kah1} with Kahane's convexity inequality  \cite{cf:Kah} (see also \cite[Theorem 2.1]{review})
to deduce  (for some irrelevant constant $C$ which may change along lines)
\begin{align*}
\E\Big[\Big(&\int_{\H\cap B(0,r)} e^{\gamma X(z) -\frac{\gamma^2}{2}\E[X^2(z)]} \frac{1}{{\rm Im}(z)^{\frac{\gamma^2}{2}}} \, \lambda(dz)\Big)^{\alpha}\Big]\\
=&r^{2\alpha}\E\Big[\Big(\int_{\H\cap B(0,1)} e^{\gamma X(r u) -\frac{\gamma^2}{2}\E[X^2(r u)]} \frac{1}{{\rm Im}(ru)^{\frac{\gamma^2}{2}}} \, \lambda(du)\Big)^{\alpha}\Big]\\
=&C r^{(2-\frac{\gamma^2}{2})\alpha}\E\Big[\Big(\int_{\H\cap B(0,1)} e^{\gamma X(z) -\frac{\gamma^2}{2}\E[X^2(z)]} \frac{1}{{\rm Im}(z)^{\frac{\gamma^2}{2}}} \, \lambda(dz)\Big)^{\alpha}\Big]\E\big[e^{\alpha \gamma Z_{r}-\frac{\alpha^2\gamma^2}{2}2\ln\frac{1}{r}}\big]
\end{align*}
where $Z_r$ is a centered Gaussian random variable with variance $2\ln\frac{1}{r}$ and independent of everything.
Hence, for all $r<1$
\begin{align*}
\E\Big[\Big( \int_{\H\cap B(0,r)} e^{\gamma X(z) -\frac{\gamma^2}{2}\E[X^2(z)]} \frac{1}{{\rm Im}(z)^{\frac{\gamma^2}{2}}} \, \lambda(dz)\Big)^{\alpha}\Big] \leq & C r^{(2+\frac{\gamma^2}{2})\alpha- \alpha^2\gamma^2}.
\end{align*}
Let $\eta>0$. By using the Markov inequality and the above relation, we obtain
\begin{align*}
\P\big(\int_{\H\cap B(0,r)} &e^{\gamma X(z) -\frac{\gamma^2}{2}\E[X^2(z)]} \frac{1}{{\rm Im}(z)^{\frac{\gamma^2}{2}}} \, \lambda(dz)>r^{2+\frac{\gamma^2}{2}-\eta}\big)
\\
\leq & r^{-\alpha(2+\frac{\gamma^2}{2}-\eta)}\E\Big[\Big( \int_{\H\cap B(0,r)} e^{\gamma X(z) -\frac{\gamma^2}{2}\E[X^2(z)]} \frac{1}{{\rm Im}(z)^{\frac{\gamma^2}{2}}} \, \lambda(dz)\Big)^{\alpha}\Big]
\\
\leq &C r^{\eta\alpha-\alpha^2\gamma^2}.
\end{align*}
Choosing $\alpha>0$ small enough so as to get $\eta\alpha-\alpha^2\gamma^2>0$. We can then use the Borel-Cantelli lemma to deduce that there exists a random constant $R$, which is finite almost surely, such that
\begin{equation}\label{Kah2}
\sup_{r\in]0,1]}r^{-(2+\frac{\gamma^2}{2}-\eta)}\int_{\H\cap B(0,r)}  e^{\gamma X(z) -\frac{\gamma^2}{2}\E[X^2(z)]} \frac{1}{{\rm Im}(z)^{\frac{\gamma^2}{2}}} \, \lambda(dz)\leq R.
\end{equation}
Now we introduce the annuli for $n\geq 0$ 
$$A_{n}=\{z\in \H;    2^{-n-1}\,\leq  |z|\leq 2^{-n}\}.$$
We get from \eqref{Kah2}
\begin{align*}
\int_{\H\cap B(0,1)}&e^{\gamma\frac{\beta_j}{2}G_\H(z,0)} e^{\gamma X(z) -\frac{\gamma^2}{2}\E[X^2(z)]} \frac{1}{{\rm Im}(z)^{\frac{\gamma^2}{2}}} \, \lambda(dz)
\\
=&\sum_{n\geq 0}\int_{A_n}e^{\gamma\frac{\beta_j}{2}G_\H(z,0)} e^{\gamma X(z) -\frac{\gamma^2}{2}\E[X^2(z)]} \frac{1}{{\rm Im}(z)^{\frac{\gamma^2}{2}}} \, \lambda(dz)
\\
\leq & C\sum_{n\geq 0} 2^{\gamma \beta_j n} \int_{\H\cap B(0,2^{-n})}e^{\gamma X(z) -\frac{\gamma^2}{2}\E[X^2(z)]} \frac{1}{{\rm Im}(z)^{\frac{\gamma^2}{2}}} \, \lambda(dz)\\
\leq & CR\sum_{n\geq 0} 2^{\gamma \beta_j n}  2^{-n(2+\frac{\gamma^2}{2}-\eta)}.
\end{align*}
The proof of Theorem \ref{th:seiberg} is complete provided that we choose $0<\eta<\gamma (Q-\beta_j)$. Once the Seiberg bounds are established, the computation of the partition function (i.e. Proposition \ref{prop:part}) follows the same lines as in \cite[Theorem 3.2]{DKRV}.\qed

\subsection{Definitions of the Liouville field, Liouville measure and boundary Liouville measure} 

As long as one of the two conditions of Theorem \ref{th:seiberg} is satisfied,  one may define the joint law of the Liouville field $\phi$ together with the Liouville measure $Z(\cdot )$ and boundary Liouville measure $Z_{\partial}(\cdot )$. In spirit, the situation is that the convergence of the partition function entails that we get a non trivial probability law for the field $\phi =c+X_{\partial \D}+\frac{Q}{2}\ln g$ under the probability measure defined by the partition function. This field formally corresponds to the log-conformal factor of some random metric $e^{\gamma\phi }g$ conformally equivalent to $g$. Yet, observe that the field $\phi$ is in $H^{-1}$ almost surely so that a rigorous description of this metric is not straightforward, at least clearly not standard. The Liouville measure that we construct below is a random measure that can be thought of as the volume form of this formal metric tensor whereas the boundary Liouville measure corresponds to the line element along the boundary. Let us mention that we could construct as well the Liouville Brownian motion by using the construction made in \cite{GRV,GRV-FD} but a rigorous construction of a distance function associated to the metric tensor $e^{\gamma\phi }g$ remains an open question.

\medskip
Given a measured space $E$, we denote by $\mathcal{R}(E)$ the space of Radon measures on $E$ equipped with the topology of weak convergence. The joint law of $(\phi,Z,Z_\partial)$ is defined for all continuous bounded functional $F$ on $H^{-1}(\bar{\D})\times \mathcal{R}(\D)\times \mathcal{R}(\partial\D)$ by
 \begin{align*}
 &  \E_{\gamma,\mu_{\partial  },\mu,g}^{(z_i,\alpha_i)_i,(s_j,\beta_j)_j} \big[F(\phi,Z,Z_{\partial})\big]  \\
=& \frac{e^{  \frac{1}{96\pi}\Big(\int_{\D}|\partial \ln g|^2\,d\lambda+\int_{\partial \D}4\ln g\,d\lambda_{\partial} \Big)} }{\Pi_{\gamma,\mu_{\partial  },\mu}^{(z_i,\alpha_i)_i,(s_j,\beta_j)_j}(g,1)}  \lim_{\epsilon\to 0}\int_\R  \prod_i \epsilon^{\frac{\alpha_i^2}{2}}e^{\alpha_i  (X_{\partial \D,\epsilon}+Q/2\ln g)(z_i)}\prod_j \epsilon^{\frac{\beta_j^2}{4}}e^{\frac{\beta_j }{2} (X_{\partial \D,\epsilon} +Q/2\ln g)(s_j)} \nonumber\\\ &E\Big[F\Big( X_{\partial \D}+c+ Q/2\ln g ,e^{\gamma c}\epsilon^{\frac{\gamma^2}{2}} e^{\gamma (X_{\partial \D,\epsilon} +Q/2\ln g) } \,d\lambda, e^{\frac{\gamma}{2} c}\epsilon^{\frac{\gamma^2}{4}} e^{\frac{\gamma}{2}( X_{\partial \D,\epsilon}  +Q/2\ln g)} \,d\lambda_\partial  \Big)\nonumber \\
&\exp\Big( - \frac{Q}{4\pi}\int_{\D}R_{g} (c+X_{\partial  \D})  \,d\lambda_g - \mu e^{\gamma c}\epsilon^{\frac{\gamma^2}{2}}\int_{\D}e^{\gamma (X_{\partial \D,\epsilon}+ Q/2\ln g) }\,d\lambda \Big)\nonumber \\
&\exp\Big( - \frac{Q}{2\pi}\int_{\partial \D}K_{g} (c+X_{\partial\D})  \,d\lambda_{\partial g} - \mu_\partial e^{\frac{\gamma}{2} c}\epsilon^{\frac{\gamma^2}{4}}\int_{\partial\D}e^{\frac{\gamma}{2} (X_{\partial\D,\epsilon}+ Q/2\ln g) }\,d\lambda_{\partial } \Big) \Big]\,dc\nonumber .%
\end{align*}
 We denote by $\P_{\gamma,\mu_{\partial  },\mu,g}^{(z_i,\alpha_i)_i,(s_j,\beta_j)_j} $ the associated probability measure. In the following subsections, we will mention several interesting properties satisfied by these objects.

\subsection{Conformal changes of metric and Weyl anomaly}\label{sec:weyl}
Here we want to determine the dependence of the partition function \eqref{eq:defPig}  (as well as the Liouville field/measures) on the metric $g$  conformally equivalent to the Euclidean metric. In fact, this dependence enables to determine the central charge of the theory: 
\begin{theorem}{\bf (Weyl anomaly)}\label{th:CI}
\begin{enumerate}
\item 
Given two metrics $g,g'$ conformally equivalent to the flat metric and $g'=e^{\varphi}g$, we have
$$\ln\frac{\Pi_{\gamma,\mu_{\partial},\mu}^{(z_i,\alpha_i)_i,(s_j,\beta_j)_j}  (g',F)}{  \Pi_{\gamma,\mu_{\partial},\mu}^{(z_i,\alpha_i)_i,(s_j,\beta_j)_j}  (g,F)}= \frac{1+6Q^2}{96\pi}\Big(\int_{\D}|\partial^g \varphi|^2\,d\lambda_g+\int_\D 2R_g\varphi\,d\lambda_g+4\int_{\partial \D} K_g \varphi\,d\lambda_{\partial_g}\Big).$$
\item The law of the triple $(\phi,Z,Z_{\partial})$ under  $\P_{\gamma,\mu_{\partial  },\mu,g}^{(z_i,\alpha_i)_i,(s_j,\beta_j)_j} $ does not depend on the metric $g$ in the conformal equivalence class of the Euclidean metric.
\end{enumerate}
\end{theorem}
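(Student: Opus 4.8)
The plan is to prove both statements simultaneously by showing that, for an arbitrary background metric $g=e^{\sigma}dx^2$ (write $\sigma=\ln g$), the partition function factorizes as a purely deterministic $g$-dependent constant times the \emph{same} functional integral computed in the Euclidean background:
$$\Pi_{\gamma,\mu_\partial,\mu}^{(z_i,\alpha_i)_i,(s_j,\beta_j)_j}(g,F)=\exp\Big(\tfrac{1+6Q^2}{96\pi}\big(\textstyle\int_\D|\partial\sigma|^2\,d\lambda+4\int_{\partial\D}\sigma\,d\lambda_\partial\big)\Big)\,\Pi_{\gamma,\mu_\partial,\mu}^{(z_i,\alpha_i)_i,(s_j,\beta_j)_j}(dx^2,F).$$
The engine is the Cameron--Martin/Girsanov theorem applied to the part of the integrand in \eqref{eq:defPigeps} that is linear in the Gaussian field $X_{\partial\D}$. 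First I would isolate the curvature coupling and split it according to the zero mode $c$ and the field $X_{\partial\D}$. The $c$-part is $\exp\big(-\tfrac{Qc}{4\pi}(\int_\D R_g\,d\lambda_g+2\int_{\partial\D}K_g\,d\lambda_{\partial g})\big)=e^{-Qc}$ by Gauss--Bonnet \eqref{GB} (with $\chi(\D)=1$), hence metric-independent. The $X_{\partial\D}$-part is $e^{L}$ with $L:=-\tfrac{Q}{4\pi}\int_\D R_gX_{\partial\D}\,d\lambda_g-\tfrac{Q}{2\pi}\int_{\partial\D}K_gX_{\partial\D}\,d\lambda_{\partial g}$, a centered Gaussian, so Girsanov gives $\E[e^{L}\Phi(X_{\partial\D})]=e^{\frac12\E[L^2]}\E[\Phi(X_{\partial\D}+h_g)]$ for every functional $\Phi$, with $h_g(x)=\int G(x,y)\,\nu_g(dy)$ and $\nu_g=-\tfrac{Q}{4\pi}R_g\,d\lambda_g-\tfrac{Q}{2\pi}K_g\,d\lambda_{\partial g}$.

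The crucial step is identifying $h_g$. Since the Euclidean disk has $R=0$, $K=1$, formulas \eqref{curv}--\eqref{geod} give $R_g\,d\lambda_g=-\triangle\sigma\,d\lambda$ and $K_g\,d\lambda_{\partial g}=(1+\tfrac12\partial_n\sigma)\,d\lambda_\partial$. Feeding this into the Green representation \eqref{relgreen} and using property~5 of $G$ (namely $m_\partial G(x,\cdot)=0$), the boundary terms $\int_{\partial\D}G\,\partial_n\sigma\,d\lambda_\partial$ cancel and one obtains the explicit form $h_g=-\tfrac{Q}{2}(\sigma-m_\partial\sigma)$. The point is that $h_g+\tfrac{Q}{2}\sigma=\tfrac{Q}{2}m_\partial\sigma$ is a \emph{constant}, so after the Girsanov shift every occurrence of the background charge $X_{\partial\D}+\tfrac{Q}{2}\ln g$ (in $F$, in the insertions $e^{\alpha_i(\cdots)(z_i)}$ and $e^{\frac{\beta_j}{2}(\cdots)(s_j)}$, and in the measures $Z,Z_\partial$) becomes $X_{\partial\D}+\tfrac{Q}{2}m_\partial\sigma$. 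Absorbing this constant by the translation $c\mapsto c-\tfrac{Q}{2}m_\partial\sigma$ turns the entire integrand into the Euclidean one. Because the Girsanov constant and the prefactor do not involve $F$, dividing by $\Pi(g,1)$ shows that the law of $(\phi,Z,Z_\partial)$ equals its Euclidean law for every $g$ — this is part 2.

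To get part 1 I would collect the deterministic constants. Evaluating $\E[L^2]=\int h_g\,d\nu_g$ and applying Green--Riemann \eqref{GR} to $\int_\D(\sigma-m_\partial\sigma)\triangle\sigma\,d\lambda$ yields $\E[L^2]=\tfrac{Q^2}{8\pi}\int_\D|\partial\sigma|^2\,d\lambda$, so the Girsanov factor is $\exp\big(\tfrac{Q^2}{16\pi}\int_\D|\partial\sigma|^2\,d\lambda\big)$; the translation of $c$ produces an extra factor $e^{\frac{Q^2}{2}m_\partial\sigma}$ out of $e^{-Qc}$. Adding these to the explicit prefactor $\tfrac{1}{96\pi}(\int_\D|\partial\sigma|^2\,d\lambda+4\int_{\partial\D}\sigma\,d\lambda_\partial)$ and using $\tfrac{1}{96\pi}+\tfrac{Q^2}{16\pi}=\tfrac{1+6Q^2}{96\pi}$ (the boundary coefficients combine the same way) gives the factorization displayed above. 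Taking the log-difference for $g'=e^{\varphi}g$, i.e.\ $\sigma'=\sigma+\varphi$, produces $\tfrac{1+6Q^2}{96\pi}\big(\int_\D(|\partial\varphi|^2+2\,\partial\sigma\cdot\partial\varphi)\,d\lambda+4\int_{\partial\D}\varphi\,d\lambda_\partial\big)$, and a final application of \eqref{curv}, \eqref{geod} and \eqref{GR} (in which the $\partial_n\sigma$ boundary contributions once more cancel) rewrites this precisely as the covariant expression $\tfrac{1+6Q^2}{96\pi}\big(\int_\D|\partial^g\varphi|^2\,d\lambda_g+2\int_\D R_g\varphi\,d\lambda_g+4\int_{\partial\D}K_g\varphi\,d\lambda_{\partial g}\big)$ of the statement.

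The main obstacle is not the algebra but the regularity and the commutation of limits. The Girsanov shift is applied to the distribution-valued $X_{\partial\D}$, and it enters the $\epsilon$-regularized insertions and interaction measures as the circle average $h_{g,\epsilon}$; since $h_g$ inherits only the $C^1$ regularity of $\sigma$, one must justify carefully that $h_{g,\epsilon}\to h_g$ strongly enough to interchange the shift with the $\epsilon\to0$ convergence of Proposition~\ref{law}. Similarly $\triangle\sigma$ and $\partial_n\sigma$ are a priori only distributions, so the Green-identity manipulations and the computation of $\E[L^2]$ should first be performed for smooth $\sigma$ and then extended by approximation, controlling the bulk and boundary cosmological terms (as in Proposition~\ref{finitemeas}) uniformly along the approximation. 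This passage to the limit, rather than the conformal bookkeeping, is where the genuine technical care is required.
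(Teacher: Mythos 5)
Your proposal follows essentially the same route as the paper's proof: Girsanov transform applied to the curvature coupling linear in $X_{\partial\D}$, identification of the resulting shift as $-\tfrac{Q}{2}(\ln g-m_{\partial}(\ln g))$ via \eqref{curv}, \eqref{geod} and \eqref{relgreen}, absorption of the leftover constant by translating the zero mode $c$, computation of the variance factor $e^{\frac{Q^2}{16\pi}\int_\D|\partial\ln g|^2 d\lambda}$, and the final rewriting in covariant form using \eqref{GR}; the constants all check out. Your added remarks on the regularity of $\sigma$ and the interchange of the Girsanov shift with the $\epsilon\to0$ limit address points the paper passes over silently, but they do not change the argument.
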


In the language of CFT, the above theorem states that the central charge of LQFT is $1+6 Q^2$: see the lecture notes \cite{gaw}.

\noindent {\it Proof.}   In \eqref{eq:defPig}, we use the Girsanov transform to the exponential term
$$\exp\Big( - \frac{Q}{4\pi}\int_{\D}R_{g}  X_{\partial \D } \,d\lambda_g   - \frac{Q}{2\pi}\int_{\partial \D}K_{g}  X_{\partial \D }  \,d\lambda_{\partial g}  \Big),$$
which has the effect of shifting the field $X$ by
$$-  \frac{Q}{4\pi}\int_{\D}R_{g}  G_{\partial \D}(\cdot,z) \,\lambda_g(dz)   - \frac{Q}{2\pi}\int_{\partial \D}   G_{\partial \D}(\cdot,z) K_g\,d\lambda_{\partial g}. $$
Then we use the rules \eqref{curv}+\eqref{geod}+\eqref{relgreen} to see that this shift is equal to
$$-  \frac{Q}{2}(\ln g-m_{\partial}(\ln g)).$$
Due to the Girsanov renormalization, the whole partition function will be multiplied by the exponential of the variance of the field $\frac{Q}{4\pi}\int_{\D}R_{g}  X_{\partial \D } \,d\lambda_g  + \frac{Q}{2\pi}\int_{\partial \D}K_{g}  X_{\partial \D }  \,d\lambda_{\partial g} $, which can be computed with \eqref{curv}+\eqref{geod}+\eqref{relgreen} and is given by
$$\frac{Q^2}{16\pi}\int_\D|\partial \ln g|^2\,d\lambda.$$
Hence, by making the changes of variables $v=c+\frac{Q}{2}m_{\partial}(\ln g)$, we get
\begin{align*} 
\Pi_{\gamma,\mu_{\partial  },\mu}^{(z_i,\alpha_i)_i,(s_j,\beta_j)_j}&(g,F)\\
 =&e^{\frac{6 Q^2}{96\pi}\int_\D|\partial \ln g|^2\,d\lambda+\frac{Q^2}{2}m_{\partial}(\ln g)} e^{\frac{1 }{96\pi}\big(\int_{\R^2}|\partial \ln g|^2\,d\lambda+\int_{\partial}4\ln g\,d\lambda_{\partial}\big)}\\
 & \lim_{\epsilon\to 0}\int_\R e^{\big(\sum_i\alpha_i+\frac{1}{2}\sum_j\beta_j-Q\big)v} \E\Big[F( X_{\partial \D}+v,e^{\gamma v}e^{\gamma X_{\partial \D}}\,d\lambda,e^{\frac{\gamma}{2} v}e^{\frac{\gamma}{2} X_{\partial \D}}\,d\lambda_\partial )\nonumber\\
 &\prod_i \epsilon^{\frac{\alpha_i^2}{2}}e^{\alpha_i   X_{\partial \D,\epsilon} (z_i)} \prod_j \epsilon^{\frac{\beta_j^2}{4}}e^{\frac{\beta_j }{2} X_{\partial \D,\epsilon} (s_j)}\\
& \exp\Big(  - \mu e^{\gamma v}\epsilon^{\frac{\gamma^2}{2}}\int_{\D}e^{\gamma  X_{\partial \D,\epsilon} }\,d\lambda    - \mu_{\partial} e^{\frac{\gamma}{2}v}\epsilon^{\frac{\gamma^2}{4}}\int_{\partial\D}e^{\frac{\gamma}{2} X_{\partial \D,\epsilon} }\,d\lambda_{\partial  } \Big) \Big]\,dc\nonumber \\
=&e^{\frac{1+6Q^2}{96\pi}\Big(\int_{\D}|\partial \ln g|^2\,d\lambda+\int_{\partial  \D}4\ln g\,d\lambda_{\partial}\Big)}  \Pi_{\gamma,\mu_{\partial  },\mu}^{(z_i,\alpha_i)_i,(s_j,\beta_j)_j}(dx^2,F)\nonumber.
 \end{align*}   
To complete the proof for two metrics $g,g'$ conformally equivalent to the Euclidean metric, say $g'=e^\varphi g$, we apply twice the above result to get
\begin{align*} 
\ln&\frac{\Pi_{\gamma,\mu_{\partial  },\mu}^{(z_i,\alpha_i)_i,(s_j,\beta_j)_j} (g',F)}{\Pi_{\gamma,\mu_{\partial  },\mu}^{(z_i,\alpha_i)_i,(s_j,\beta_j)_j} (g,F)}\nonumber\\
=& \frac{1+6Q^2}{96\pi}\Big(\int_{\D}|\partial \ln g'|^2\,d\lambda+\int_{\partial  \D}4\ln g'\,d\lambda_{\partial}-\int_{\D}|\partial \ln g|^2\,d\lambda-\int_{\partial  \D}4\ln g\,d\lambda_{\partial}\Big) \nonumber\\
=& \frac{1+6Q^2}{96\pi}\Big(\int_{\D}|\partial \varphi|^2\,d\lambda+2\int_{\D}\partial \varphi\cdot \partial \ln g\,d\lambda+\int_{\partial  \D}4\varphi\,d\lambda_{\partial} \Big).\nonumber
 \end{align*}  
Now we use \eqref{GR}+\eqref{curv} to get
\begin{align*} 
\ln&\frac{\Pi_{\gamma,\mu_{\partial  },\mu}^{(z_i,\alpha_i)_i,(s_j,\beta_j)_j} (g',F)}{\Pi_{\gamma,\mu_{\partial  },\mu}^{(z_i,\alpha_i)_i,(s_j,\beta_j)_j} (g,F)}= \frac{1+6Q^2}{96\pi}\Big(\int_{\D}|\partial \varphi|^2\,d\lambda+2\int_{\D} \varphi R_g\,d\lambda_g+4\int_{\partial  \D} \varphi(1+\frac{1}{2}\partial_n\ln g)\,d\lambda_{\partial} \Big). 
 \end{align*}
We complete the proof with \eqref{geod}.\qed

\subsection{Conformal covariance and KPZ formula} 

 Now we want to establish the conformal covariance of the partition function, i.e. to determine its behavior under the action  of M\"obius transforms on the marked points. We focus here on the case when the background metric is the Euclidean one: as shown by the Weyl anomaly (Theorem \ref{th:CI}), this is not a restriction. One thus looks at
\begin{align}\label{flatmobius}
&\Pi_{\gamma,\mu_{\partial},\mu}^{(\psi(z_i),\alpha_i)_i,(\psi(s_j),\beta_j)_j}(dx^2,F)\\
=&\lim_{\epsilon\to 0}\int_\R e^{\big(\sum_i\alpha_i+\frac{1}{2}\sum_j\beta_j-Q\big)c} \E\Big[F( X_{\partial\D,\epsilon}+c)\prod_i\epsilon^{\frac{\alpha_i^2}{2}}e^{\alpha_i X_{\partial \D,\epsilon}(\psi(z_i))} \prod_j \epsilon^{\frac{\beta_j^2}{4}}e^{\frac{\beta_j }{2} X_{\partial \D,\epsilon} (\psi(s_j))} \nonumber\\
 &\exp\Big(-\mu e^{\gamma c}\epsilon^{\frac{\gamma^2}{2}}\int_{\D}e^{\gamma  X_{\partial \D,\epsilon} }\,d\lambda    - \mu_{\partial} e^{\frac{\gamma}{2} c}\epsilon^{\frac{\gamma^2}{4}}\int_{\partial\D}e^{\frac{\gamma}{2} X_{\partial \D,\epsilon} }\,d\lambda_{\partial } \Big) \Big]\,dc.\nonumber
\end{align}
where $\psi$ is a M\"obius transform of the unit disk. 

We use the following convention for the rest of this section. If $M$ is a measure on a measurable space $E$ and $\psi:E\to E$ is a bi-measurable bijection then the measure $M\circ \psi $ is defined by the relation $M\circ \psi(A)=M(\psi (A))$ for all measurable set $A\subset E$.
\begin{theorem}
Let  $\psi$ be a M\"obius transform  of the disk. Then
$$\Pi_{\gamma,\mu_\partial,\mu}^{(\psi(z_i),\alpha_i)_i,(\psi(s_j),\beta_j)_j} (dx^2,1)=\prod_i|\psi'(z_i)|^{-2\Delta_{\alpha_i}}\prod_j|\psi'(s_j)|^{-\Delta_{\beta_j}}\Pi_{\gamma,\mu_\partial,\mu}^{(z_i,\alpha_i)_i,(s_j,\beta_j)_j} (dx^2,1)$$
where the conformal weights $\triangle_\alpha$ are defined by $$\triangle_\alpha= \frac{\alpha}{2}(Q-\frac{\alpha}{2}).$$
Furthermore the law of the triple  $(\phi,Z,Z_{\partial})$ under $\P_{\gamma,\mu_{\partial  },\mu,dx^2}^{(z_i,\alpha_i)_i,(s_j,\beta_j)_j}$  is the same as that of the triple  $\big(\phi\circ \psi+Q\ln|\psi'|,Z\circ \psi , Z_\partial\circ \psi \big)$ under $\P_{\gamma,\mu_{\partial  },\mu,dx^2}^{(\psi(z_i),\alpha_i)_i,(\psi(s_j),\beta_j)_j} $.
\end{theorem}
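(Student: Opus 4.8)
The plan is to prove this conformal covariance statement by the standard change-of-variables technique for Gaussian multiplicative chaos integrals, building directly on the M\"obius transform formulas \eqref{eq:mobiusformula}, \eqref{greenpsi} and the covariance estimate in Proposition \ref{circlegreen}(3). The central observation is that a M\"obius transform $\psi$ of the disk is an isometry for the Neumann Green function up to an explicit logarithmic correction: by \eqref{greenpsi} the field $X_{\partial\D}\circ\psi$ has the same covariance as $X_{\partial\D}$ modulo the deterministic shift $-\ln|\psi'(x)|-\ln|\psi'(y)|$, which is exactly the correction one absorbs into the conformal factors. So the first step is to start from the right-hand side quantity $\Pi_{\gamma,\mu_\partial,\mu}^{(\psi(z_i),\alpha_i)_i,(\psi(s_j),\beta_j)_j}(dx^2,F)$ written in the regularized form \eqref{flatmobius}, and perform the substitution $x\mapsto\psi(x)$ inside all the integrals (bulk integral against $d\lambda$, boundary integral against $d\lambda_\partial$, and the insertion points).

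First I would treat the field: since $X_{\partial\D}\circ\psi$ is a centered Gaussian field whose covariance is $G(\psi(x),\psi(y))=G(x,y)-\ln|\psi'(x)|-\ln|\psi'(y)|$ by \eqref{greenpsi}, one can write $X_{\partial\D}\circ\psi$ in distribution as $X_{\partial\D}+$ (a deterministic Gaussian correction that is degenerate because $\ln|\psi'|$ is harmonic), so that the regularized circle averages transform as in Proposition \ref{circlegreen}(3): $\E[(X_{\partial\D}\circ\psi)_\epsilon(x)^2]+\ln\epsilon\to\frac12\ln g_P(\psi(x))-2\ln|\psi'(x)|$ in the bulk and the analogous boundary statement. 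The insertion factors $\epsilon^{\alpha_i^2/2}e^{\alpha_i X_{\partial\D,\epsilon}(\psi(z_i))}$ and $\epsilon^{\beta_j^2/4}e^{\frac{\beta_j}{2}X_{\partial\D,\epsilon}(\psi(s_j))}$ then pick up, in the limit, exactly the powers of $|\psi'(z_i)|$ and $|\psi'(s_j)|$ predicted by the Girsanov/Cameron-Martin shift. Next I would handle the interaction measures: under $x\mapsto\psi(x)$ the bulk GMC measure $e^{\gamma X_{\partial\D}}d\lambda$ transforms using the Jacobian $|\psi'(x)|^2$ of $d\lambda$ together with the field shift, and the key algebraic identity is that the combined factor $|\psi'(x)|^2\cdot|\psi'(x)|^{-\gamma^2/2}\cdot(\text{field shift }e^{\gamma\cdot(-\ln|\psi'|)})$ reorganizes into the covariance $e^{\gamma\phi\circ\psi+Q\ln|\psi'|}$ of the transported field, using $Q=\frac{2}{\gamma}+\frac{\gamma}{2}$. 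This is precisely where the conformal weight $\Delta_\alpha=\frac{\alpha}{2}(Q-\frac{\alpha}{2})$ emerges: the $\alpha^2/4$ term comes from the variance normalization $g_P(\psi(z_i))^{-\alpha_i^2/4}$ versus $g_P(z_i)^{-\alpha_i^2/4}$ interacting with the $-2\alpha_i\ln|\psi'(z_i)|$ shift, while the $\frac{\alpha Q}{2}$ term comes from the linear Girsanov shift of the insertion by $\frac{Q}{2}\cdot(-2\ln|\psi'|)$-type contributions.

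The cleanest way to organize the bookkeeping is to reduce, via Proposition \ref{prop:part}, to the explicit form \eqref{reduced} of the partition function in the Euclidean metric and then verify the covariance at the level of the three explicit ingredients: the prefactor $\prod_i g_P(z_i)^{-\alpha_i^2/4}$, the cross-term constant $C(\mathbf z,\mathbf s)$, and the remaining GMC expectation with the field shift $H$. For the prefactor one uses $g_P(\psi(z_i))=g_P(z_i)|\psi'(z_i)|^{-2}$, which follows from the conformal invariance of the hyperbolic metric; for $C(\mathbf z,\mathbf s)$ one substitutes \eqref{greenpsi} into every Green-function term and collects the $\ln|\psi'|$ contributions, which must recombine with the prefactor change to give exactly $\prod_i|\psi'(z_i)|^{-2\Delta_{\alpha_i}}\prod_j|\psi'(s_j)|^{-\Delta_{\beta_j}}$ after accounting for the $-\frac{\beta_j^2}{8}$ boundary self-energy terms via Proposition \ref{circlegreen}(2). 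The $c$-integral contributes nothing new because the Liouville exponent $\sum_i\alpha_i+\frac12\sum_j\beta_j-Q$ is M\"obius-invariant. For the second (law) statement, I would run the same substitution inside the joint law of $(\phi,Z,Z_\partial)$, noting that the transported Liouville field is precisely $\phi\circ\psi+Q\ln|\psi'|$ by construction of the shift, and that the bulk and boundary measures push forward to $Z\circ\psi$ and $Z_\partial\circ\psi$ under the convention $M\circ\psi(A)=M(\psi(A))$.

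I expect the main obstacle to be the rigorous passage to the limit $\epsilon\to0$ for the regularized insertion factors after the change of variables. The circle-average regularization $X_{\partial\D,\epsilon}$ is defined geometrically (Euclidean circles of radius $\epsilon$), and this regularization is \emph{not} preserved under $\psi$: the image under $\psi$ of a Euclidean $\epsilon$-circle is only approximately a circle of radius $\epsilon|\psi'|$, so one must control the discrepancy between $(X_{\partial\D}\circ\psi)_\epsilon$ and $X_{\partial\D,\epsilon|\psi'|}\circ\psi$ and show it vanishes in the limit, uniformly enough to justify interchanging limit and expectation. This is exactly the content that Proposition \ref{circlegreen}(3) is designed to supply, together with a uniform-integrability argument (of the same flavor as the moment bounds in the proof of Proposition \ref{finitemeas}) to upgrade convergence in probability of the regularized measures to convergence of the full partition-function expectation. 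The boundary insertions and the bulk-boundary interaction near $\partial\D$ require the extra care flagged after Proposition \ref{finitemeas}, but once Proposition \ref{circlegreen}(3) is in hand the remaining estimates are routine adaptations of the sphere case in \cite{DKRV}.
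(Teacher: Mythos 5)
Your overall strategy is the same as the paper's (change of variables $x\mapsto\psi(x)$ in the regularized partition function, Proposition \ref{circlegreen}(3) together with \cite{shamov} to control the mismatch between the circle average of $X_{\partial\D}\circ\psi$ and the pullback of the circle average, Girsanov for the insertions), and your last paragraph correctly identifies the regularization issue that the paper's Lemma \ref{mobiusmeasure} is designed to settle. But there is a genuine error at the foundational step. You assert that $X_{\partial\D}\circ\psi$ equals in law $X_{\partial\D}$ plus ``a deterministic Gaussian correction that is degenerate because $\ln|\psi'|$ is harmonic''. This is false: by \eqref{greenpsi} the covariance of $X_{\partial\D}\circ\psi$ is $G(x,y)-\ln|\psi'(x)|-\ln|\psi'(y)|$, which cannot be obtained from $G(x,y)$ by adding a deterministic function to the field. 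The correct statement is the paper's \eqref{eq:changeofmetric}: only the recentered field $X_{\partial\D}\circ\psi-m_\partial(X_{\partial\D}\circ\psi)$ has the law of $X_{\partial\D}$, and the boundary mean $m_\partial(X_{\partial\D}\circ\psi)$ is a \emph{nondegenerate} centered Gaussian variable, correlated with the field, with variance $\frac{D_\psi}{4\pi^2}=-\frac{1}{\pi}\int_{\partial\D}\ln|\psi'|\,d\lambda_\partial$, which is strictly positive for every $\psi$ other than a rotation (Jensen, since $\frac{1}{2\pi}\int_{\partial\D}|\psi'|\,d\lambda_\partial=1$).

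This random constant is not a technicality; it is the mechanism that produces the theorem. The paper first translates $\bar c=c+m_\partial(X_{\partial\D}\circ\psi)$ in the $c$-integral (so the $c$-integral is doing essential work, contrary to your remark that it ``contributes nothing new''), which leaves the factor $e^{Qm_\partial(X_{\partial\D}\circ\psi)}$ inside the expectation; the Girsanov transform of that factor shifts the field by $\frac{Q}{2\pi}H(\psi^{-1}(\cdot))=Q\ln\frac{1}{|\psi'|}\circ\psi^{-1}+\frac{Q}{8\pi^2}D_\psi$, and this shift is precisely what (i) puts the term $Q\ln|\psi'|$ into the transformation law $\phi\mapsto\phi\circ\psi+Q\ln|\psi'|$ of the Liouville field, (ii) supplies the factors $|\psi'(z_i)|^{-\alpha_iQ}$ and $|\psi'(s_j)|^{-\beta_jQ/2}$, i.e.\ the linear part of $-2\Delta_{\alpha}=-\alpha Q+\alpha^2/2$, and (iii) cancels the leftover density in the interaction integrals (the Jacobian plus the GMC normalization give $|\psi'|^{Q\gamma}$ while pulling back the insertion potential through \eqref{greenpsi} gives $|\psi'|^{-\gamma(\sum_i\alpha_i+\frac12\sum_j\beta_j)}$; these do not cancel by themselves). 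If the correction were really deterministic and degenerate, the field would transform as $\phi\mapsto\phi\circ\psi$ with no $Q\ln|\psi'|$ term, contradicting the very statement you are proving. Reducing to the explicit formula \eqref{reduced} does not let you bypass this: the pushforward of the GMC measures under $\psi$ in Lemma \ref{mobiusmeasure} carries the random factor $e^{\gamma m_\partial(X_{\partial\D}\circ\psi)}$, so the Girsanov step on the boundary mean is unavoidable. The remaining bookkeeping you describe (conformal invariance of $g_P$, the transformation of $C(\mathbf{z},\mathbf{s})$, the $-\frac{\beta_j^2}{8}$ self-energy terms) is sound once this is repaired.
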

\begin{proof} To facilitate the comprehension, we take only into consideration the law of the Liouville field and we leave to the reader the details of the whole proof for the triple $(\phi,Z,Z_{\partial})$.\\
We first study the behavior of the measure under the M\"obius transform $\phi$:
\begin{lemma}\label{mobiusmeasure}
For any $f\in C^2(\overline{\mathds{D}})$, we have
\begin{align*}
&(X_{\partial\mathds{D}}\circ\psi,\lim\limits_{\epsilon\to 0}\int_{\mathds{D}}f \epsilon^{\frac{\gamma^2}{2}}e^{\gamma X_{\partial\mathds{D},\epsilon}}d\lambda,\lim\limits_{\epsilon\to 0}\int_{\mathds{D}}f \epsilon^{\frac{\gamma^2}{4}}e^{\frac{\gamma}{2}X_{\partial\mathds{D},\epsilon}}d\lambda_{\partial})\\
\overset{law}{=}&(X_{\partial\mathds{D}}+m_\partial(X_{\partial\mathds{D}}\circ\psi),\lim\limits_{\epsilon\to 0}\int_{\mathds{D}}f\circ\psi e^{\gamma(X_{\partial\mathds{D},\epsilon}+m_\partial(X_{\partial\mathds{D}}\circ\psi))}|\psi'|^{Q\gamma}d\lambda,\lim\limits_{\epsilon\to 0}\int_{\mathds{D}}f\circ\psi e^{\frac{\gamma}{2}(X_{\partial\mathds{D},\epsilon}+m_\partial(X_{\partial\mathds{D}}\circ\psi))}|\psi'|^{\frac{Q\gamma}{2}}d\lambda_\partial)
\end{align*}
\end{lemma}
\noindent {\it Proof of Lemma \ref{mobiusmeasure}.} Using Proposition \ref{circlegreen}, we have that
\begin{align*}
\lim\limits_{\epsilon\to 0}\mathds{E}[X_{\partial\mathds{D},\epsilon}(\psi(x))^2]-\mathds{E}[(X_{\partial\mathds{D}}\circ\psi)_{\frac{\epsilon}{|\phi'(x)|}}(x)^2]=0
\end{align*}
on $\mathds{D}$ and on $\partial\mathds{D}$.\\
As $|\phi'(x)|$ is always larger than a constant that is strictly positive, we can use the result in \cite{shamov} to show that the measures
$$(\frac{\epsilon}{|\phi'|})^{\frac{\gamma^2}{2}}e^{\gamma X_{\partial\mathds{D},\epsilon}\circ\psi}d\lambda$$
and
$$\epsilon^{\frac{\gamma^2}{2}}e^{\gamma(X_{\partial\mathds{D}}\circ\psi)_{\epsilon}}d\lambda$$
converge in probability to the same random measure on $\mathds{D}$.\\
Similarly,
$$(\frac{\epsilon}{|\phi'|})^{\frac{\gamma^2}{4}}e^{\frac{\gamma}{2} X_{\partial\mathds{D},\epsilon}\circ\psi}d\lambda_{\partial}$$
and
$$\epsilon^{\frac{\gamma^2}{4}}e^{\frac{\gamma}{2}(X_{\partial\mathds{D}}\circ\psi)_{\epsilon}}d\lambda_{\partial}$$
converge in probability to the same limit measure on $\partial\mathds{D}$.\\
We also have, by change of variables in the integrand
$$\int_{\mathds{D}}f~\epsilon^{\frac{\gamma^2}{2}}e^{\gamma X_{\partial\mathds{D},\epsilon}}d\lambda=\int_{\mathds{D}}f\circ\psi ~\epsilon^{\frac{\gamma^2}{2}}e^{\gamma X_{\partial\mathds{D},\epsilon}\circ\psi}|\psi'|^2 d\lambda=\int_{\mathds{D}}f\circ\psi ~(\frac{\epsilon}{|\psi'|})^{\frac{\gamma^2}{2}}e^{\gamma X_{\partial\mathds{D},\epsilon}\circ\psi}|\psi'|^{Q\gamma}d\lambda$$
and similarly
$$\int_{\mathds{D}}f~\epsilon^{\frac{\gamma^2}{4}}e^{\frac{\gamma}{2} X_{\partial\mathds{D},\epsilon}}d\lambda_\partial=\int_{\mathds{D}}f\circ\psi ~\epsilon^{\frac{\gamma^2}{4}}e^{\frac{\gamma}{2} X_{\partial\mathds{D},\epsilon}\circ\psi}|\psi'| d\lambda_\partial=\int_{\mathds{D}}f\circ\psi ~(\frac{\epsilon}{|\psi'|})^{\frac{\gamma^2}{4}}e^{\frac{\gamma}{2} X_{\partial\mathds{D},\epsilon}\circ\psi}|\psi'|^{\frac{Q\gamma}{2}}d\lambda_\partial$$
Combining the above arguments, we conclude the proof by recalling the change of metric formula
\begin{equation}\label{eq:changeofmetric}
X_{\partial\D} \circ \psi - m_\partial (X_{\partial\D} \circ \psi) \overset{law}{=} X_{\partial\D},
\end{equation}
which can be verified using the definition of $m_\partial$ and the Green function.\qed 

\medskip
Anticipating the formula (\ref{eq:changeofmetric}), we use the change of variables $\overline{c}=c+m_\partial(X_\partial \circ \psi)$ to write
\begin{align*}
&\Pi_{\gamma,\mu_{\partial},\mu}^{(\psi(z_i),\alpha_i)_i,(\psi(s_j),\beta_j)_j}(dx^2,F)\\
=&\lim_{\epsilon\to 0}\int_\R e^{\big(\sum_i\alpha_i+\frac{1}{2}\sum_j\beta_j-Q\big)(\overline{c}-m_\partial(X_{\partial\D}\circ\psi))} \E\Big[F( X_{\partial\D,\epsilon}+\overline{c}-m_\partial(X_{\partial\D}\circ\psi))\prod_i\epsilon^{\frac{\alpha_i^2}{2}}e^{\alpha_i X_{\partial \D,\epsilon}(\psi(z_i))} \\
 &\prod_j \epsilon^{\frac{\beta_j^2}{4}}e^{\frac{\beta_j }{2} X_{\partial \D,\epsilon} (\psi(s_j))}\exp\Big(-\mu e^{\gamma \overline{c}}\epsilon^{\frac{\gamma^2}{2}}\int_{\D}e^{\gamma  (X_{\partial \D,\epsilon} - m_\partial(X_{\partial\D}\circ\psi))}\,d\lambda    - \mu_{\partial} e^{\frac{\gamma}{2} \overline{c}}\epsilon^{\frac{\gamma^2}{4}}\int_{\D}e^{\frac{\gamma}{2} (X_{\partial \D,\epsilon} - m_\partial(X_{\partial\D}\circ\psi)) }\,d\lambda_{\partial g} \Big) \Big]\,dc.\nonumber
\end{align*}
We now apply the Girsanov transform to the factor $e^{Q m_\partial(X_{\partial\D}\circ\psi)}$. This will shift the law of the field $X_{\partial\D}$, which becomes
$$X_{\partial\D}+\frac{Q}{2\pi}\int_{\partial\D}G (\cdot,\psi(z))\lambda_\partial(dz)$$
We now introduce a useful constant in the following calculation  
$$D_\psi=\int_{\partial\D}\int_{\partial\D} G (\psi(y),\psi(z))\lambda_\partial(dy)\lambda_\partial(dz)=4\pi^2 \mathds{E}[m_\partial(X_{\partial\D}\circ\psi)^2].$$
We also introduce the function
$$H(y)=\int_{\partial\D} G(\psi(y),\psi(z))\lambda_\partial(dz)$$
so that $D_\psi=\int_{\partial\D} H(y)\lambda_\partial(dy)$. Recall that $\int_{\partial\D} G(y,z)\lambda_\partial(dz)=0$ for all $y$.

Under the Girsanov transform $X_{\partial\D}(x)-m_\partial(X_{\partial\D}\circ\psi)$ becomes $X_{\partial\D}(x)-m_\partial(X_{\partial\D}\circ\psi)+\frac{Q}{2\pi}H(\psi^{-1}(x))-\frac{Q}{4\pi^2}D_\psi$ and we get
\begin{align*}
&\Pi_{\gamma,\mu_{\partial},\mu}^{(\psi(z_i),\alpha_i)_i,(\psi(s_j),\beta_j)_j}(dx^2,F)\\
=&\lim_{\epsilon\to 0}e^{\frac{Q^2}{8\pi}D_\psi}\int_\R e^{\big(\sum_i\alpha_i+\frac{1}{2}\sum_j\beta_j-Q\big)\overline{c}} \E\Big[F( X_{\partial\D,\epsilon}+\overline{c}-m_\partial(X_{\partial\D}\circ\psi)+\frac{Q}{2\pi}H(\psi^{-1}(\cdot))-\frac{Q}{4\pi^2}D_\psi) \nonumber\\
 &\prod_i\epsilon^{\frac{\alpha_i^2}{2}}e^{\alpha_i(X_{\partial \D,\epsilon}(\psi(z_i))-m_\partial(X_{\partial\D}\circ\psi)+\frac{Q}{2\pi}H(z_i)-\frac{Q}{4\pi^2}D_\psi)} \prod_j \epsilon^{\frac{\beta_j^2}{4}}e^{\frac{\beta_j }{2}(X_{\partial \D,\epsilon}(\psi(s_j))-m_\partial(X_{\partial\D}\circ\psi)+\frac{Q}{2\pi}H(s_j)-\frac{Q}{4\pi^2}D_\psi)}\\
 &\exp\Big(-\mu e^{\gamma \overline{c}}\epsilon^{\frac{\gamma^2}{2}}\int_{\D}e^{\gamma  (X_{\partial\D}(x)-m_\partial(X_{\partial\D}\circ\psi)+\frac{Q}{2\pi}H(\psi^{-1}(x))-\frac{Q}{4\pi^2}D_\psi)}\,d\lambda   \\
 & - \mu_{\partial} e^{\frac{\gamma}{2} \overline{c}}\epsilon^{\frac{\gamma^2}{4}}\int_{\D}e^{\frac{\gamma}{2} (X_{\partial\D}(x)-m_\partial(X_{\partial\D}\circ\psi)+\frac{Q}{2\pi}H(\psi^{-1}(x))-\frac{Q}{4\pi^2}D_\psi) }\,d\lambda_{\partial g} \Big) \Big]\,dc.\nonumber
\end{align*}
Notice the relation (consequence of (\ref{greenpsi}))
$$\frac{Q}{2\pi}H(x)=Q\ln{\frac{1}{|\psi'(x)|}}+\frac{Q}{8\pi^2}D_\psi$$
the $D_\psi$ part with cancel out the first exponential term in the above expression when we do the change of variables $c=\overline{c}-\frac{Q}{8\pi^2}D_\psi$.\\
Now using \eqref{eq:changeofmetric}, \eqref{law} and Lemma \ref{mobiusmeasure}, we finally have
\begin{align*}
&\Pi_{\gamma,\mu_{\partial},\mu}^{(\psi(z_i),\alpha_i)_i,(\psi(s_j),\beta_j)_j}(dx^2,F)\\
=&\prod_i|\psi'(z_i)|^{\alpha^2/2-\alpha Q}\prod_j|\psi'(s_j)|^{\beta^2/4-\beta Q/2} \nonumber\\
 &\lim_{\epsilon\to 0}\int_\R e^{\big(\sum_i\alpha_i+\frac{1}{2}\sum_j\beta_j-Q\big)c} \E\Big[F( X_{\partial\D,\epsilon}\circ\psi^{-1}-Q\ln|\psi'(\psi^{-1}(\cdot))|+c)\prod_i\epsilon^{\frac{\alpha_i^2}{2}}e^{\alpha_i(X_{\partial \D,\epsilon}(z_i))} 
 \\
&\prod_j \epsilon^{\frac{\beta_j^2}{4}}e^{\frac{\beta_j }{2}(X_{\partial \D,\epsilon}(s_j))}
\exp\Big(-\mu e^{\gamma c}\epsilon^{\frac{\gamma^2}{2}}\int_{\D}e^{\gamma  X_{\partial \D,\epsilon}}\,d\lambda    - \mu_{\partial} e^{\frac{\gamma}{2} c}\epsilon^{\frac{\gamma^2}{4}}\int_{\D}e^{\frac{\gamma}{2} X_{\partial \D,\epsilon} }\,d\lambda_{\partial g} \Big) \Big]\,dc.\nonumber
\end{align*}
This completes the proof of the theorem.
\end{proof}

\subsection{Conformal changes of domains}
In this section, we explain how to construct the LQFT on domains that are conformally equivalent to the unit disk. Basically, the idea is to find a conformal map sending this domain to the unit disk and to use the conformal covariance property of the LQFT.

Let $D$ be a simply connected (strict) domain of $\C$, say with a $C^1$ Jordan boundary. From the Riemann mapping theorem, we can consider a conformal map $\psi:D\to \D$. If we further consider marked points $(z_i,\alpha_i)$ in $D$ and boundary marked points $(s_j,\beta_j)_j$ in $\partial D$, they will be sent respectively to $(\psi(z_i),\alpha_i)$ in $\D$ and to the boundary marked points $(\psi(s_j),\beta_j)_j$ in $\partial \D$.   Finally, the uniformization theorem tells us that there is no restriction if we assume that $D$ is equipped with a metric of the type $g_\psi=|\psi'|^2g(\psi)$ for some metric $g$ on $\D$. 

The Liouville partition function on $(D,g_\psi)$ applied to a functional $F$ reads
\begin{align}\label{eq:defD0}
&\Pi_{ \gamma,\mu_{\partial  },\mu}^{(z_i,\alpha_i)_i,(s_j,\beta_j)_j}  (D,g_\psi,F)\\
 =&e^{  \frac{1}{96\pi}\Big(\int_{\D}|\partial \ln g|^2\,d\lambda+\int_{\partial \D} 4 \ln g\,d\lambda_{\partial} \Big)} \lim_{\epsilon\to 0}\int_\R\E\Big[F( X_{\nu }+c+Q/2\ln g_\psi)\prod_i \epsilon^{\frac{\alpha_i^2}{2}}e^{\alpha_i (c+X_{\nu,\epsilon}+Q/2\ln g_\psi)(z_i)}\nonumber\\
&\prod_j \epsilon^{\frac{\beta_j^2}{4}}e^{\frac{\beta_j }{2}(c+X_{\nu,\epsilon}+Q/2\ln g_\psi)(s_j)}\exp\Big( - \frac{Q}{4\pi}\int_{D}R_{g_\psi} (c+X_{\nu })  \,d\lambda_{g_\psi} - \mu e^{\gamma c}\epsilon^{\frac{\gamma^2}{2}}\int_{D}e^{\gamma (X_{\nu,\epsilon}+ Q/2\ln g_\psi) }\,d\lambda \Big)\nonumber \\
&\exp\Big( - \frac{Q}{2\pi}\int_{\partial  D}K_{\partial g_\psi}  (c+X_{\nu})  \,d\lambda_{\partial g_\psi} - \mu_\partial \,e^{\frac{\gamma}{2} c}\epsilon^{\frac{\gamma^2}{4}}\int_{\partial D}e^{\frac{\gamma}{2} (X_{\nu,\epsilon}+ Q/2\ln g_\psi) }\,d\lambda_{\partial } \Big) \Big]\,dc,\nonumber %
\end{align}
where $X_\nu$ is a GFF on $D$ with Neumann boundary condition and vanishing $\nu$-mean. By shift invariance of the Lebesgue measure, the choice of $\nu$ is irrelevant  and it will be convenient to take $X_\nu= X_{\partial \D}\circ \psi$, which is free boundary GFF with vanishing mean for the line element on $\partial D$ in the metric $|\psi'|^2dx^2$ on $D$.  

\begin{proposition}\label{CCD}
Let $D$ be a simply connected (strict) domain of $\C$ with a  $C^1$ Jordan boundary. Then we have the relation
\begin{align*}
\Pi_{ \gamma,\mu_{\partial  },\mu}^{(z_i,\alpha_i)_i,(s_j,\beta_j)_j} & (D,g_\psi,F(\phi,Z,Z_{\partial}))\\=&\prod_i|\psi'(z_i)|^{2\triangle_{\alpha_i}} \prod_j|\psi'(s_j)|^{\triangle_{\beta_j}} \Pi_{ \gamma,\mu_{\partial  },\mu}^{(\psi(z_i),\alpha_i)_i,(\psi(s_j),\beta_j)_j}  (\D,g,F(\phi\circ \psi+Q\ln |\psi'|,Z\circ \psi,Z_{\partial}\circ \psi) ).
\end{align*}
In particular:
\begin{enumerate}
\item we have the following relation between the partition functions ($F=1$)
$$\Pi_{ \gamma,\mu_{\partial  },\mu}^{(z_i,\alpha_i)_i,(s_j,\beta_j)_j}  (D,g_\psi,1)=\prod_i|\psi'(z_i)|^{2\triangle_{\alpha_i}} \prod_j|\psi'(s_j)|^{\triangle_{\beta_j}} \Pi_{ \gamma,\mu_{\partial  },\mu}^{(\psi(z_i),\alpha_i)_i,(\psi(s_j),\beta_j)_j}  (\D,g,1).$$
\item The law of the triple $(\phi,Z,Z_{\partial})$ under $\P_{\gamma,\mu_{\partial},\mu,(D,g_\psi)}^{(z_i,\alpha_i)_i,(s_j,\beta_j)_j} $ is the same as $(\phi\circ \psi+Q\ln|\psi'|,Z,Z_{\partial})$ under $\P_{\gamma,\mu_{\partial},\mu,(\D,g)}^{(\psi(z_i),\alpha_i)_i,(\psi(s_j),\beta_j)_j}$.
\end{enumerate}
\end{proposition}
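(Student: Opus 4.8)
The plan is to prove the displayed identity for every bounded continuous $F$; statements (1) and (2) then follow by specializing to $F=1$ and by reading off the joint law of the triple from the identity. The entire argument is the change of variables $y=\psi(x)$ in the defining expression \eqref{eq:defD0}, organized around two facts: $\psi$ is an isometry from $(\overline{D},g_\psi)$ onto $(\overline{\D},g)$ (indeed $g_\psi=\psi^*g$), and $X_\nu=X_{\partial\D}\circ\psi$ is literally the pullback by $\psi$ of the Neumann GFF on $\D$. First I would record the pointwise identity
$$c+X_\nu+\tfrac{Q}{2}\ln g_\psi=\big(c+X_{\partial\D}+\tfrac{Q}{2}\ln g\big)\circ\psi+Q\ln|\psi'|,$$
coming from $\ln g_\psi=2\ln|\psi'|+\ln g\circ\psi$; this produces the field transformation $\phi\mapsto\phi\circ\psi+Q\ln|\psi'|$ inside $F$. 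Note also that the Weyl prefactor in \eqref{eq:defD0} is written with the metric $g$ on $\D$, so it coincides with the prefactor of $\Pi^{\ldots}(\D,g,\cdot)$ and cancels from both sides.

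Next I would dispose of the geometric terms. Scalar and geodesic curvature are isometry invariants, so $R_{g_\psi}=R_g\circ\psi$ and $K_{g_\psi}=K_g\circ\psi$, while $\psi$ carries $\lambda_{g_\psi},\lambda_{\partial g_\psi}$ to $\lambda_g,\lambda_{\partial g}$; hence $\int_D R_{g_\psi}(c+X_\nu)\,d\lambda_{g_\psi}$ and $\int_{\partial D}K_{g_\psi}(c+X_\nu)\,d\lambda_{\partial g_\psi}$ turn, after $y=\psi(x)$ and $X_\nu\circ\psi^{-1}=X_{\partial\D}$, into exactly the curvature integrals appearing on $\D$. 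The only non-covariant ingredient is the Euclidean circle-average regularization, which is not preserved by $\psi$; this is the sole source of the $|\psi'|$ factors. For it I would use the local scaling relation behind Lemma \ref{mobiusmeasure} and Proposition \ref{circlegreen}, which extends to a general conformal $\psi\colon D\to\D$ since it depends only on $\psi(x+\epsilon e^{i\theta})=\psi(x)+\epsilon\psi'(x)e^{i\theta}+o(\epsilon)$: in the sense of variance asymptotics,
$$(X_{\partial\D}\circ\psi)_\epsilon(x)=X_{\partial\D,\epsilon|\psi'(x)|}(\psi(x))+o(1)\qquad(\epsilon\to0),$$
both in the bulk and on the boundary.

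Applying this to an insertion, the renormalization $\epsilon^{\alpha_i^2/2}$ together with the scale change $\epsilon\mapsto\epsilon|\psi'(z_i)|$ yields $|\psi'(z_i)|^{-\alpha_i^2/2}$, while the shift $Q\ln|\psi'|$ contributes $|\psi'(z_i)|^{\alpha_iQ}$; their product is $|\psi'(z_i)|^{2\Delta_{\alpha_i}}$ because $2\Delta_{\alpha_i}=\alpha_iQ-\alpha_i^2/2$. The boundary insertions, carrying $\epsilon^{\beta_j^2/4}$ and the boundary scaling, give $|\psi'(s_j)|^{-\beta_j^2/4}\,|\psi'(s_j)|^{\beta_jQ/2}=|\psi'(s_j)|^{\Delta_{\beta_j}}$. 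For the interaction measures I would argue as in Lemma \ref{mobiusmeasure}: by the scaling relation and the uniqueness theorem of \cite{shamov}, $\epsilon^{\gamma^2/2}e^{\gamma(X_{\partial\D}\circ\psi)_\epsilon}\,d\lambda$ and $(\epsilon/|\psi'|)^{\gamma^2/2}e^{\gamma X_{\partial\D,\epsilon}\circ\psi}\,d\lambda$ share the same limit, and after $y=\psi(x)$ the residual power of $|\psi'|$ in the bulk is $\gamma Q-2-\gamma^2/2=0$ (and $\tfrac12$ of that on $\partial D$) precisely because $Q=\tfrac{\gamma}{2}+\tfrac{2}{\gamma}$. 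Thus the bulk and boundary interaction terms are invariant, and simultaneously the Liouville measures satisfy $Z=Z\circ\psi$ and $Z_\partial=Z_\partial\circ\psi$.

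Collecting everything reproduces the $\D$-integrand with marked points $\psi(z_i),\psi(s_j)$, the functional evaluated at $(\phi\circ\psi+Q\ln|\psi'|,Z\circ\psi,Z_\partial\circ\psi)$, and the prefactor $\prod_i|\psi'(z_i)|^{2\Delta_{\alpha_i}}\prod_j|\psi'(s_j)|^{\Delta_{\beta_j}}$, which is the claim; here the $c$-integral is unaffected by the (harmless) mean $m_\partial(X_{\partial\D}\circ\psi)$ thanks to translation invariance of Lebesgue measure. The main obstacle is this regularization step: justifying the scaling relation and the convergence of the regularized insertions and GMC measures \emph{uniformly enough} to interchange $\lim_{\epsilon\to0}$ with the $c$-integral and the expectation. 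This is most delicate near $\partial D$, where the bulk GMC is only barely integrable (cf.\ Proposition \ref{finitemeas} and the boundary analysis in the proof of Theorem \ref{th:seiberg}) and where one additionally needs boundary regularity of $\psi$, namely that $|\psi'|$ is continuous and bounded away from $0$ up to $\partial D$, for the half-circle averages to rescale correctly.
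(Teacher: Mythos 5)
Your proof is correct and follows essentially the same route as the paper's: a direct change of variables $y=\psi(x)$ in \eqref{eq:defD0} with $X_\nu=X_{\partial\D}\circ\psi$, using isometry invariance of the curvature terms, the rescaling of the circle-average regularization as in Lemma \ref{mobiusmeasure}, and the identity $\gamma Q=2+\tfrac{\gamma^2}{2}$ to cancel the residual Jacobian powers, with the conformal-weight bookkeeping matching the paper's display \eqref{eq:defD2}. Your closing caveat about uniform convergence near $\partial D$ and the boundary behaviour of $|\psi'|$ is a fair observation on the one point the paper glosses over, since Lemma \ref{mobiusmeasure} is stated only for M\"obius transforms of the disk.
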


\noindent {\it Proof.} Again we only treat a functional $F$ depending only on the Liouville field for simplicity. Applying Lemma~\ref{mobiusmeasure} and using that $R_{g_\psi}(x)=R_g(\psi(x))$ and $K_{g_\psi}(x)= K_{g}(\psi(x))$ (because $\psi$ is a conformal map), \eqref{eq:defD0} is equal to
\begin{align}\label{eq:defD2}
&\Pi_{ \gamma,\mu_{\partial  },\mu}^{(z_i,\alpha_i)_i,(s_j,\beta_j)_j}  (D,g_\psi,F)\nonumber\\
 =&e^{  \frac{1}{96\pi}\Big(\int_{\D}|\partial \ln g|^2\,d\lambda+\int_{\partial \D}4\ln g\,d\lambda_{\partial} \Big)} \prod_i|\psi'(z_i)|^{Q\alpha_i-\frac{\alpha_i^2}{2}} \prod_j|\psi'(s_j)|^{Q\frac{\beta_j}{2}-\frac{\beta_j^2}{4}}\nonumber\\
 & \lim_{\epsilon\to 0}\int_\R\E\Big[F( (X_{\partial \D }+c+Q/2\ln g)\circ \psi+ Q\ln |\psi'|) \nonumber\\
 &\prod_i \epsilon^{\frac{\alpha_i^2}{2}}e^{\alpha_i (c+X_{\partial \D,\epsilon}+Q/2\ln g)(\psi(z_i))}  \prod_j \epsilon^{\frac{\beta_j^2}{4}}e^{\frac{\beta_j }{2}(c+X_{\partial \D,\epsilon}+Q/2\ln g)(\psi(s_j))}\nonumber\\
&\exp\Big( - \frac{Q}{4\pi}\int_{D}R_{g}(\psi)(c+X_{\partial \D }\circ \psi) g(\psi)|\psi'|^2 \,d\lambda  - \mu e^{\gamma c}\epsilon^{\frac{\gamma^2}{2}}\int_{\D}e^{\gamma (X_{\partial \D,\epsilon}+ Q/2\ln g )}\,d\lambda \Big) \nonumber\\
&\exp\Big( - \frac{Q}{2\pi}\int_{\partial D}K_{g} (\psi)(c+X_{\partial \D}\circ \psi) |\psi'| g^{1/2}(\psi)\,d\lambda_{\partial } - \mu_\partial e^{\frac{\gamma}{2} c}\epsilon^{\frac{\gamma^2}{4}}\int_{\partial \D}e^{\frac{\gamma}{2}( X_{\partial \D,\epsilon}+ Q/2\ln g) }\,d\lambda_{\partial } \Big) \Big]\,dc.\nonumber\\ 
= & \prod_i|\psi'(z_i)|^{2\triangle_{\alpha_i}} \prod_j|\psi'(s_j)|^{\triangle_{\beta_j}} \Pi_{ \gamma,\mu_{\partial  },\mu}^{(\psi(z_i),\alpha_i)_i,(\psi(s_j),\beta_j)_j}  (\D,g,F(\phi \circ \psi+Q\ln |\psi'|)).
\end{align} 
This completes the proof.\qed


\subsection{Law of the volume of space/boundary}\label{joint}
We want to express here the (joint) law of the volume of bulk/boundary on the unit disk equipped with the Euclidean metric. It will be convenient to express this law in terms of the  couple of random measures $(Z_0,Z^{\partial}_0)$ under $\P$  respectively defined on $\D$ and $\partial \D$ by (recall Proposition \ref{law})
\begin{equation}
Z_0=e^{\gamma H }e^{\gamma X_{\partial \D}}\,d\lambda,\quad Z^{\partial}_0=e^{ \frac{\gamma }{2}H} e^{\frac{\gamma }{2} X_{\partial \D}}\,d\lambda_{\partial}
\end{equation}
with
\begin{equation}
H(x)= \sum_i\alpha_iG(x,z_i) + \sum_j \frac{ \beta_j}{2} G(x,s_j).
\end{equation}
We further introduce the ratio
$$R=\frac{Z_0(\D)}{Z_0^\partial(\partial\D)^2}.$$
By definition of the law of the bulk/boundary Liouville measures, we have
 \begin{align*}
  &  \E_{\gamma,\mu_{\partial  },\mu,dx^2}^{(z_i,\alpha_i)_i,(s_j,\beta_j)_j}  \big[ F  (Z, Z_{\partial})\big]\\\
 =& (\Pi_{\gamma,\mu_{\partial  },\mu}^{(z_i,\alpha_i)_i,(s_j,\beta_j)_j}(dx^2,1))^{-1} \int_\R e^{\big(\sum_i\alpha_i+\frac{1}{2}\sum_j\beta_j-Q\big)c} \E\Big[F( e^{\gamma c}Z_0,e^{\frac{\gamma}{2} c}Z^{\partial}_0  \big) \nonumber\\
 & \exp\Big(  - \mu e^{\gamma c}Z_0(\D)   - \mu_{\partial} e^{\frac{\gamma}{2} c} Z_0^{\partial}(\partial\D)\Big)  \Big]\,dc\\
=&\frac{2}{\gamma} (\Pi_{\gamma,\mu_{\partial  },\mu}^{(z_i,\alpha_i)_i,(s_j,\beta_j)_j}(dx^2,1))^{-1} \int_0^{\infty} y^{ \frac{2}{\gamma} (\sum_i\alpha_i+\frac{1}{2}\sum_j\beta_j-Q ) -1 } \nonumber\\
& \E\Big[F\Big( y^2 R\frac{Z_0}{Z_0(\D)},y\frac{ Z^{\partial}_0}{Z^{\partial}_0(\partial\D) }  \Big)  \exp\Big(  - \mu y^2R   - \mu_{\partial}  y\Big) Z^{\partial}_0(\partial\D)^{- \frac{2}{\gamma} (\sum_i\alpha_i+\frac{1}{2}\sum_j\beta_j-Q )} \Big]\,dy.
\end{align*} 
This is the general formula. It may be useful to state as a particular example the case $\mu_\partial=0$ as it often arises in the study of random planar maps with a boundary.
\begin{corollary}\label{unitvolume}
Assume $\mu_\partial=0$ and $\mu>0$. The joint law of the bulk/boundary Liouville measures is given by 
\begin{align*}
&  \E_{\gamma,\mu_{\partial  }=0,\mu,dx^2}^{(z_i,\alpha_i)_i,(s_j,\beta_j)_j}  \big[ F  (Z, Z_{\partial})\big]\\
={}&\mathcal{Z}^{-1} \int_\R u^{\frac{1}{\gamma}\big(\sum_i\alpha_i+\frac{1}{2}\sum_j\beta_j-Q\big)-1}  \E\Big[F\Big(u\frac{Z_0}{Z_0(\D)},u^{\frac{1}{2}}\frac{ Z^{\partial}_0}{Z_0( \D)^{\frac{1}{2}} }  \Big)  Z_0( \D)^{- \frac{1}{\gamma} (\sum_i\alpha_i+\frac{1}{2}\sum_j\beta_j-Q )} \Big]e^{-\mu u}\,du.
\end{align*} 
where $\mathcal{Z}$ is a renormalization constant to get a probability measure. In particular, the law of the volume of space follows a Gamma law with parameters $\big(\frac{\sum_i\alpha_i+\frac{1}{2}\sum_j\beta_j-Q}{\gamma},\mu\big)$ and the random variable  $Z(\D)$ is independent of the random measures $(\frac{Z }{Z(\D) },\frac{Z_{\partial}}{Z(\D)^{\frac{1}{2}}})$.
\end{corollary}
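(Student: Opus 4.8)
The plan is to start from the defining formula for the joint law of $(Z,Z_\partial)$ under $\P_{\gamma,\mu_{\partial},\mu,dx^2}^{(z_i,\alpha_i)_i,(s_j,\beta_j)_j}$ displayed at the beginning of this subsection, specialize it to $\mu_\partial=0$, and then perform a single change of variables in the cosmological integration variable $c$ adapted to the \emph{bulk} volume (rather than the boundary volume used in the general formula above). Writing $s:=\sum_i\alpha_i+\tfrac{1}{2}\sum_j\beta_j-Q$ and $\Pi:=\Pi_{\gamma,0,\mu}^{(z_i,\alpha_i)_i,(s_j,\beta_j)_j}(dx^2,1)$, the starting point reads
$$\E_{\gamma,0,\mu,dx^2}^{(z_i,\alpha_i)_i,(s_j,\beta_j)_j}\big[F(Z,Z_\partial)\big]=\frac{1}{\Pi}\int_\R e^{sc}\,\E\Big[F\big(e^{\gamma c}Z_0(dx),e^{\frac{\gamma}{2}c}Z^{\partial}_0(dx)\big)\,e^{-\mu e^{\gamma c}Z_0(\D)}\Big]\,dc,$$
the boundary interaction term having dropped out precisely because $\mu_\partial=0$. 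Note that $\mu_\partial=0$ forces $\mu>0$ by \eqref{def:param}, so we are in case~1 of Theorem~\ref{th:seiberg}; thus $\Pi$ is finite and positive, and the Seiberg bound \eqref{bounds1} gives $s>0$.

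The main step is to substitute $u=e^{\gamma c}Z_0(\D)$ inside the expectation, which is a genuine bijection $c\in\R\leftrightarrow u\in(0,\infty)$ for almost every realization since $0<Z_0(\D)<+\infty$ a.s.\ by Proposition~\ref{finitemeas}. One has $c=\tfrac{1}{\gamma}\ln(u/Z_0(\D))$ and $dc=\tfrac{1}{\gamma}\,du/u$, whence $e^{sc}\,dc=\tfrac{1}{\gamma}u^{s/\gamma-1}Z_0(\D)^{-s/\gamma}\,du$, while $e^{\gamma c}Z_0(dx)=u\,Z_0(dx)/Z_0(\D)$, $\ e^{\frac{\gamma}{2}c}Z^{\partial}_0(dx)=u^{1/2}Z^{\partial}_0(dx)/Z_0(\D)^{1/2}$, and $\mu e^{\gamma c}Z_0(\D)=\mu u$. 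Collecting these substitutions reproduces exactly the claimed formula, with the normalization constant $Z=\gamma\Pi$ (equivalently, $Z$ is fixed by taking $F\equiv1$, which also shows $\E[Z_0(\D)^{-s/\gamma}]<\infty$ as a consequence of $\Pi<\infty$).

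For the two distributional assertions, the key observation is that the integration variable $u$ is literally the total bulk mass: if $Z(dx)=u\,Z_0(dx)/Z_0(\D)$ then $Z(\D)=u$, whereas the normalized measures $Z(dx)/Z(\D)=Z_0(dx)/Z_0(\D)$ and $Z_\partial(dx)/Z(\D)^{1/2}=Z^{\partial}_0(dx)/Z_0(\D)^{1/2}$ carry no dependence on $u$ at all. Consequently, testing against a product functional $F(Z,Z_\partial)=G(Z(\D))\,H\big(Z(dx)/Z(\D),Z_\partial(dx)/Z(\D)^{1/2}\big)$ splits the right-hand side into the product of the scalar integral $Z^{-1}\int_0^\infty u^{s/\gamma-1}G(u)e^{-\mu u}\,du$ and the expectation $\E\big[H\big(Z_0(dx)/Z_0(\D),Z^{\partial}_0(dx)/Z_0(\D)^{1/2}\big)Z_0(\D)^{-s/\gamma}\big]$. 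This factorization is exactly the statement that $Z(\D)$ is independent of the pair of normalized measures, and the first factor identifies the law of $Z(\D)$ as the Gamma distribution with shape parameter $s/\gamma=\tfrac{1}{\gamma}(\sum_i\alpha_i+\tfrac{1}{2}\sum_j\beta_j-Q)$ and rate $\mu$; positivity of $s$ ensures integrability at $u=0$.

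There is essentially no hard step here: the whole argument is one transparent change of variables followed by a factorization. The only points requiring care are purely bookkeeping: verifying that the deterministic prefactor $\E[Z_0(\D)^{-s/\gamma}]$ is absorbed consistently into $Z$, and checking the almost-sure positivity and finiteness of $Z_0(\D)$ so that the map $c\mapsto u=e^{\gamma c}Z_0(\D)$ is a bijection onto $(0,\infty)$ for almost every realization, which is precisely what Proposition~\ref{finitemeas} provides.
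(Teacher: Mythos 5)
Your proposal is correct and follows essentially the same route as the paper: the general formula of this subsection is obtained by the change of variables adapted to the boundary volume, and the corollary is the identical computation with the substitution $u=e^{\gamma c}Z_0(\D)$ adapted to the bulk volume (equivalently, one can set $u=y^2R$ in the paper's general formula), after which the factorization over product functionals yields the Gamma law and the independence statement exactly as you describe. The only cosmetic remark is that Proposition~\ref{finitemeas} gives a.s.\ finiteness of $Z_0(\D)$, while its a.s.\ positivity comes from the non-degeneracy of the chaos measure in Proposition~\ref{law}; both are needed for your bijection $c\mapsto u$, as you correctly note.
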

\noindent If we further condition the total bulk measure to be $1$, the unit volume Liouville measure on the disk as in Corollary~\ref{unitvolume} can be defined when the following three conditions are satisfied
\begin{equation}
\forall i,\quad \alpha_i<Q,
\end{equation}
\begin{equation}
\forall j,\quad \beta_j<Q,
\end{equation}
\begin{equation}\label{unitvolumeseiberg}
Q-\sum_i\alpha_i-\frac{1}{2}\sum_j\beta_j<\frac{2}{\gamma}\wedge2\min\limits_{i}(Q-\alpha_i)\wedge\min\limits_{j}(Q-\beta_j),
\end{equation}
see Corollary~\ref{relaxseiberg} for a precise statement. This shows that the Seiberg bounds \eqref{bounds1}+\eqref{bounds2}+\eqref{bounds3} can be relaxed when conditioning on finite total volume.
\begin{remark}
We can also look at what happens when we set $\mu=0$ and condition the total boundary Liouville length measure to be $1$. In this case, one can treat bulk insertions and boundary insertions seperately as in \cite[Section 3.4]{DKRV} to obtain similar relaxed Seiberg bounds without additional technical difficulties. For completeness we state the result in the following corollary.
\end{remark}
\begin{corollary}\label{unitlength}
Assume $\mu=0$ and $\mu_\partial>0$. The joint law of the bulk/boundary Liouville measures is given by
\begin{align*}
&  \E_{\gamma,\mu_{\partial},\mu=0,dx^2}^{(z_i,\alpha_i)_i,(s_j,\beta_j)_j}  \big[ F  (Z, Z_{\partial})\big]\\
={}&\mathcal{Z}^{-1} \int_\R y^{\frac{2}{\gamma}\big(\sum_i\alpha_i+\frac{1}{2}\sum_j\beta_j-Q\big)-1}  \E\Big[F\Big(y^2\frac{Z_0}{Z_0^{\partial}(\partial\D)^2},y\frac{ Z^{\partial}_0}{Z_0^\partial(\partial\D)}  \Big)  Z_0^\partial(\partial\D)^{- \frac{2}{\gamma} (\sum_i\alpha_i+\frac{1}{2}\sum_j\beta_j-Q )} \Big]e^{-\mu u}\,dy.
\end{align*} 
where $\mathcal{Z}$ is a renormalization constant to get a probability measure. In particular, the law of the total length of the boundary follows a Gamma law with parameters $\big(2\gamma^{-1}\big(\sum_i\alpha_i+\frac{1}{2}\sum_j\beta_j-Q\big),\mu\big)$ and the random variable $Z_{\partial}(\partial\D)$ is independent of the random measures $(\frac{Z }{Z_\partial(\partial\D)^2 },\frac{Z_{\partial}}{Z_\partial(\partial\D)})$.\\
The unit boundary length Liouville measure on the disk can be defined under the following conditions:
\begin{equation}
\forall j,\quad \beta_j<Q,
\end{equation}
\begin{equation}\label{unitlengthseiberg}
Q-\sum_i\alpha_i-\frac{1}{2}\sum_j\beta_j<\frac{2}{\gamma}\wedge\min\limits_{j}(Q-\beta_j),
\end{equation}
\end{corollary}
\begin{remark}
Since the geometrical KPZ formula established in \cite{Rnew10}  has been established almost surely with respect to the GFF expectation, it holds for the Liouville measure in our context almost surely too. 
\end{remark}

\section{Liouville QFT at $\gamma=2$}\label{sec:crit}
Here we explain how to construct LQFT on the unit disk in  the important   case $\gamma=2$. The reason why this case is so specific is that it is no more superrenormalizable at small scales. In other words the interaction terms $e^{2 X_{\partial \D} } d\lambda$ or $e^{ X_{\partial \D } } d\lambda_\partial$ can no more be obtained as a  Wick ordering, i.e. a subcritical Gaussian multiplicative chaos: it corresponds to the phase transition in Gaussian multiplicative chaos theory. Indeed,   the standard renormalizations 
$$\epsilon^2 \, e^{2 X_{\partial \D,\epsilon} } d\lambda  \quad \text{ and }\quad  \epsilon  \,e^{ X_{\partial \D,\epsilon} } d\lambda_\partial$$
yield vanishing limiting measures. To get a non trivial limit, an extra push $\sqrt{\ln \frac{1}{\epsilon}}$ is necessary, which is called the Seneta-Heyde norming. For Gaussian multiplicative chaos, this has been investigated in \cite{Rnew12} for a white noise decomposition of the GFF, which does not exactly correspond to our framework as we work with convolution cutoff approximations. So, we explain in this section how to generalize the results in \cite{Rnew12} to convolutions.

We first claim
\begin{theorem}\label{seneta1} 
The family of boundary approximation measures on $\partial \D$
\begin{equation*}
\sqrt{\ln \frac{1}{\epsilon}} \,\epsilon \, e^{ X_{\partial \D,\epsilon}   } d\lambda_\partial
\end{equation*}
converges in probability as $\epsilon$ goes to $0$ towards a non trivial limiting measure, which has moments of order $q$ for all $q<1$.
\end{theorem}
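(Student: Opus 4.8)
The plan is to reduce the statement to the Seneta--Heyde convergence already available in \cite{Rnew12} for fields possessing an exact white-noise (star-scale-invariant) decomposition, and then to transfer it to the circle-average approximation by a Gaussian comparison.

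First I would record the precise scaling on the boundary. Since $|1-x\bar y|=|x-y|$ for $x,y\in\partial\D$, the Neumann Green function restricts to $G(x,y)=2\ln\frac{1}{|x-y|}$, and Proposition \ref{circlegreen} gives $\E[X_{\partial\D,\epsilon}(x)^2]=2\ln\frac1\epsilon-1+o(1)$ uniformly on $\partial\D$. Hence $\epsilon\,e^{X_{\partial\D,\epsilon}}=e^{-1/2}\,e^{X_{\partial\D,\epsilon}-\frac12\E[X_{\partial\D,\epsilon}^2]}(1+o(1))$, which displays the boundary exponent $\frac\gamma2=1$ as exactly critical for a field with logarithmic singularity $2\ln\frac1{|x-y|}$: the naive normalization $\epsilon\,e^{X_{\partial\D,\epsilon}}$ converges to the zero measure, and $\sqrt{\ln\frac1\epsilon}$ is the correct extra push.

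Next I would introduce a reference Gaussian field $\hat X_\epsilon$ on $\partial\D$ with a star-scale-invariant cutoff and the same logarithmic singularity $2\ln\frac1{|x-y|}$, for which \cite{Rnew12} directly yields the convergence in probability of $\sqrt{\ln\frac1\epsilon}\,\epsilon\,e^{\hat X_\epsilon}d\lambda_\partial$ to a nontrivial critical chaos, together with the finiteness of moments of every order $q<1$ (and the blow-up of the mean, confirming that the normalization is genuinely critical). By construction the covariance kernel of $X_{\partial\D}$ differs from that of the limiting $\hat X$ by a bounded continuous function on $\partial\D\times\partial\D$, which is exactly the hypothesis required by Kahane's convexity inequality. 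Exactly as in the proof of Proposition \ref{finitemeas}, a two-sided covariance comparison then transfers the moment bounds to the circle-average measure, yielding the claimed moments of order $q<1$.

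The delicate point is upgrading this comparison of moments to an identification of limits, namely the convergence in probability of the circle-average family to the same (up to the continuous discrepancy) critical measure. I would couple $X_{\partial\D,\epsilon}$ and $\hat X_\epsilon$ on a common space and pass through the derivative martingale: one shows that each Seneta--Heyde normalized measure converges to the explicit constant of \cite{Rnew12} times the associated derivative-martingale measure, and that the two derivative martingales share a common limit via a Cameron--Martin/absolute-continuity argument fed by the bounded kernel discrepancy. The convergence of the derivative martingale itself rests on the standard truncation onto the event that the process $\delta\mapsto X_{\partial\D,\delta}(x)$ stays below a linear barrier of critical slope, which brings the normalized mass into a regime where a (modified) second-moment computation is admissible after comparison with $\hat X$, together with a control showing that the complementary bad set contributes negligibly. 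This truncation step is the main obstacle: at criticality the measure is \emph{not} in $L^2$---it carries only moments strictly below $1$---so the plain second-moment method fails, and the whole argument hinges on carrying the barrier estimates of \cite{Rnew12} through the convolution cutoff \emph{uniformly} in $\epsilon$, which is precisely the technical content absent in the white-noise setting.
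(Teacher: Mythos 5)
Your reduction to a star-scale-invariant reference field and the use of Kahane's inequality for the moment bounds of order $q<1$ are in the spirit of the paper, and your identification of the boundary exponent as exactly critical is correct. But the step you yourself flag as ``the main obstacle'' --- identifying the limit of the circle-average family with the limit of the white-noise family --- is where your plan has a genuine gap, and the mechanisms you propose for it would not work as stated. First, a bounded continuous discrepancy between two covariance kernels is exactly what Kahane's inequality needs for \emph{moment} comparisons, but it does not give mutual absolute continuity of the fields, nor a Cameron--Martin shift relating them; so ``the two derivative martingales share a common limit via a Cameron--Martin/absolute-continuity argument fed by the bounded kernel discrepancy'' is unjustified. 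What actually makes the transfer possible in the paper is that the specific difference kernel $(1-|e^{i\theta}-e^{i\theta'}|^{1/2})_{+}$ is positive definite, so that one has the \emph{exact additive decomposition} $X_{\partial\D}=\bar X+Y-m_\partial(\bar X+Y)$, with $\bar X$ a measurable function of a white noise $W$ carrying a genuine star-scale-invariant cutoff structure (so that \cite{Rnew12} applies verbatim and gives convergence in probability of the white-noise-cutoff measures to a limit $M'=F(W,Y)$) and $Y$ an independent smooth field. Second, the derivative-martingale/barrier route for the \emph{convolution} cutoff is precisely what is not available: circle averages do not come with the filtration and martingale structure on which the barrier and truncated second-moment arguments of \cite{LBMcrit,Rnew12} rely, and ``carrying the barrier estimates through the convolution cutoff uniformly in $\epsilon$'' would amount to redoing the whole critical-chaos construction from scratch rather than transferring it.

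The paper's actual transfer is different and worth internalizing: it interpolates $Z_\epsilon(t,\cdot)=\sqrt t\,X_{\partial\D,\epsilon}^1+\sqrt{1-t}\,X_\epsilon$ between an independent copy of the circle-average cutoff and the white-noise cutoff and applies the comparison estimate of \cite{vincent} to the $\alpha$-th moments of the difference of the two measures; the off-diagonal contribution is controlled by the covariance estimates (the constant $C_A$ tends to $0$ as $A\to\infty$), while the near-diagonal contribution is killed by the tail estimate for $\max_{\theta} Z_\epsilon(t,e^{i\theta})$ from \cite{acosta}, giving convergence \emph{in law} of the circle-average measures towards $M'$. Convergence in probability is then recovered by proving joint convergence in law of $(W,Y,M_\epsilon^1)$ towards $(W,Y,F(W,Y))$ via a Girsanov computation, and using that a family converging jointly in law with its limiting randomness to a measurable function of that randomness must converge in probability. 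None of these three ingredients (the interpolation inequality, the maximum tail bound, the joint-law upgrade) appears in your plan, and without them the identification step remains open.
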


\begin{theorem} \label{seneta2} 
The family of bulk approximation measures on $\D$
\begin{equation*}
\sqrt{\ln \frac{1}{\epsilon}}\, \epsilon^2\,  e^{ 2 X_{\partial \D,\epsilon}   } d\lambda
\end{equation*}
converges in probability as $\epsilon$ goes to $0$ towards a non trivial limiting measure, which has moments of order $q$ for all $q<1$.
\end{theorem}

\begin{remark}
Actually, our proof for the two above theorems establishes convergence in probability for a large class of cutoff approximations with mollifying family, not only the circle average family.  
\end{remark}

\noindent {\it Proof of Theorem \ref{seneta1}}.   The strategy is the following: first we show the convergence in probability of a specific family of white noise cutoff approximations. Then we will show that this entails the convergence in probability for a whole class of convolution approximations, including circle averages.

 Recall that if we consider a centered Gaussian distribution $X$  on the boundary of the unit disk with the following covariance structure
\begin{equation*}
\E[   X(e^{i \theta}) X(e^{i \theta})  ]= 2 \ln \frac{1}{|e^{i \theta}-e^{i \theta'}|},
\end{equation*}
then the law on the boundary of the   GFF $X_{\partial \D}$ is given by
\begin{equation*}
X_{\partial \D}= X-\frac{1}{2 \pi} \int_0^{2 \pi} X(e^{i \theta}) d \theta.
\end{equation*}
Our first step is to  construct $X_{\partial \D}$ as a function of some white noise $W$ and of a smooth Gaussian process $Y$.  This decomposition will be convenient to establish convergence in probability of the approximating  measures based on martingale techniques. We will recover the situation  of approximations based on convolution of  $X_{\partial \D}$   after that.

Recall the following decomposition (see \cite{vincent})
\begin{equation*}
\forall x\in \R^2,\quad \ln_+ \frac{1}{|x|}= 2 \int_0^1 (t-|x|^{\frac{1}{2}})_{+} \frac{dt}{t^2}+2 (1-|x|^{\frac{1}{2}})_{+}\,.
\end{equation*}
Now we construct two Gaussian distributions: the first one $\bar{X}$ will have the covariance structure of the first term in the above right-hand side and the second one $Y$ the second term. 

\begin{lemma}\label{decomp}
There exists a white noise $W$ on $[1,+\infty[\times \partial\D$ and a family of centered Gaussian processes $( \bar{X}_\epsilon)_{\epsilon\in]0,1]}$ on $\partial \D$, which are measurable functions of this white noise, such that
\begin{equation}
\forall 0<\epsilon <\epsilon'\leq 1,\quad \bar{X}_\epsilon-\bar{X}_{\epsilon'} \text{ is independent of } \sigma\{X_u(e^{i\theta});\epsilon'\leq u\leq 1,\theta\in[0,2\pi]\}
\end{equation}
and
\begin{equation}
\E[   \bar{X}_\epsilon (e^{i \theta}) \bar{X}_\epsilon (e^{i \theta'})   ]  = 2 \int_{\sqrt{\epsilon}}^1 (t-|e^{i \theta}-e^{i \theta'}|^{\frac{1}{2}})_{+}\frac{dt}{t^2}= \int_1^{\frac{1}{\epsilon}}  (1-|v(e^{i \theta}-e^{i \theta'})|^{\frac{1}{2}})_{+} \frac{dv}{v}.
\end{equation}
The limiting distribution $\bar{X}=\lim_{\epsilon\to 0} \bar{X}_\epsilon $ is a centered Gaussian distribution with covariance structure
\begin{equation*}
\E[   \bar{X}(e^{i \theta}) \bar{X}(e^{i \theta'})   ]  = 2 \int_0^1 (t-|e^{i \theta}-e^{i \theta'}|^{\frac{1}{2}})_{+}\frac{dt}{t^2}.
\end{equation*}
Finally, for any smooth function $R$ on $[1,+\infty[\times \partial\D$ with compact support, the function 
$$z\in\partial \D\mapsto T_\epsilon(R)(z):=\E[ \bar{X}_\epsilon (z)W(R)]$$ is a continuous function which converges uniformly as $\epsilon\to 0$ towards 
$$z\in\partial \D\mapsto T(R)(z):=\E[ \bar{X} (z)W(R)].$$
\end{lemma}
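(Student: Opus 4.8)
The plan is to recognize the kernel in the statement as a \emph{star-scale invariant} covariance and to realize $\bar X_\epsilon$ through the associated white noise decomposition (in the spirit of \cite{bacry} and the framework of \cite{Rnew1}), carried out intrinsically on the circle $\partial\D$. The entry point is the change of variable $v=1/t^2$ already recorded as the second equality in the statement, which rewrites the truncated $t$-integral as
\begin{equation*}
\int_1^{1/\epsilon}\big(1-|v(e^{i\theta}-e^{i\theta'})|^{1/2}\big)_+\,\frac{dv}{v},
\end{equation*}
exhibiting the ``seed'' kernel $k(x)=(1-|x|^{1/2})_+$; the same computation shows that the leftover term $2(1-|x|^{1/2})_+$ in the Bacry decomposition is exactly twice this seed, i.e. the covariance of the smooth field $Y$. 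The single analytic input I need from \cite{bacry} is that $k$ is positive definite on $\R^2$, and I emphasize that I will use \emph{only} positive definiteness, not smoothness, since the present seed is merely continuous (it has a square-root cusp at the origin and a kink at $|x|=1$), so the $C^2$ hypothesis used elsewhere for the GMC analysis is not available here.

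Granting this, I would first restrict $k_v(\theta,\theta')=(1-|v(e^{i\theta}-e^{i\theta'})|^{1/2})_+$ to $\partial\D$: being the restriction of a positive definite function of $z-z'$, it is a translation invariant positive definite kernel on the circle, hence a function of $\theta-\theta'$ with nonnegative Fourier coefficients $c_n(v)\geq 0$. I then let $q_v\in L^2(\partial\D)$ be the even kernel with Fourier coefficients $\sqrt{c_n(v)}$, introduce a white noise $W$ on $[1,+\infty[\times\partial\D$ with intensity $\tfrac{dv}{v}\,d\phi$, and set
\begin{equation*}
\bar X_\epsilon(e^{i\theta})=\int_{[1,1/\epsilon]\times\partial\D} q_v(\theta-\phi)\,W(dv\,d\phi).
\end{equation*}
The white noise isometry together with $q_v*\widetilde q_v=k_v$ (Fourier coefficients multiply back to $c_n(v)$) yields the stated covariance immediately, and the limiting covariance follows by monotone passage $\epsilon\to 0$, finite off the diagonal thanks to the $\tfrac{dv}{v}$ weight. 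The independence of increments is then transparent from the representation: for $\epsilon<\epsilon'$ the increment $\bar X_\epsilon-\bar X_{\epsilon'}$ is an integral of $W$ over the scale slab $\{1/\epsilon'\leq v\leq 1/\epsilon\}$, disjoint from the slab $\{1\leq v\leq 1/\epsilon'\}$ generating all coarser fields, so the white noise property gives independence from the $\sigma$-algebra in the statement and, more importantly, the martingale structure exploited in Theorems \ref{seneta1}--\ref{seneta2}.

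For the final assertion the isometry gives
\begin{equation*}
T_\epsilon(R)(e^{i\theta})=\int_{[1,1/\epsilon]\times\partial\D} q_v(\theta-\phi)\,R(v,e^{i\phi})\,\frac{dv}{v}\,d\phi .
\end{equation*}
Since $\|q_v\|_{L^2(\partial\D)}^2=(q_v*\widetilde q_v)(0)=k_v(\theta,\theta)=1$ uniformly in $v$, and $R(v,\cdot)$ is smooth, each inner convolution $q_v*R(v,\cdot)$ is continuous and bounded uniformly in $v$; dominated convergence over the bounded range of $v$ then gives continuity of $T_\epsilon(R)$ and of its limit $T(R)$. Because $R$ has compact support, say in $[1,M]\times\partial\D$, the integral no longer changes once $1/\epsilon\geq M$, so in fact $T_\epsilon(R)=T(R)$ for all small $\epsilon$ and uniform convergence is automatic. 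The main obstacle is thus the very first step: legitimizing the white noise construction on the circle for the \emph{non-smooth} seed $(1-|x|^{1/2})_+$, that is, having the positive definiteness of \cite{bacry} at one's disposal and checking that the resulting $q_v$ and field $\bar X_\epsilon$ are genuine $L^2$ objects; once this is secured, the covariance identities, the independence of increments, and the convergence of $T_\epsilon(R)$ are all routine consequences of the isometry.
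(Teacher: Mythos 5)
Your construction is essentially identical to the paper's: the appendix proof also builds $\bar X_\epsilon$ from a white noise on $[1,+\infty[\times\partial\D$ by taking the nonnegative square roots $\alpha_v(n)$ of the Fourier coefficients of $\theta\mapsto(1-|v(e^{i\theta}-1)|^{1/2})_+$ (your $q_v$ written in Fourier series form), verifies the covariance by the same isometry computation, and handles $T_\epsilon(R)$ via the compact support of $R$ exactly as you do. Your explicit appeal to positive definiteness of the seed on $\R^2$ to justify $c_n(v)\geq 0$ is a point the paper leaves implicit, so the proposal is correct and matches the paper's route.
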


This lemma is proved in Appendix~\ref{proof:decomp}. Then we consider a centered Gaussian field $Y$ independent of $ ( \bar{X}_\epsilon)_{\epsilon\in]0,1]}$ with covariance given by
\begin{equation*}
\E[   Y (e^{i \theta}) Y (e^{i \theta'})   ]  =  (1-|e^{i \theta}-e^{i \theta'}|^{\frac{1}{2}})_{+}\,.
\end{equation*}
Recall that such a kernel is indeed positive definite \cite{golubov}. 

Now we can set 
\begin{equation*}
X_{\partial \D}= \bar{X}+Y-\frac{1}{2 \pi} \int_0^{2 \pi} ( \bar{X}(e^{i \theta})+Y( e^{i \theta} ) )d \theta. 
\end{equation*}
This is a construction of $X_{\partial \D}$ as a function of $(W,Y)$. Now we would like to use \cite{Rnew12} to show that the random measures
\begin{equation}\label{cvprobsen}
\sqrt{\ln \frac{1}{\epsilon}} \,\epsilon\, e^{  \bar{X}_\epsilon   } d \lambda_{\partial}
\end{equation}
converges in probability to a non trivial limiting random measure as $\epsilon\to 0$.  To this purpose, observe that the covariance $(k_\epsilon)_{\epsilon\in]0,1]}$ kernels of the family $( \bar{X}_\epsilon)_{\epsilon\in]0,1]}$ can be written as 
$$k_\epsilon(e^{i\theta},e^{i\theta'})=\int_1^{\frac{1}{\epsilon}}\frac{k(v,e^{i\theta},e^{i\theta'})}{v}\,dv\quad \text{ with }\quad k(v,e^{i\theta},e^{i\theta'})=(1-|v(e^{i \theta}-e^{i \theta'})|^{\frac{1}{2}})_{+}.$$
Such a kernel $k$ satisfies the properties
\begin{description}
\item[A.1] $k$ is nonnegative, continuous. 
\item[A.2] $k$ is H\"older on the diagonal, more precisely   $\forall \theta,\theta'$, $\forall v\geq 1$,  
$$  |k(v,e^{i\theta},e^{i\theta})-k(v,e^{i\theta},e^{i\theta'})|\leq   v^{1/2}|e^{i \theta}-e^{i \theta'}|^{1/2}$$
\item[A.3] $k$ satisfies the integrability condition 
$$\sup_{\theta,\theta'}\int_{\frac{1}{|e^{i\theta}-e^{i\theta'}|}}^{\infty}\frac{k(v,e^{i\theta},e^{i\theta'})}{v}\,dv<+\infty.$$ 
\item[A.4] for all $\epsilon\in]0,1]$, $\int_1^{ \frac{1}{\epsilon}}  \frac{k(v,e^{i\theta},e^{i\theta})}{v}\,dv=\ln\frac{1}{\epsilon}$, 
\item[A.5] $k(v,e^{i\theta},e^{i\theta'})=0$ for $|e^{i\theta}-e^{i\theta'}|\geq  v^{-1} $.
\end{description}
Observe in particular that  [A.2] implies that
$$|\ln\frac{1}{\epsilon}-k_\epsilon(e^{i\theta},e^{i\theta'})|\leq\int_{1}^{1/\epsilon} \frac{|e^{i\theta}-e^{i\theta'}|^{1/2}}{v^{1/2}}dv\leq C (|e^{i\theta}-e^{i\theta'}|/\epsilon)^{1/2}$$
for some constant $C$ (independent of $\epsilon$). In particular we have the property
\begin{equation}\label{LBMcrit1}
|e^{i\theta}-e^{i\theta'}|\leq \epsilon \quad \Rightarrow \quad |\ln\frac{1}{\epsilon}-k_\epsilon(e^{i\theta},e^{i\theta'})|\leq C.
\end{equation}
These properties are the only assumptions used in \cite{LBMcrit} to construct the derivative martingale  and in \cite{Rnew12} to prove the Seneta-Heyde norming. Therefore the family of random measures \eqref{cvprobsen} converges in probability towards a non trivial random measures, which has moments of order $q$ for all $q<1$ (see \cite{Rnew12}).

Hence, if $X_\epsilon= \bar{X}_\epsilon+Y-\frac{1}{2 \pi} \int_0^{2 \pi} ( \bar{X}_\epsilon(e^{i \theta})+Y( e^{i \theta} ) )d \theta$, then  
\begin{equation*}
M_\epsilon=\sqrt{\ln \frac{1}{\epsilon}} \,\epsilon\, e^{  X_\epsilon(e^{i \theta})   } d\lambda_{\partial}
\end{equation*}
converges in probability to a random measure $M'$ which is a measurable function of the white noise $W$ and the process $Y$, call it $F(W,Y)$.  

\medskip
Now, we show convergence in probability of $\sqrt{\ln \frac{1}{\epsilon}}\, \epsilon\, e^{ X_{\partial \D,\epsilon}  }  d\lambda_{\partial}$, where $X_{\partial \D,\epsilon}$ is  the circle average approximation  of $X_{\partial \D}$ towards the same limit $M'$. The ideas in the following stem from the techniques developed in \cite{review} along with some variant of Lemma~49 in \cite{shamov} (we will not recall Lemma~49 as our proof will be self contained).

For this, we introduce $X_{\partial \D}^1$, an independent copy of $X_{\partial \D}$, and $X_{\partial \D,\epsilon}^1$ its circle average approximation.  Let us define for $t\in[0,1]$ and $\theta\in [0,2\pi]$
$$
Z_\epsilon(t,e^{i\theta})=\sqrt{t}X_{\partial \D,\epsilon}^1(e^{i\theta})+\sqrt{1-t} X_\epsilon(e^{i\theta}).
$$       
Now, we set 
\begin{equation*}
M_{ \epsilon}^1=\sqrt{\ln \frac{1}{\epsilon}}\, \epsilon\, e^{  X_{\partial \D,\epsilon}^1(e^{i \theta})    } d\lambda_\partial.
\end{equation*}

We first show that $M_{ \epsilon}^1$ converges in distribution to $M'=F(W ,Y)$.  
From \cite[Proof of Theorem 2.1]{vincent}, one gets that for all $\alpha<1$
\begin{align*}
& \underset{\epsilon \to 0} {\overline{\lim}}  \big|\E[M_{\epsilon}^1(B)^\alpha] -\E[M_{\epsilon}(B)^\alpha]\big|  \\
& \leq c \frac{\alpha(1-\alpha)}{2}C_A \underset{\epsilon \to 0} {\overline{\lim}} \int_0^1\E\Big[\Big ( \sqrt{\ln \frac{1}{\epsilon}}  \int_{\partial \D} e^{Z_\epsilon(t,\cdot)-\frac{1}{2}\E[Z_\epsilon(t,\cdot)^2]}\,d\lambda_{\partial}  \Big )^\alpha\Big] \,dt \\
& +  c\,  \overline{C}_A  \underset{\epsilon \to 0} {\overline{\lim}} \int_0^1 \E \Big [\Big ( \sup_{0\leq i<\frac{1}{A\epsilon}}  \sqrt{\ln \frac{1}{\epsilon}}  \int_{2iA\epsilon}^{2(i+1)A\epsilon}e^{Z_\epsilon(t,e^{i\theta})-\frac{1}{2}\E[Z_\epsilon(t,e^{i\theta})^2]}  d\theta \Big)^\alpha     \Big]  dt,   
\end{align*}
where
\begin{equation*}
C_A=\underset{\epsilon \to 0}{\overline{\lim}} \sup_{|e^{i\theta}-e^{i\theta'}| \geq A \epsilon}  | \E[ X_{\partial \D,\epsilon}^1(e^{i\theta}) X_{\partial \D,\epsilon}^1(e^{i\theta'})   ] -  \E[ X_{\epsilon}(e^{i\theta}) X_{\epsilon}(e^{i\theta'})   ] |
\end{equation*}
and
\begin{equation*}
\overline{C}_A= \underset{\epsilon \to 0}{\overline{\lim}} \sup_{|e^{i\theta}-e^{i\theta'}| \leq A \epsilon}  | \E[ X_{\partial \D,\epsilon}^1(e^{i\theta}) X_{\partial \D,\epsilon}^1(e^{i\theta'})   ] -  \E[ X_{\epsilon}(e^{i\theta}) X_{\epsilon}(e^{i\theta'})   ] |.
\end{equation*}
The reader can check that $\overline{C}_A$ is bounded independently of $A$ and $\underset{A \to \infty}{\lim} C_A=0$. Since \\ $\E\big[ \left ( \sqrt{\ln \frac{1}{\epsilon}}  \int_0^1 e^{Z_\epsilon(t,u)-\frac{1}{2}\E[Z_\epsilon(t,u)^2]}\,du  \right )^\alpha\big] $ is also bounded  independently of everything (by comparison with Mandelbrot's multiplicative cascades as explained in the \cite[Appendix]{Rnew7} and \cite[Appendix B.4]{Rnew12}), we are done if we can show that for all $t\in [0,1]$
\begin{equation}\label{claim}
 \underset{\epsilon \to 0} {\overline{\lim}} \,\,\E \Big [\Big ( \sup_{0\leq i<\frac{1}{A\epsilon}}  \sqrt{\ln \frac{1}{\epsilon}}  \int_{2iA\epsilon}^{2(i+1)A\epsilon}e^{Z_\epsilon(t,e^{i\theta})-\frac{1}{2}\E[Z_\epsilon(t,e^{i\theta})^2]}  d\theta \Big)^\alpha     \Big] =0.
 \end{equation}
Notice that this quantity is less than
\begin{equation}\label{ascola}
\big(\ln \frac{1}{\epsilon}\big)^{\alpha/2}\epsilon^\alpha \E  \Big [  \Big  (    e^{\sup_{\theta \in [0,2\pi]} Z_\epsilon(t,e^{i\theta})-\frac{1}{2}\E[Z_\epsilon(t,e^{i\theta})^2]}   \Big )^\alpha     \Big ] .
\end{equation} 
To estimate this quantity, we use the main result  of \cite{acosta}: more precisely, setting 
$$m_\epsilon=2\ln\frac{1}{\epsilon}-\frac{3}{2}\ln\ln \frac{1}{\epsilon},$$
 we claim that there exist two constants $C,c>0$ such that for $\epsilon$ small enough
\begin{align*}
\forall x\geq 0,\quad \P\Big(\Big|\max_{\theta\in [0,2\pi]}Z_\epsilon(t,e^{i\theta})-m_\epsilon \Big|\geq x\Big)\leq Ce^{-cx}.
\end{align*}
In particular we get that for   $\alpha<c$  
\begin{equation*}
\sup_{\epsilon} \E  \Big [  \Big  (    e^{\sup_{\theta \in [0,2\pi]} Z_\epsilon(t,e^{i\theta})}   \Big )^\alpha     \Big ]<\infty.
\end{equation*}
Plugging this estimate into \eqref{ascola}, we see that the quantity \eqref{ascola} is less than
$$C' \big(\ln \frac{1}{\epsilon}\big)^{\alpha/2}\epsilon^{2\alpha}e^{\alpha m_\epsilon}=C' \big(\ln \frac{1}{\epsilon}\big)^{-\alpha}. $$
for some constant $C'>0$. This proves the claim \eqref{claim}, hence the convergence in law of the random measure $M_\epsilon^1$ towards  $M'=F(W,Y)$. 

Now we deduce that the family $(W,Y,M_\epsilon^1)_\epsilon$ converges in law. Take any  smooth function $R$ on $[1,+\infty[\times \partial\D$ with compact support, any  continuous function $g$ on $\partial\D$, any bounded continuous function $G$ on $\R$ and $u\in\R$. We have by using the Girsanov transform 
$$\E[e^{W(R)+uY}G(M_\epsilon^1(g))]=e^{\frac{1}{2}{\rm Var}[W(R)+uY]}\E[ G(M_\epsilon^1(e^{T_\epsilon(R)}g)]$$
where $T_\epsilon(R)$ is defined in Lemma \ref{decomp}. The quantity in the right-hand side converges as $\epsilon\to 0$ towards
 $$e^{\frac{1}{2}{\rm Var}[W(R)+uY]}\E[ G(M'(e^{T(R)}g)]=\E[e^{W(R)+uY}G(M'(g))].$$
Hence our claim about the convergence in law of the triple $(W,Y,M_\epsilon^1)_\epsilon$ towards $(W,Y,M'=F(W,Y))$ is proved.
 
 Now we consider the family $(W,Y, M_{\epsilon}^1,F(W,Y))_\epsilon$, which is tight. Even if it means extracting a subsequence, it converges in law towards some $(\mathcal{W}, \mathcal{Y}, \mathcal{M}, \bar{\mathcal{M}})$. We have just shown that the law of $(\mathcal{W}, \mathcal{Y}, \mathcal{M})$ is that of  $(\mathcal{W},\mathcal{Y}, F
(\mathcal{W},\mathcal{Y}))$, i.e. the same as the law of 
$(\mathcal{W},\mathcal{Y}, \bar{\mathcal{M}})$. Hence $\mathcal{M} =\bar{\mathcal{M}}$ almost surely. Therefore $M^1_{\epsilon}- F(W,Y)$ converges in law towards $0$, hence in probability.   Since the  convergence in probability of the family $(M^1_{\epsilon})_\epsilon$ implies the convergence of probability of every family $(\widehat{M}_{\epsilon})_\epsilon$ that has the same law as $(M^1_{\epsilon})_\epsilon$, the proof of Theorem \ref{seneta1} is complete.

Finally, one can notice that instead of $X_{\partial \D,\epsilon}$ we could have considered any smooth convolution approximation of $X$.     \qed

\bigskip

\noindent {\it Proof of Theorem \ref{seneta2}.} Let us consider the Poisson kernel on the unit disk 
$$\forall 0\leq r <1,\forall \theta\in [0,2\pi],\quad P_r(\theta)=\sum_{n\in\Z} r^{|n|}e^{in\theta}.$$
We can then consider the harmonic extension inside the unit disk of the trace of the GFF $X_{\partial \D}$ along the boundary
$$P_X(re^{i\theta})=\frac{1}{2\pi}\int_0^{2\pi}P_r(\theta-t)X_{\partial \D}(e^{i t})\,dt.$$
 It is plain to see that $P_X$ is a continuous Gaussian process inside the unit disk. If we set 
 $$X^{{\rm Dir}}=X_{\partial \D}-P_X,$$ one can check that we get a GFF with Dirichlet boundary condition in the unit disk. Therefore, by continuity of $P_X$ inside $\D$, the convergence in probability of the random measures $(\epsilon^2 \, e^{ 2 X_{\partial \D,\epsilon}(x) } d\lambda)_\epsilon$ boils down to showing the convergence in probability for the random measures
$$(\epsilon^2 \, e^{ 2 X^{{\rm Dir}}_\epsilon(x) } d\lambda)_\epsilon$$
where $(X^{{\rm Dir}}_\epsilon)_\epsilon$ stands for the circle average approximations of the GFF $X^{{\rm Dir}}$. Given the fact that the Seneta-Heyde norming has been proved in \cite{Rnew12} for a white noise decomposition of $X^{{\rm Dir}}$, we can use the same argument as in the proof of Theorem \ref{seneta1} to show that convergence for the white noise approximation family entails the convergence in probability for the circle average approximations.\qed

\medskip
From now on, the construction of the Liouville LQG on the unit disk for $\gamma=2$ follows the same lines as for $\gamma<2$ by taking the limit as $\epsilon\to 0$ of the quantity 
\begin{align}\label{eq:defPigcrit}
&\Pi_{2,\mu_{\partial  },\mu}^{(z_i,\alpha_i)_i,(s_j,\beta_j)_j}  (g,F)\\
 =&e^{  \frac{1}{96\pi}\Big(\int_{\D}|\partial \ln g|^2\,d\lambda+\int_{\partial \D}4\ln g\,d\lambda_{\partial} \Big)} \lim_{\epsilon\to 0}\int_\R\E\Big[F( X_{\partial  \D}+c+\ln g)\prod_i \epsilon^{\frac{\alpha_i^2}{2}}e^{\alpha_i (c+X_{\partial\D,\epsilon}+\ln g)(z_i)} \nonumber\\
&\prod_j \epsilon^{\frac{\beta_j^2}{4}}e^{\frac{\beta_j }{2}(c+X_{\partial\D,\epsilon}+\ln g)(s_j)}\exp\Big( - \frac{2}{4\pi}\int_{\D}R_{g} (c+X_{\partial  \D})  \,d\lambda_g - \mu e^{2 c}\sqrt{-\ln\epsilon}\epsilon^{2}\int_{\D}e^{2 (X_{\partial \D,\epsilon}+ \ln g) }\,d\lambda \Big)\nonumber \\
&\exp\Big( - \frac{2}{2\pi}\int_{\partial \D}K_{g} (c+X_{\partial\D})  \,d\lambda_{\partial g} - \mu_\partial e^{ c}\sqrt{-\ln\epsilon}\epsilon\int_{\partial\D}e^{  (X_{\partial\D,\epsilon}+  \ln g) }\,d\lambda_{\partial } \Big) \Big]\,dc.\nonumber %
\end{align}
defined for all continuous and bounded functional $F$ on $H^{-1}(\D)$. From now on, the properties of LQG  (and their proofs) on the disk for $\gamma=2$ are the same as for $\gamma<2$ except Proposition \ref{finitemeas}, which needs some extra care that we treat now.
  
 \begin{proposition}\label{finitemeascrit}
The quantities below are almost surely finite
\begin{equation*}
\int_\D e^{2 X_{\partial\D}}d\lambda \quad \text{ and }\quad \int_{\partial\D }e^{X_{\partial\D}}d\lambda_{\partial \D}.
\end{equation*}
\end{proposition}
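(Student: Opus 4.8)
The plan is to treat the two measures separately and to observe, exactly as in Proposition \ref{finitemeas}, that the boundary measure is the easy case whereas the bulk measure near $\partial\D$ is where all the difficulty lies. For the boundary measure $\int_{\partial\D}e^{X_{\partial\D}}\,d\lambda_\partial$ I would simply invoke Theorem \ref{seneta1}: the circle $\partial\D$ is compact and, by construction, this measure is the Seneta--Heyde limit of the approximations on all of $\partial\D$ with finite moments of order $q<1$; in particular its total mass is almost surely finite and nothing further is required. For the bulk measure $\int_\D e^{2X_{\partial\D}}\,d\lambda$ the first step is to split $\D$ into a fixed compact core $\{|x|^2\le 1/2\}$ and a boundary collar $\{1/2\le|x|^2<1\}$. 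On the core the reduction to the Dirichlet field $X^{\rm Dir}$ from the proof of Theorem \ref{seneta2} is valid, since the harmonic extension $P_X$ is continuous on compact subsets of $\D$, so the core carries finite mass by Theorem \ref{seneta2}. The whole point is therefore to control the collar, where the Dirichlet reduction degenerates: there the density $g_P(x)^{\gamma^2/4}=g_P(x)$ blows up and the covariance of $X_{\partial\D}$ doubles, $G(x,y)\sim 2\ln\frac{1}{|x-y|}$, as $x,y$ approach $\partial\D$ (recall Proposition \ref{circlegreen}).

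To handle the collar I would run the dyadic scheme of Proposition \ref{finitemeas}. Decompose the collar into annuli $\mathcal{A}_n=\{1-2^{-n}\le|x|^2\le 1-2^{-n-1}\}$, apply the sub-additivity inequality $(\sum_j a_j)^\alpha\le\sum_j a_j^\alpha$ valid for $\alpha\in]0,1[$, and estimate $\E[\nu(\mathcal{A}_n)^\alpha]$ for the critical measure $\nu$ on each (compact) annulus. On $\mathcal{A}_n$ I would compare, via Kahane's convexity inequality, the restriction of $X_{\partial\D}$ with $\sqrt2$ times a star-scale invariant field $Y$ frozen at scale $2^{-n}$, the factor $\sqrt2$ accounting precisely for the doubled covariance; after integrating out the radial variable (width $\sim2^{-n}$) and the density $g_P\sim 2^{2n}$, this reduces the estimate to a moment bound for the angular integral $\int_0^{2\pi}e^{2\sqrt2\,Y_{2^{-n}}-4\,\E[Y_{2^{-n}}^2]}\,d\theta$ at its critical normalization.

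The decisive observation is that for the angular field the effective inverse temperature is $\beta=\gamma\sqrt2=2\sqrt2$, strictly larger than the one-dimensional critical value $\sqrt2$; the angular object is thus in the glassy (super-critical) phase even though $\gamma=2$ is critical for the two-dimensional measure. Consequently the same Mandelbrot-cascade moment estimate used in Proposition \ref{finitemeas} (as in \cite{madaule} via \cite[Appendix B.1]{Rnew7}) applies at $\gamma=2$, producing the normalization $n^{3}2^{n}$ and fractional moments of all orders $\alpha<\gamma^{-1}=1/2$. Carrying the exponents exactly as at the end of Proposition \ref{finitemeas}, the geometric factor $2^{n\alpha(\gamma^2/2-1-(\gamma-1)^2)}=2^{-n\alpha(\gamma-2)^2/2}$ degenerates to $1$ at $\gamma=2$, leaving the bound $\E[\nu(\mathcal{A}_n)^\alpha]\le C\,n^{-3\alpha}$. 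Choosing $\alpha\in]1/3,1/2[$ makes $\sum_n n^{-3\alpha}$ convergent, which gives a finite $\alpha$-moment for the collar mass and hence its almost sure finiteness; combined with the core this closes the bulk case.

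The main obstacle, and the only genuinely new point compared with Proposition \ref{finitemeas}, is the bookkeeping of the Seneta--Heyde normalization $\sqrt{\ln\frac{1}{\epsilon}}$ through this comparison. Unlike the subcritical case, one cannot work with the closed-form density of Proposition \ref{law} (it converges to zero at $\gamma=2$), so the annulus estimates must be carried out on the approximations $\sqrt{\ln\frac{1}{\epsilon}}\,\epsilon^2 e^{2X_{\partial\D,\epsilon}}\,d\lambda$ uniformly in $\epsilon$, with the crossover of the covariance at scale $2^{-n}$ (a single logarithm below this scale, a doubled logarithm above it) tracked carefully, and the limiting estimate recovered by Fatou. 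Once the uniform-in-$\epsilon$ version of the cascade moment estimate is established, the summation above goes through unchanged and the proof is complete.
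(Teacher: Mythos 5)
Your overall strategy is the paper's: reduce to the bulk measure, decompose into the dyadic annuli $\{1-2^{-n}\le|x|^2\le 1-2^{-n-1}\}$, apply sub-additivity of $x\mapsto x^{\alpha}$, compare via Kahane's convexity inequality to a branching structure at scale $2^{-n}$, invoke Madaule's moment estimates, and sum. Your exponent bookkeeping is also exactly right: the geometric factor $2^{-n\alpha(\gamma-2)^2/2}$ degenerates to $1$ at $\gamma=2$, the polynomial correction $n^{-3\alpha}$ survives, and one closes with $\alpha\in\,]1/3,1/2[$. The boundary measure via Theorem \ref{seneta1} and the core via Theorem \ref{seneta2} are fine (the paper does not even bother with the core/collar split, since the $n=0$ annulus already covers the core).

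The one substantive gap is the object you compare to. You reduce the annulus estimate to a fractional moment of the \emph{frozen} angular integral $\int_0^{2\pi}e^{2\sqrt2\,Y_{2^{-n}}-4\E[Y_{2^{-n}}^2]}\,d\theta$, i.e.\ you discard all scales below $2^{-n}$. At $\gamma=2$ this cannot work as stated: the measure is the Seneta--Heyde limit of $\sqrt{\ln\frac{1}{\epsilon}}\,\epsilon^2e^{2X_{\partial\D,\epsilon}}d\lambda$, and the divergent factor $\sqrt{\ln\frac{1}{\epsilon}}$ is compensated precisely by the fine scales between $2^{-n}$ and $\epsilon$; a comparison field frozen at $2^{-n}$ sees none of this, so the resulting bound controls the unnormalized (vanishing) measure rather than the critical one. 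You do flag the normalization as ``the main obstacle,'' but the fix is not a uniform-in-$\epsilon$ rerun of the frozen estimate plus Fatou. The paper resolves it by keeping the fine scales explicitly: the annulus mass is dominated by $\sum_{j=1}^{2^n}Z_j\,e^{2\sqrt2(Y_j^n-\sqrt2\ln 2\,n)}$, where the $Z_j$ are i.i.d.\ total masses of \emph{critical} cascades rooted at generation $n$ (these are what absorb the Seneta--Heyde factor). Since $\P(Z_j>x)\le C/x$ and $\E[Z_j^q]<\infty$ only for $q<1$, an extra step is then needed that your sketch does not anticipate: a conditional Jensen argument replacing $Z_j$ by $\E[Z_j^{1-\eta}]$ at the price of raising the outer exponent to $\alpha/(1-\eta)$, with the constraint $2\alpha(1-\eta)<1$, which still leaves room for $\alpha>1/3$. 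With the comparison object corrected in this way, the rest of your argument goes through and coincides with the paper's proof.
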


\begin{proof} 
Recall the sub-additivity inequality for $\alpha\in]0,1[$: if $(a_j)_{1\leq j\leq n}$ are positive real numbers then
$$(a_1+\dots+a_n)^\alpha\leq a_1^\alpha+\dots+a_n^\alpha.$$
Now we use Kahane's convexity inequality \cite[Theorem 2.1]{review} to compare the Gaussian multiplicative chaos with standard dyadic lognormal cascade (once again we refer to \cite[Appendix B.1]{Rnew7} for full details). We consider the dyadic tree with i.i.d.  weights with Gaussian law $\mathcal{N}(0,\ln 2)$ on the edges of the tree and denote by $Y_j^n$ the sum of these weights starting from the root up to the dyadic indexed by $j$ at generation $n$. We denote by $(Z_j)_j$ an i.i.d sequence (independent of everything) standing for the mass of the dyadic cascade at criticality rooted at the dyadic $j$ at generation $n$. From \cite{madaule} these random variables have distribution tail $\P(Z_j >x) \leq \frac{C}{x}$ for some constant $C>0$, and $\E[Z_j^q]<\infty$ for $q<1$. Hence we get
\begin{align*}
 & \mathds{E}\Big[\Big(\int_\mathds{D} e^{2 X_{\partial\D}(x)-2  \E[X_{\partial\D}(x)^2] } \frac{1}{(1-|x|^2)^{2}})\lambda(dx)\Big)^\alpha\Big]\\
& \leq \sum\limits_{n\in\mathds{N}}2^{2 n\alpha }\mathds{E}\Big[ \Big(\int_{1-2^{-n} \leq |x|^2 \leq 1-2^{-n-1} } e^{2 X_{\partial\D}(x)-2  \E[X_{\partial\D}(x)^2] }\lambda(dx)\Big)^\alpha\Big] \\
& \leq  \sum\limits_{n\in\mathds{N}} \mathds{E}\Big[ \Big( \sum_{j=1}^{2^n} Z_j   e^{2 \sqrt{2} (Y_j^n-  \sqrt{2}  \ln 2 \, n  ) }  \Big)^\alpha\Big] \\
& =  \sum\limits_{n\in\mathds{N}} \frac{1}{n^{3 \alpha}} \mathds{E}\Big[ \Big( \sum_{j=1}^{2^n} Z_j  e^{2 \sqrt{2} (Y_j^n-  \sqrt{2}  \ln 2 \, n   +\frac{3}{2 \sqrt{2}} \ln n) }  \Big)^\alpha\Big]  .
\end{align*}
Let $\eta \in]0,1[$. By Jensen and for some constant $B>0$
\begin{align*}
   \mathds{E}\Big[ \Big( \sum_{j=1}^{2^n} Z_j  e^{2 \sqrt{2} (Y_j^n-  \sqrt{2}  \ln 2 \, n   +\frac{3}{2 \sqrt{2}} \ln n) }  \Big)^\alpha\Big]   &=  \mathds{E}\Big[ \Big( \sum_{j=1}^{2^n} \E[Z_j^{1-\eta}]  e^{2 \sqrt{2} (1-\eta)(Y_j^n-  \sqrt{2}  \ln 2 \, n   +\frac{3}{2 \sqrt{2}} \ln n) }  \Big)^{\frac{\alpha}{1-\eta}}\Big]  \\
   &\leq B \mathds{E}\Big[ \Big( \sum_{j=1}^{2^n}  e^{2 \sqrt{2} (1-\eta)(Y_j^n-  \sqrt{2}  \ln 2 \, n   +\frac{3}{2 \sqrt{2}} \ln n) }  \Big)^{\frac{\alpha}{1-\eta}}\Big]
\end{align*}
From \cite{madaule} again, this last expectation is bounded independently of $n$ provided that we choose $2\alpha(1-\eta)<1$. In that case, up to changing the value of $B$, we get
\begin{align*}
\mathds{E}\Big[\Big(\int_\mathds{D} e^{2 X_{\partial\D}(x)-2  \E[X_{\partial\D}(x)^2] } \frac{1}{(1-|x|^2)^{2}})\lambda(dx)\Big)^\alpha\Big]  \leq  B \sum\limits_{n\in\mathds{N}} \frac{1}{n^{3 \alpha}}   ,
\end{align*}
which can be obviously made finite provided that $\alpha >1/3$.
 \end{proof}

\section{Conjectures related to planar quadrangulations with boundary}

We consider $\overline{\mathcal{Q}}_{n,p}$ the set of quandrangulations of size $n$, i.e. with $n$ inner faces and a simple boundary of length $2p$ with one marked edge on the boundary and one marked face inside. Now to each quadrangulation $Q$ with a marked point inside and a marked point on the boundary (we choose at random a point in the marked face at a point on the marked edge), we associate a standard conformal structure (by gluing Euclidean squares along their edges as prescribed by the quadrangulation) and map it to the disk such that the interior point gets mapped to $0$ and the frontier point to $1$. We give volume $a^2$ to each quadrilateral and length $a$ to each edge on the boundary: we denote $\nu_{Q,a}$ the corresponding volume measure and $\nu_{Q,a}^{\partial}$ the corresponding boundary length measure. Recall that we have the following asymptotics as $n,p \to \infty$ with $\frac{n}{p^2}$ tending to some value (see appendix):
\begin{equation*}
|\overline{\mathcal{Q}}_{n,p}  |  \sim  e^{n \ln 12} e^{2p \ln \frac{3}{\sqrt{2}} } n^{-3/2}  \frac{\sqrt{3p}}{2 \pi}    e^{- \frac{9 (2p)^2}{16n}}
\end{equation*}
and we set $\mu^c=  \ln 12, \, \mu^c_\partial= \ln \frac{3}{\sqrt{2}}$ (these two constants are not universal as they depend on the class of map one considers, i.e. are different for triangulations, etc...).
Now, we consider the measures $(\nu_{a},\nu_{a}^{\partial})$ defined by the following expression for all $F$

\begin{equation*}
\E^{a}[  F( \nu_{a},\nu_{a}^{\partial} )  ]= \frac{1}{Z_{a}}\sum_{n,p} e^{-\bar{\mu} n} e^{-\bar{\mu}_\partial   2p}\sum_{Q \in \mathcal{Q}_{n,p}} F(\nu_{Q,a},\nu_{Q,a}^{\partial}  ),
\end{equation*} 
where the constants $\bar{\mu},\bar{\mu}_\partial$ are functions of $a>0$ defined by
$\bar{\mu}=\mu_c+a^2 \mu, \: \bar{\mu}_\partial =\mu^c_\partial +a\mu_\partial$ and $Z_a$ is a normalization constant.
We can now state a precise mathematical conjecture:

\begin{conjecture}\label{conjecturecartes}
The limit  in law  $\: \underset{a \to 0}{\lim} \: (\nu_{a},\nu_{a}^{\partial} ) $ exists in the product space of Radon measures equipped with the topology of weak convergence and is given (up to deterministic constants) by the Liouville measure of LQG with parameter $\gamma=\sqrt{\frac{8}{3}}$, appropriate cosmological constants and $\alpha_1=\gamma$, $\beta_1=\gamma$ and points $z_1=0$, $s_1=1$. 
\end{conjecture}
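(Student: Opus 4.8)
The plan is to prove the convergence in three stages: first pin down the conjectured limiting object and its deterministic normalizing constants by matching the enumeration asymptotics to the partition function computed in subsection \ref{joint}; then establish tightness of the family $(\nu_{a},\nu_{a}^{\partial})_a$ in the product space of Radon measures; and finally identify every subsequential limit with the Liouville measure of Proposition \ref{prop:part} carrying the prescribed insertions. The value $\gamma=\sqrt{8/3}$ is forced because quadrangulations lie in the pure-gravity ($c=0$) universality class, for which $Q=\frac{2}{\gamma}+\frac{\gamma}{2}=5/\sqrt{6}$; one then checks that the insertions $\alpha_1=\beta_1=\gamma$ obey the Seiberg bounds \eqref{bounds1}, \eqref{bounds2}, \eqref{bounds3}, since $\gamma\approx 1.63<Q\approx 2.04$ while $\alpha_1+\tfrac12\beta_1=\tfrac32\gamma\approx 2.45>Q$, so the target object is well defined by Theorem \ref{th:seiberg}. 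Heuristically, marking one bulk and one boundary point with weight exactly $\gamma$ amounts to rooting the surface at a pair of \emph{quantum-typical} points, one sampled from the bulk measure and one from the boundary measure, which is precisely the effect of the uniformly chosen marked interior point and marked boundary edge in the discrete model.

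First I would extract the scaling from the enumeration. The asymptotics recalled above, valid as $n,p\to\infty$ with $n/p^2$ fixed, already encode a two-parameter Laplace structure: summing $e^{-\bar\mu N}e^{-2\bar\mu_\partial p}|\mathcal{T}_{N,p}|$ with $\bar\mu=\mu^c+a^2\mu$ and $\bar\mu_\partial=\mu^c_\partial+a\mu_\partial$, and rescaling volume by $a^2$ and boundary length by $a$, should produce in the limit $a\to 0$ a continuous integral over total bulk volume and total boundary length whose density is governed by the polynomial corrections $n^{-5/2}p^{1/2}$ and the Gaussian factor $e^{-9(2p)^2/(16n)}$. The plan is to match this Laplace transform with the explicit joint law of $(Z_0(\D),Z_0^{\partial}(\partial\D))$ derived in subsection \ref{joint}, checking in particular that the power of the total boundary mass there, namely $-\tfrac{2}{\gamma}(\alpha_1+\tfrac12\beta_1-Q)$, reproduces the discrete exponents. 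This step fixes the deterministic constants in the Conjecture and is, in effect, a first-moment consistency check. Tightness should then follow by controlling the total volume and boundary mass through these Boltzmann weights, so that no mass escapes to infinity or collapses.

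The substantive analytic step is the identification of the subsequential limits. The difficulty is that $\nu_a$ is built from a \emph{conformal embedding} of the quadrangulation (gluing Euclidean squares, uniformizing to $\D$, sending the bulk point to $0$ and the boundary point to $1$), whereas the natural scaling limit of the maps is metric. Assuming one had access to the conformal structure, the plan would be: (i) show the discrete Neumann Green function and the associated discrete free field converge to $X_{\partial\D}$; (ii) upgrade this to convergence of the exponential measures to the Gaussian multiplicative chaos $e^{\gamma X_{\partial\D}}\,d\lambda$ and $e^{\frac{\gamma}{2}X_{\partial\D}}\,d\lambda_\partial$ of Proposition \ref{law}, using the universality machinery of \cite{review}; and (iii) transport the rooting at the two uniform points into the insertion weights $\alpha_1=\beta_1=\gamma$ via a Girsanov shift, exactly as in the derivation of Proposition \ref{prop:part}.

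The main obstacle is exactly steps (i)--(ii): there is currently no route from the metric (Brownian disk) scaling limit to the conformal embedding, and no direct proof that the conformal embedding of a uniform quadrangulation converges to a GMC measure. This is the well-known open problem of the \emph{conformal embedding of the Brownian map/disk}, and it is precisely why the statement is a conjecture rather than a theorem. A partial attack could instead route through the mating-of-trees encoding of \cite{DMS}, which does yield LQG from planar maps but in the infinite-volume, two-marked-point regime with its characteristic scaling; one would then need to pass from that quantum disk to the finite-volume, three-point-fixed object of the present paper (the reduction discussed in the Remark of the Introduction), or alternatively invoke the KPZ relation to pin down the multifractal spectrum of any subsequential limit and force it to coincide with the conjectured Liouville measure. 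Each such reduction contains a genuine gap with present technology, so I expect the identification of the limit, and not tightness or the enumeration bookkeeping, to be the crux.
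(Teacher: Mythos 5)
The statement you are asked about is a conjecture: the paper offers no proof of it, only supporting evidence, namely the matching of the enumeration asymptotics of $|\mathcal{T}_{n,p}|$ against the conjectured joint density $V^{-3/2}l^{1/2}e^{-\mu V}e^{-\mu_\partial l}e^{-9l^2/(16V)}\,dV\,dl$ for the total volume and boundary length, which is exactly the first-moment consistency check you describe in your first stage. Your outline is consistent with the paper's discussion (your verification of the Seiberg bounds for $\alpha_1=\beta_1=\gamma=\sqrt{8/3}$, $Q=5/\sqrt{6}$ is correct), and you rightly identify that the genuine obstruction is the convergence of the conformal embedding of the quadrangulation to a GMC measure --- this is an open problem, which is precisely why the statement is a conjecture rather than a theorem, so no complete proof should be expected from either you or the paper.
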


Here we give a few more details on the above conjecture. It states the existence of constants $\bar{C},\bar{c}>0$ such that 

\begin{equation}\label{conjprecise}
\underset{a \to 0}{\lim} \: \E^{a}[  F( \nu_{a},\nu_{a}^{\partial} )  ] = \E ^{(0,\gamma),(1,\gamma)}_{\gamma,\bar{C} \mu,\bar{c} \mu_{\partial} } [ F( \bar{C}Z,\bar{c}Z_\partial ) ]
\end{equation}
with $\gamma=\sqrt{\frac{8}{3}}$. 
Looking at Section~\ref{joint}, recall that we have for all $\gamma \in ]\sqrt{2},2[$ 
\begin{align}
 &  \E ^{(0,\gamma),(1,\gamma)}_{\gamma,\bar{C} \mu,\bar{c} \mu_{\partial} } [ F( \bar{C}Z,\bar{c}Z_\partial ) ]  \nonumber \\
 & =   \frac{1}{C_{\mu,\mu_{\partial} ,\gamma}}\int_\R e^{(\gamma-\frac{2}{\gamma})c} \E\Big[F( \bar{C}e^{\gamma c}Z_0 , \bar{c}e^{\frac{\gamma}{2} c}Z^{\partial}_0  \big) 
  \exp\Big(  - \bar{C} \mu e^{\gamma c}Z_0(\D)   - \bar{c} \mu_{\partial} e^{\frac{\gamma}{2} c} Z_0^{\partial}(\partial\D)\Big)  \Big]\,dc  \label{explicit}
 \end{align} 
where the couple $(Z_0,Z^{\partial}_0 )$ is defined by
\begin{equation*}
Z_0=e^{\gamma H }e^{\gamma X_{\partial \D}}\,d\lambda,\quad Z^{\partial}_0=e^{ \frac{\gamma }{2}H}e^{\frac{\gamma }{2} X_{\partial \D}}\,d\lambda_{\partial}
\end{equation*}
where the couple $(e^{\gamma X_{\partial \D}}\,d\lambda,e^{\frac{\gamma }{2} X_{\partial \D}}\,d\lambda_{\partial})$  is a standard couple of Gaussian chaos measures defined by a limiting procedure in Proposition~\ref{law}   and
\begin{equation*}
H(x)= \gamma G(x,0)+\frac{\gamma}{2} G(x,1).  
\end{equation*}
with $G$ the standard Green function in the disk (see \eqref{GreenN} for the definition).
The constants $\bar{C},\bar{c}$ are non universal in the sense that they depend on the class of planar map you consider. For instance, the constants $\bar{C},\bar{c}$ will be different if you consider triangulations instead of quadrangulations. It would be interesting to know these constants (in the case of quadrangulations say); however, we do not know how to compute them as it requires information on the joint law of $(Z_0(\D),Z^{\partial}_0(\partial \D))$.

It is known that the joint law of the total volume and the total boundary length of $(\nu_{a},\nu_{a}^{\partial})$  is given by the following density within the regime of conjecture \ref{conjecturecartes} (see Appendix~\ref{mapasymp})
\begin{equation}\label{distrimap}
\frac{1}{D_{\mu,\mu_{\partial}}} V^{-3/2} l^{1/2} e^{- \mu V} e^{-\mu_{\partial} l} e^{-\frac{9 l^2 }{16V}} dl dV. 
\end{equation}
In fact, the above distribution should be universal, i.e. should not depend on the planar map model, except for the $\frac{9}{16}$ constant in $e^{-\frac{9 l^2 }{16V}}$ which is specific to quadrangulations and in the case of triangulations (for instance) one should get a different constant than $\frac{9}{16}$.  One can in fact read on relations \eqref{explicit} and \eqref{distrimap} where the relation $\gamma=\sqrt{\frac{8}{3}}$ comes from. Indeed, for any function $G$, by making a simple change of variables $V=\bar{C} e^{\gamma c} Z_0(\D) $ in \eqref{explicit} we get that
\begin{equation*}
\E ^{(0,\gamma),(1,\gamma)}_{\gamma,\bar{C} \mu,\bar{c} \mu_{\partial} } [ G( \bar{C}Z(\D))  e^{\bar{c}\mu_{\partial} Z_\partial (\partial \D)  } ] = \frac{1}{\gamma C_{\mu,\mu_{\partial} ,\gamma} }  \E[   \frac{1}{( \bar{C}  Z_0(\D)   )^{1-2/\gamma^2}}    ]  \int_0^\infty G(V) V^{-\frac{2}{\gamma^2}}e^{-\mu V}  dV. 
\end{equation*} 
Similarly, one has 
\begin{equation*}
\frac{1}{D_{\mu,\mu_{\partial}}} \int_0^\infty   \int_0^\infty   \left (     G(V) e^{\mu_{\partial} l }  \right )   V^{-3/2} l^{1/2} e^{- \mu V} e^{-\mu_{\partial} l} e^{-\frac{9 l^2 }{16V}} dl dV= \frac{\int_0^\infty  \tilde{l}^{-1/4} e^{-\frac{9}{16}  \tilde{l}^2}   d\tilde{l}}{2 D_{\mu,\mu_{\partial}}}   \int_0^\infty  G(V)   V^{-\frac{3}{4}}e^{-\mu V} dV,
\end{equation*}
by using the change of variable $\tilde{l}=\frac{l^2}{V}$. 
This shows that the only possible choice for \eqref{conjprecise} to hold is $\gamma$ such that $\frac{2}{\gamma^2}= \frac{4}{3}$, i.e. $\gamma=\sqrt{\frac{8}{3}}$.

 One could also state similar conjectures with three distinct marked points on the boundary (instead of one interior marked point and one marked point on the boundary) or/and by conditioning on the measures to have fixed volume (instead of the Boltzmann weight setting of conjecture \ref{conjecturecartes}). One could also state similar conjectures where the quadrangulation is chosen according to the partition function of a model of statistical physics (at critical temperature): in that case, the value of $\gamma$ in conjecture \ref{conjecturecartes} will depend on the model and can be read on the asymptotics of the partition function of the quadrangulation (in a way similar to the way we derived the relation $\gamma=\sqrt{\frac{8}{3}}$ for uniform quadrangulations).

Finally, let us mention that variants of the measures defined by \eqref{explicit} (where you fix three points on the boundary and condition on the volume of the bulk measure or the boundary) should be related (in a similar way as the sphere case) to the unit area quantum disk and the unit boundary length quantum disk which appear in \cite{DMS}.

\section{Appendix}

\subsection{Asymptotics of quadrangulations with a boundary}\label{mapasymp}
Here we take material from \cite{Bouttier} (see also \cite{CurMie}). Let $\mathcal{Q}_{n,p}$ denote quandrangulations of size $n$ with a simple boundary of length $2p$ and a marked point on the frontier. Then we have
\begin{equation*}
|\mathcal{Q}_{n,p}  |= \frac{1}{3^p}  \frac{(3p)!}{p! (2p-1)!} 3^n \frac{(2n+p-1)!}{(n-p+1)! (n+2p)!}.
\end{equation*} 
We are interested in the asymptotics of $|\mathcal{Q}_{n,p}  |$ as $n,p \to \infty$ with $\frac{p^2}{n}$ fixed. Notice that we have within this asymptotic:
\begin{align*}
(2n+p-1)! \sim & \sqrt{2 \pi} 2^{2n+p-1}  e^{(2n+p-1) \ln n+p-1+\frac{p^2}{4n}-(2n+p-1)}  \sqrt{2n},\\
(n-p+1)! \sim& \sqrt{2 \pi} e^{(n-p+1) \ln n -p+1+\frac{p^2}{2n}-(n-p+1)}  \sqrt{n},\\
(n+2p)! \sim &\sqrt{2 \pi}  e^{(n+2p) \ln n +2p +\frac{2p^2}{n}-(n+2p)}  \sqrt{n}  .
\end{align*}
Hence, we get that
\begin{equation*}
\frac{(2n+p-1)!}{(n-p+1)! (n+2p)!}  \sim  \sqrt{\frac{1}{\pi}}n^{-5/2} 2^{2n+p-1}  e^{-\frac{9p^2}{4n}}.
\end{equation*}
Also,
\begin{equation*}
\frac{(3p)!}{p! (2p-1)!} \sim  \frac{\sqrt{3}}{{ \sqrt{\pi}}}  \sqrt{p}   (\frac{27}{4})^p
\end{equation*}
in such way that we get
\begin{equation*}
|\mathcal{Q}_{n,p}  |  \sim  12^n (\frac{9}{2})^p n^{-5/2}  \frac{\sqrt{3p}}{2 \pi}    e^{- \frac{9 p^2}{4n}}.
\end{equation*}
Finally, if $\overline{\mathcal{Q}}_{n,p}$ denotes the set of quandrangulations of size $n$ with a simple boundary of length $2p$ with one marked point on the frontier and one marked point inside then we get
\begin{equation*}
|\overline{\mathcal{Q}}_{n,p}  |  \sim  e^{n \ln 12} e^{2p \ln \frac{3}{\sqrt{2}} } n^{-3/2}  \frac{\sqrt{3p}}{2 \pi}    e^{- \frac{9 (2p)^2}{16n}}.
\end{equation*}

\subsection{Some auxiliary estimates}\label{estimations}
Here we give hints for some estimates used in the proof of Theorem \ref{th:seiberg} and Proposition \ref{prop:part}. We stick to the notations used in this proof.
\begin{lemma}
On boundary behavior of the regularized Green function $G_\epsilon$: remember that $D_\epsilon$ is the disk of radius $\epsilon$ centered at $1-2\epsilon$, we claim that
$$\sup\limits_{\epsilon>0}\sup\limits_{x\in D_\epsilon}|G_\epsilon(x,x)+2\ln\epsilon|<+\infty .$$
As a consequence, one sees that if $x\in D_\epsilon$,
$$ |\E[X_{\partial \D,\epsilon}(x)^2]-2\ln \frac{1}{\epsilon}|\leq C,\quad |G_\epsilon (x,1)-2\ln\frac{1}{\epsilon}|\leq C.$$
\end{lemma}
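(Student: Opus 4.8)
The plan is to compute the double circle average of the Neumann Green function \eqref{GreenN} directly, using the split
$G(y,z)=\ln\frac1{|y-z|}+\ln\frac1{|1-y\bar z|}=:G_1(y,z)+G_2(y,z)$.
By the definition $G_\epsilon(x,y)=\E[X_{\partial\D,\epsilon}(x)X_{\partial\D,\epsilon}(y)]$, the quantity $G_\epsilon(x,x)=\E[X_{\partial\D,\epsilon}(x)^2]$ is exactly the average of $G$ over $A_\epsilon(x)\times A_\epsilon(x)$, so the two stated estimates on $\E[X_{\partial\D,\epsilon}(x)^2]$ and on $G_\epsilon(x,x)$ are one and the same. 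The first observation is geometric: for every $x$ in the closed disk $\bar D_\epsilon$ (center $1-2\epsilon$, radius $\epsilon$) the triangle inequality gives $1-3\epsilon\le|x|\le 1-\epsilon$, hence $\bar B(x,\epsilon)\subset\bar\D$, so $A_\epsilon(x)$ is the \emph{full} circle of radius $\epsilon$ about $x$, lying strictly inside $\D$ except at the single outer point $x=1-\epsilon$ (harmless, since the arc then misses only the point $1$, of zero length). Thus on all of $D_\epsilon$ we are in the interior circle-average regime.

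First I would treat $G_1$. Parametrising $y=x+\epsilon e^{i\theta}$, $z=x+\epsilon e^{i\theta'}$ gives $|y-z|=\epsilon|e^{i\theta}-e^{i\theta'}|$, and the vanishing of $\iint_{[0,2\pi]^2}\ln\frac1{|e^{i\theta}-e^{i\theta'}|}\,d\theta\,d\theta'$ (already used in the proof of Proposition \ref{circlegreen}) makes the double average of $G_1$ equal to $\ln\frac1\epsilon$ exactly. For $G_2$ the key device is that for fixed $z\in\D$ the map $y\mapsto-\ln|1-y\bar z|$ is harmonic on $\D$ (its only zero $y=1/\bar z$ has modulus $>1$), and likewise $z\mapsto-\ln|1-\bar x z|$ is harmonic. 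Iterating the mean value property over the two circles collapses the double average of $G_2$ to its value at the common center $x$, namely $-\ln(1-|x|^2)=\ln\frac1{1-|x|^2}$. Adding the two contributions yields the exact identity
\[
G_\epsilon(x,x)=\ln\frac1{\epsilon\,(1-|x|^2)},\qquad\text{so}\qquad G_\epsilon(x,x)+2\ln\epsilon=\ln\frac{\epsilon}{1-|x|^2}.
\]
Since $1-|x|^2=(1-|x|)(1+|x|)$ with $1-|x|\in[\epsilon,3\epsilon]$ and $1+|x|\in[1,2]$ for $\epsilon\le 1/3$, one gets $\epsilon\le 1-|x|^2\le 6\epsilon$, whence $|G_\epsilon(x,x)+2\ln\epsilon|\le\ln 6$ uniformly in $x\in D_\epsilon$ and small $\epsilon$. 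This proves the first claim and its restatement for $\E[X_{\partial\D,\epsilon}(x)^2]$.

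For the cross term $G_\epsilon(x,1)$ I average $G$ over the full circle $A_\epsilon(x)$ and over the half-circle $A_\epsilon(1)$ inside $\D$ about the boundary point $1$. The same harmonicity argument disposes of $G_2$: averaging in $y$ first gives $-\ln|1-x\bar w|$, and the subsequent $w$-average is controlled by the elementary bounds $\epsilon\le|1-x\bar w|\le 4\epsilon$ (lower bound from $|1-x\bar w|\ge 1-|x|\,|w|\ge 1-|x|\ge\epsilon$, upper bound from $1-x\bar w=(1-x)+x(1-\bar w)$ and $|1-w|=\epsilon$), so this contribution is $\ln\frac1\epsilon+O(1)$. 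For $G_1$ I rescale $y=x+\epsilon\eta$, $w=1+\epsilon\omega$, writing $x=1-2\epsilon+\epsilon u$ with $|u|\le1$; then $|y-w|=\epsilon|a+\eta-\omega|$ with $a=(x-1)/\epsilon=u-2$, $|a|\in[1,3]$, and the double average equals $\ln\frac1\epsilon$ minus the average of $\ln|a+\eta-\omega|$ over a fixed, $\epsilon$-independent circle/half-circle configuration, which stays uniformly bounded because $\ln|\cdot|$ is integrable against arc length even when the two circles meet. Hence $G_\epsilon(x,1)=2\ln\frac1\epsilon+O(1)$, giving the second consequence.

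The one genuinely delicate point is the image term $G_2$, which is exactly what produces the doubling $\ln\frac1\epsilon\mapsto 2\ln\frac1\epsilon$ as $x$ approaches the boundary: a naive Taylor expansion of $1-y\bar z$ near $1$ leaves an awkward remainder precisely in the borderline configurations (the outer point $x=1-\epsilon$, and $w\to1$ in the cross term). Replacing the expansion by the \emph{exact} harmonic mean value identity eliminates this difficulty entirely for $G_\epsilon(x,x)$, and reduces the cross term to the integrability of a logarithmic singularity on a compact, $\epsilon$-independent configuration — which is what delivers the uniformity in $x$ and in $\epsilon$.
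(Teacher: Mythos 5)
Your proof is correct, and for the key image term it takes a genuinely different route from the paper. The paper handles the $\ln\frac{1}{|1-x\bar y|}$ contribution by a dyadic scaling comparison (relating the regularized quantity at scale $\epsilon$ to that at scale $\epsilon/2$ and controlling the discrepancy), which yields $-\ln\epsilon+O(1)$ without an explicit formula; you instead exploit that $y\mapsto -\ln|1-y\bar z|$ and $z\mapsto-\ln|1-\bar x z|$ are harmonic on a neighbourhood of $\bar B(x,\epsilon)$ and apply the mean value property twice, which collapses the double circle average to the exact closed form $G_\epsilon(x,x)=\ln\frac{1}{\epsilon(1-|x|^2)}$; the uniform bound then drops out of $\epsilon\leq 1-|x|^2\leq 6\epsilon$ on $D_\epsilon$. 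This is cleaner and slightly stronger (an identity rather than an estimate), and it also makes transparent the consistency with Proposition \ref{circlegreen}. Your treatment of the cross term $G_\epsilon(x,1)$ — harmonicity for the image part plus the elementary sandwich $\epsilon\leq|1-x\bar w|\leq 4\epsilon$, and a rescaling to a fixed $\epsilon$-independent configuration for the $\ln\frac{1}{|y-w|}$ part — is likewise a direct computation where the paper only invokes ``a similar scaling relation.'' Two harmless imprecisions worth flagging: the arc $A_\epsilon(1)$ is not exactly a half-circle (the circle $|z-1|=\epsilon$ meets $\D$ in an arc of length between $\pi\epsilon$ and $2\pi\epsilon$), but since the average is normalized by the true arc length all your $O(1)$ bounds survive; and for the $\eta,\omega$-average of $\ln|a+\eta-\omega|$ you could even avoid the integrability discussion by noting that the inner average over the full circle $|\eta|=1$ equals $-\ln\max(|a-\omega|,1)$, which is manifestly bounded.
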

\begin{proof}
Let us calculate $G_\epsilon(x,x)$ for $\epsilon>0$ small enough. Recall that the non-regularized Green function $G(x,y)$ is the sum of $\ln\frac{1}{|x-y|}$ and $\ln\frac{1}{|1-x\overline{y}|}$. We have already seen that the $\epsilon$-regularization of $\ln\frac{1}{|x-y|}$ part of $G_\epsilon(x,x)$ will simply be $-\ln\epsilon$ as in the proof of proposition \ref{circlegreen}. Now for the $\ln\frac{1}{|1-x\overline{y}|}$ part, we remark a scaling relation: we can compare what is happening at $\epsilon$ with that at $\epsilon/2$ via the following observation (with $a,b>0$ both small of order $\epsilon$)
$$\ln\frac{|a/2+b/2-ab/4|}{|a+b-ab|}-\ln\frac{1}{2}=\ln\frac{|a+b-ab/2|}{|a+b-ab|}\asymp \frac{|ab/2|}{|a+b-ab|}\leq |a|$$
By taking $a=1-(x+\epsilon e^{i\theta})$ and $b=1-(\overline{x}-\epsilon e^{i\theta'})$ we can establish
$$\sup\limits_{\epsilon>0}\sup\limits_{x\in D_\epsilon}|\frac{1}{4\pi^2}\int_{\mathds{S}_1}\int_{\mathds{S}_1}\ln\frac{1}{|1-(x+\epsilon e^{i\theta})(\overline{x}+e^{i\theta'})|}d\theta d\theta' +\ln\epsilon|<+\infty$$
Together we get the first part of the lemma.\\
The first inequality in the second part of the lemma comes as a direct consequence. The second inequality can be proved using a similar scaling relation as in the above proof.
\end{proof}

Now we establish another estimate concerning the process $Y_\epsilon$. Recall that $Y_\epsilon$ is the Gaussian process defined as $Y_\epsilon(u)=X_{\partial\mathds{D},\epsilon}(1-\epsilon u)-X_{\partial\mathds{D},\epsilon}(1)$ and $D(2,1)$ is the disk centered at 2 with radius $1$.
\begin{lemma}
For all $z,z'\in D(2,1)$,
$$\mathds{E}[(Y_\epsilon(z)-Y_\epsilon(z'))^2]\leq C|z-z'|$$
uniformly in $0<\epsilon\leq 1$.
\end{lemma}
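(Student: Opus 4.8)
The plan is to pass to the covariance kernel and exploit the exact reflection structure of the Neumann Green function $G$. Writing $x=1-\epsilon z$ and $x'=1-\epsilon z'$, the anchor term $X_{\partial\D,\epsilon}(1)$ cancels in the increment, so $Y_\epsilon(z)-Y_\epsilon(z')=X_{\partial\D,\epsilon}(x)-X_{\partial\D,\epsilon}(x')$ and
$$\E[(Y_\epsilon(z)-Y_\epsilon(z'))^2]=\iint G(u,v)\,dm(u)\,dm(v),\qquad m:=\rho_{x,\epsilon}-\rho_{x',\epsilon},$$
where $\rho_{a,\epsilon}$ denotes the normalized arc-length measure on $A_\epsilon(a)$. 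The first thing I would record is an elementary geometric fact: for $z\in D(2,1)$ one has $\mathrm{Re}(z)\in[1,3]$, whence $\mathrm{dist}(x,\partial\D)=1-|1-\epsilon z|\geq\epsilon$ for $\epsilon$ small (with equality only at the corner $z=1$). Thus $\rho_{x,\epsilon}$ is the full circle-average measure, $|x-x'|=\epsilon|z-z'|\leq 2\epsilon$, and $m$ is a signed measure of total mass $0$ supported within $O(\epsilon)$ of the boundary point $1$.

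Next I would use the identity $|1-u\bar v|=|v|\,|u-v^*|$, with $v^*=1/\bar v$ the inversion of $v$ in the unit circle, to split the kernel as $G(u,v)=-\ln|u-v|-\ln|v|-\ln|u-v^*|$. Because $m$ has mass $0$, the purely-$v$ term drops out and
$$\E[(Y_\epsilon(z)-Y_\epsilon(z'))^2]=\mathcal{E}(m)+\langle m,m^*\rangle,$$
where $\mathcal{E}(\mu)=\iint-\ln|u-v|\,d\mu(u)\,d\mu(v)$ is the logarithmic energy, $\langle\cdot,\cdot\rangle$ its polarization, and $m^*$ is the push-forward of $m$ under $v\mapsto v^*$. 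The point of this rewriting is that both pieces are now logarithmic energies of mass-zero arc-measures at scale $\epsilon$.

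For the diagonal piece I would rescale by $\epsilon$ about $x$: under $w=(u-x)/\epsilon$ the measure $\rho_{x,\epsilon}$ becomes the uniform measure $\sigma_0$ on the unit circle and $\rho_{x',\epsilon}$ the uniform measure $\sigma_\delta$ on the unit circle centered at $\delta=(x'-x)/\epsilon$, with $|\delta|=|z-z'|\leq 2$. Since $m$ has mass $0$ the factor $-\ln\epsilon$ produced by the rescaling cancels, leaving $\mathcal{E}(m)=\mathcal{E}(\sigma_0-\sigma_\delta)$. A direct computation (using $\iint\ln|e^{i\theta}-e^{i\phi}|\,d\theta\,d\phi=0$ and expanding the remaining one-dimensional integral) gives $\mathcal{E}(\sigma_0-\sigma_\delta)=\tfrac{2}{\pi}|\delta|+O(|\delta|^3)\leq C|z-z'|$; this is the heart of the \emph{linear}-in-$|z-z'|$ bound and encodes that circle averages are not Lipschitz but only Hölder-$\tfrac12$ below their own scale. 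The reflected piece I would handle by Cauchy--Schwarz for the positive-definite logarithmic energy, $\langle m,m^*\rangle\leq\mathcal{E}(m)^{1/2}\mathcal{E}(m^*)^{1/2}$. Since $v\mapsto v^*$ is bi-Lipschitz with distortion tending to $1$ on the $O(\epsilon)$-neighbourhood of $\partial\D$ where $m$ lives, $m^*$ is again a difference of two arc-like measures of scale $\epsilon$ whose centres are $O(|z-z'|)$ apart, so the same rescaling argument yields $\mathcal{E}(m^*)\leq C|z-z'|$. Combining gives $\E[(Y_\epsilon(z)-Y_\epsilon(z'))^2]\leq C|z-z'|$.

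The step I expect to be the main obstacle is the uniformity of all constants as $z,z'\to 1$, i.e.\ at the corner of $D(2,1)$ mapped to the tangency $\mathrm{dist}(x,\partial\D)=\epsilon$. There the reflected point $v^*$ approaches the support of $m$, the kernel $-\ln|u-v^*|$ comes within distance $\asymp\epsilon$ of its singularity, and the ``doubling'' of the field becomes effective; I would control this by keeping the exact lower bound $\mathrm{dist}(x,\partial\D)\geq\epsilon$ (which guarantees the reflected circles stay a scale-$\epsilon$ distance from the original ones) and by verifying that the bi-Lipschitz constants of the inversion on $A_\epsilon(x)$ remain bounded uniformly in $\epsilon$ and in $z\in D(2,1)$. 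The companion bound on $\mathrm{Var}(Y_\epsilon(2))$ then follows from the same energy estimates applied with the fixed anchor, using that $\E[X_{\partial\D,\epsilon}(x)^2]$ and $G_\epsilon(x,1)$ are both $2\ln\tfrac1\epsilon+O(1)$ near the boundary.
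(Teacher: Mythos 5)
Your argument is correct in substance but takes a genuinely different route from the paper's. The paper works directly with the regularized Green function: it reduces the claim to $|G_\epsilon(x,x)-G_\epsilon(x,x')|\leq C|z-z'|$ with $x=1-\epsilon z$, $x'=1-\epsilon z'$, handles the $\ln\frac{1}{|u-v|}$ part via the explicit circle average of the logarithm (the function $r\mapsto \ln_+ r$ being Lipschitz), and handles the reflected part $\ln\frac{1}{|1-u\bar v|}$ by rescaling and bounding the derivative of the resulting double integral in the displacement parameter $t=\overline{(y-x)/\epsilon}$ uniformly for $|t|\leq 2$. You instead recast the variance as the logarithmic energy of the mass-zero measure $m=\rho_{x,\epsilon}-\rho_{x',\epsilon}$ plus a reflected cross term; your exact computation of $\mathcal{E}(\sigma_0-\sigma_\delta)=O(|\delta|)$ is essentially the same calculation as the paper's first step, but your use of Cauchy--Schwarz for the conditionally positive definite log kernel lets you avoid differentiating the reflected kernel and tracking its near-singularity at the tangency point, which is precisely the delicate part of the paper's second computation. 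The one step you should tighten is the bound $\mathcal{E}(m^*)\leq C|z-z'|$: the pushforward of the uniform measure on a circle under $v\mapsto 1/\bar v$ lands on a circle but is \emph{not} the uniform measure on it, so ``the same rescaling argument'' does not literally apply. Fortunately you do not need any distortion estimate: since $|u^*-v^*|=|u-v|/(|u|\,|v|)$ and $m$ has total mass zero, the terms $\ln|u|+\ln|v|$ integrate to zero against $dm\otimes dm$ and you get the exact identity $\mathcal{E}(m^*)=\mathcal{E}(m)$, whence $\langle m,m^*\rangle\leq\mathcal{E}(m)\leq C|z-z'|$. With that substitution your proof is complete, uniform over the corner $z=1$, and arguably cleaner than the paper's.
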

\begin{proof}
It suffices to prove that uniformly in $\epsilon$,
$$|G_\epsilon(1-\epsilon z,1-\epsilon z)-G_\epsilon(1-\epsilon z,1-\epsilon z')|\leq C|z-z'| .$$
For the $\ln\frac{1}{|x-y|}$ part of $G$, it suffices to prove that the following function is Lipschitz in $r$ for $r\in[0,2]$
$$f(r)=\frac{1}{4\pi^2}\int_{\mathds{S}_1}\int_{\mathds{S}_1}\ln\frac{1}{|e^{i\theta}-re^{i\theta'}|}d\theta d\theta'$$
notice that $f(1)=0$. But we have already seen that $f(r)=0$ when $r\leq 1$ and this implies that $f(r)=\ln r$ when $r>1$.\\
As of the $\ln\frac{1}{|1-x\overline{y}|}$ part, we will write the difference as
$$\frac{1}{4\pi^2}\int_{\mathds{S}_1}\int_{\mathds{S}_1}\ln\frac{|1-(x+\epsilon e^{i\theta})((\overline{y}-\overline{x})+\overline{x}+\epsilon e^{i\theta'})|}{|1-(x+\epsilon e^{i\theta})(\overline{x}+\epsilon e^{i\theta'})|}d\theta d\theta'$$
where $x=1-\epsilon z$ and $y=1-\epsilon z'$. Then we note $t=\overline{\frac{y-x}{\epsilon}}$ and this becomes
$$\frac{1}{4\pi^2}\int_{\mathds{S}_1}\int_{\mathds{S}_1}\ln\frac{|1/\epsilon^2-(x/\epsilon+e^{i\theta})(t+\overline{x}/\epsilon+e^{i\theta'})|}{|1/\epsilon^2-(x/\epsilon+e^{i\theta})(\overline{x}/\epsilon+e^{i\theta'})|}d\theta d\theta'$$
As the derivative with respect to $t$ is continuous and uniformly bounded in $\epsilon$ for $|t|\leq 2$, our proof is complete.
\end{proof}

\subsection{Moment estimates on Gaussian multiplicative chaos}\label{momentestimates}

\subsubsection{Moment estimate for an insertion on the boundary}
To define properly unit volume Liouville measures in Corollary~\ref{unitvolume}, we need several moment estimates on the GMC measure on the unit disk that we study in this section.\\
The techniques of proof for different values $\gamma$ are not the same: we seperate the regimes $\gamma\in]0,\sqrt{2}[$ (Lemma~\ref{lemmomapp}) and $\gamma\in[\sqrt{2},2[$ (Lemma~\ref{lemmomapp2}). The main difference from the unit volume Liouville measure on the Riemann sphere (see \cite[Section 3.4]{DKRV}) is that we have to give moment bounds of same kind of non classical GMC measure near the boundary (with or without insertions at the boundary), where, as we will explain after, the $2d$-GMC measure collapses and behaves like a $1d$-GMC measure: analysing this transition carefully is the goal of Lemma~\ref{lemmomapp} and Lemma~\ref{lemmomapp2}.\\
~\\
We first look at the regime $\gamma\in]0,\sqrt{2}[$ and give a necessary and sufficient condition for the finiteness of the GMC measure on the unit disk.
\begin{lemma}\label{lemmomapp}
Let $\gamma \in ]0,\sqrt{2}[$. Consider $\beta < Q$ and $s \in \partial \D$. Then for all $p>0$, $\delta>0$,
\begin{equation}\label{eqmomapp}
\E \left [\left ( \int_{\D\cap B(s,\delta)}e^{\gamma\frac{\beta}{2}G(\cdot,s)} e^{\gamma X_{\partial \D }-\frac{\gamma^2}{2}\E[X_{\partial \D }^2]} g_P^{\frac{\gamma^2}{4}}\,d\lambda \right )^p  \right ]<+\infty
\end{equation}
if and only if $p< \frac{2}{\gamma^2}  \wedge \frac{1}{\gamma} (Q-\beta)$.
\end{lemma}

\begin{remark}\label{halfplanekernel}
In the following of this section, we will work with an exact scale-invariant Gaussian process on $\H$ with covariance
$$G_\H(x,y)=\ln\frac{R}{|x-y||x-\bar{y}|}.$$
This is not exactly the same kernel as the process we will consider in the half-plane geometry: they differ by at most a finite constant near $0$. However, by Kahane's inequality, they possess the same moment bound for Lemma~\ref{lemmomapp} and Lemma~\ref{lemmomapp2}. Furthermore, restricted to a small compact near $0$, $G_\H$ is indeed positive definite: one can first sample a log-correlated Gaussian field on $B(0,\epsilon)\cap\C$ with correlation $\ln^{+}\frac{1}{|x-y|}$ (this is indeed a definite-positive kernel, see \cite{vincent}) for a \textbf{fixed} small $\epsilon$, then define the Gaussian field on $B(0,\epsilon)\cap\H$ by assigning $\frac{X(z)+X(\overline{z})}{\sqrt{2}}$ at point $z$: this new field has correlation $G_\H$ on $B(0,\epsilon)\cap\H$. This is a standard procedure, see for example \cite[Section~3.2]{Sheff}.
\end{remark}

\proof
By rotational invariance of the problem, we can suppose that $s=-1$. Let $\psi(z)=\frac{z-i}{z+i}$  be the Cayley transform 
which maps the upper half-plane onto the unit disk and sends $0$ to $-1$. We set $G_\H(x,y)=G(\psi(x),\psi(y))$ and $X=X_{\partial\D}\circ \psi$ which has covariance $G_\H$.\\
In light of the above Remark~\ref{halfplanekernel}, we will work with the following form of $G_\H$:
$$G_\H(x,y)=\ln\frac{1}{|x-y||x-\bar{y}|},$$
we have that for $x,y\in \H$ and all $r\in]0,1[$,
\begin{equation}\label{exactscaling}
G_\H(rx,ry)= G_\H(x,y)+2\ln\frac{1}{r},
\end{equation}
which is an exact scaling relation.\\
With these notations, by conformal invariance, it is equivalent to study whether
\begin{equation}
\E \left [ \left (\int_{S}e^{\gamma\frac{\beta}{2}G_\H(z,0)} e^{\gamma X(z) -\frac{\gamma^2}{2}\E[X^2(z)]} \frac{1}{{\rm Im}(z)^{\frac{\gamma^2}{2}}} \, \lambda(dz) \right )^p \right ]<+\infty,
\end{equation}
where $S$ denotes some small square $[-\epsilon,\epsilon]\times[0,2\epsilon]$ in $\H$ such that $G_\H$ is definite positive on $S$.\\
$\epsilon$ will be fixed in the rest of the section, it is solely chosen to ensure that $G_{\H}$ is well-defined.\\
Let $X_{1/2^n}$ denotes a $2^{-n}$-regularisation of $X$ by convolution with a mollifier of compact support (for instance the $2^{-n}$-circle average) and note
\begin{equation}
J_n(B)=\int_{B}e^{\gamma\frac{\beta}{2}G_\H(z,0)} e^{\gamma X_{1/2^n}(z) -\frac{\gamma^2}{2}\E[X_{1/2^n}^2(z)]} \frac{1}{{\rm Im}(z)^{\frac{\gamma^2}{2}}} \, \lambda(dz)
\end{equation}
for all borelians $B$ in $\H$.
We prove that:
\begin{equation}\label{mesuresup}
\limsup\limits_{n}\E\left[J_n(S)^p\right]<+\infty,
\end{equation}
if and only if $p< \frac{2}{\gamma^2}  \wedge \frac{1}{\gamma} (Q-\beta)$. One then follows \cite[Section A.]{DKRV} to finish off the proof.\\

\vspace{0.2 cm}

\noindent In the following we denote by
\begin{equation}\label{scalingexponent}
\overline{\zeta}(p)=(2+\gamma^2/2)p-\gamma^2p^2
\end{equation}
the scaling exponent for the measure $J_n$.\\
We first study the quantity in equation~(\ref{mesuresup}) in the case $\beta=0$ (i.e. no insertion at the boundary), then we pass to the case of general $\beta$ by slightly modifying an argument in \cite[Section A.]{DKRV}, for which we briefly recall the details in the following.\\
~\\
\emph{The case $\beta=0$:}\\
Remark that $\frac{2}{\gamma^2}<\frac{Q}{\gamma}$ such that in the case $\beta=0$, we only need to prove that :
\begin{equation}
\limsup\limits_{n}\E\left[J_n(S)^p\right]<+\infty \quad \text{if and only if} \quad p< \frac{2}{\gamma^2}.
\end{equation}

\noindent One checks by Fubini that with $\gamma\in ]0,\sqrt{2}[$, $J_n(S)$ possesses finite first moment
\begin{equation}
\E \left [ \int_{S}e^{\gamma X(z) -\frac{\gamma^2}{2}\E[X^2(z)]} \frac{1}{{\rm Im}(z)^{\frac{\gamma^2}{2}}} \, \lambda(dz) \right ]<+\infty,
\end{equation}
so we will only focus on values of $p>1$ in the regime $\gamma\in ]0,\sqrt{2}[$.\\
We will decompose $S$ into three parts and use scaling relations to explore the multifractal structure of $X$ and the associated mesure $J_n$. Let
$$S_1=[-\epsilon,0]\times[0,\epsilon], S_2=[0,\epsilon]\times[0,\epsilon], S_3=[-\epsilon,\epsilon]\times[\epsilon,2\epsilon]$$
and remark that, by scaling
\begin{equation}\label{scalingexact}
\E\left[J_{n+1}(S_1)^p\right]=2^{-\overline{\zeta}(p)}\E\left[J_n(S)^p\right]
\end{equation}
where $\overline{\zeta}(p)$ is the half-plane scaling exponent for the kernel $G_\H$ as defined in equation~(\ref{scalingexponent}).\\
~\\
Before entering the proof, we first provide three estimations on the correlations of $J_n$. The motivation behind these estimations is that we want to have some kind of ``decorrelation'' inequalities in order to compare some cross-terms with leading terms in polynomial expansions.\\
Let $p,q>0$ and $A=[a^l_1,a^r_1]\times[a^l_2,a^r_2]$, $B=[b^l_1,b^r_1]\times[b^l_2,b^r_2]$ two rectangles in $S$. We call $A$ and $B$ adjacent if $a^r_1=b^l_1$ (the right boundary of $A$ is at the same level as the left boundary of $B$).
\begin{lemma}[Cutoff estimation: non-adjacent case]\label{cutoffnonadjacent}~\\
Let $p,q>0$. Suppose that $a^r_1<b^l_1$, so that $dist(A,B)=\delta>0$. Then
\begin{equation}
\mathds{E}[J_n(A)^p]\mathds{E}[J_n(B)^q]\leq \mathds{E}[J_n(A)^p J_n(B)^q]\leq \delta^{-2pq\gamma^2}\mathds{E}[J_n(A)^p]\mathds{E}[J_n(B)^q].
\end{equation}
\end{lemma}
\begin{lemma}[Cutoff estimation: adjacent to non-adjacent case]\label{cutoffnontooui}~\\
Let $p,q>0$. Suppose that $a^r_1=b^l_1$. Let $B_t=[b^l_1+t,b^r_1+t]\times[b^l_2,b^r_2]$ be a translation of $B$ on the first coordinate with $t>0$ (so that we are moving away from $A$). Then there exists some constant $C$ that does not depend on $n$ such that
\begin{equation}
\mathds{E}[J_n(A)^p J_n(B_t)^q]\leq C\mathds{E}[J_n(A)^p J_n(B)^q].
\end{equation}
\end{lemma}
\begin{lemma}[Cutoff estimation: symmetric case]\label{cutoffsymmetry}~\\
Let $p,q>0$. Let $H_L=\{x\in\R^2; x_1=L\}$ the hyperplan of $\R^2$ with the first coordiante equal to $L$. Let $\overline{B}$ be the symmetry of $B$ with respect to $H=H_{a^r_1}=H_{b^l_1}$. Then
\begin{equation}
\mathds{E}[J_n(A)^p J_n(B)^q]\leq\mathds{E}[J_n(A)^p J_n(\overline{B})^q].
\end{equation}
\end{lemma}
\begin{lemma}[Cutoff estimation: adjacent case]\label{cutoffadjacent}~\\
Let $p,q>0$. Suppose that $a^r_1=b^l_1$. Let $B_t=[b^l_1+t,b^r_1+t]\times[b^l_2,b^r_2]$ a translation of $B$ on the first coordinate with $t<0$ (so that we are moving towards $A$) such that $[b^l_1+t,b^r_1+t]\subset[a^l_1,a^r_1]$. Then there exists some constant $C$ that does not depend on $n$ such that
\begin{equation}
\mathds{E}[J_n(A)^p J_n(B)^q]\leq C\mathds{E}[J_n(A)^p J_n(B_t)^q].
\end{equation}
\end{lemma}
\noindent For continuity of lecture, we postpone the proofs of these lemmas until the end of this section. We now explain how to finish the $\beta=0$ case using these estimations.\\
~\\
$\bullet$ Suppose $\limsup\limits_{n}\E\left[J_n(S)^p\right]<+\infty$ and let us prove that $p<2/\gamma^2$.\\
If $p\geq 4/\gamma^2$, we already know since Kahane that the $p$-th moment of a $2d$-GMC measure explodes. Thus, when $p\geq 4/\gamma^2$, $\E\left[J_n(S)^p\right]$ explodes in the limit since $\E\left[J_n(S_3)^p\right]$ does.\\
Let $1\leq p< 4/\gamma^2$ so that $\E\left[J_n(S_3)^p\right]$ converges to some finite quantity by Kahane. By super-additivity and scale invariance (equation~(\ref{scalingexponent})), we have
\begin{align*}
&\E\left[J_n(S)^p\right]\\
\geq&\E\left[J_n(S_1)^p\right]+\E\left[J_n(S_2)^p\right]+\E\left[J_n(S_3)^p\right]\\
\geq&2^{1-\overline{\zeta}(p)}\E\left[J_{n-1}(S)^p\right]+C
\end{align*}
for some constant $C>0$ independent of $n$. Since $1-\overline{\zeta}(p)\geq 0$ when $p\geq 2/\gamma^2$, necessarily $p<2/\gamma^2$ if $\E\left[J_n(S)^p\right]$ is uniformly bounded in $n$.\\
~\\
$\bullet$ Suppose $p<2/\gamma^2$ and prove $\limsup\limits_{n}\E\left[J_n(S)^p\right]<+\infty$.\\
We will show this by induction on $p$: suppose $\limsup\limits_{n}\E\left[J_n(S)^k\right]<+\infty$ for some integer $k$ and we prove $\limsup\limits_{n}\E\left[J_n(S)^p\right]<+\infty$ for all $p<2/\gamma^2$ such that $k<p\leq k+1$. We want to point out that this is a refinement of the arguments in \cite{KahPey}.\\
First we use Muirhead's inequality (see Corollary~\ref{ExpansionInequality}) to write
\begin{align*}
&\E\left[J_n(S_1+S_2)^p\right]\\
\leq&\E\left[J_n(S_1)^p\right]+\E\left[J_n(S_2)^p\right]+C(\E[J_n(S_1)^k J_n(S_2)^{p-k}]+\E[J_n(S_1)^{p-k} J_n(S_2)^{k}])\\
={}&2\E\left[J_n(S_1)^p\right]+C\E[J_n(S_1)^k J_n(S_2)^{p-k}]
\end{align*}
where we used the symmetry of $J_n$ on $S_1$ and $S_2$. Now consider some large integer that we denote by $\delta^{-1}$ (so that $\delta$ is small), and cut $S_1$ into $\delta^{-1}$ equal parts of the form
$$S^i_1=[-i\delta\epsilon,-(i-1)\delta\epsilon]\times[0,\epsilon],\quad i=1,\dots,\delta^{-1}.$$
We also cut $S_2$ into two parts
$$S^l_2=[0,\delta\epsilon]\times[0,1],\quad S^r_2=[\delta\epsilon,\epsilon]\times[0,\epsilon].$$
Notice that $S^l_2$ and $S_1$ are adjacent in the sense of Lemma~\ref{cutoffadjacent}. It follows from the same lemma, since $S^i_1$ is a horizontal translation of $S^l_2$, that for all $i=1,\dots,\delta^{-1}$,
$$\E[J_n(S_1)^k J_n(S^l_2)^{p-k}]\leq C\E[J_n(S_1)^k J_n(S^i_1)^{p-k}]$$
for some $C$ independent of $n$.\\
Since $0<p-k\leq 1$, using Jensen's inequlity for the concave function $x\mapsto x^{p-k}$,
\begin{align*}
&\E[J_n(S_1)^p]\\
\geq&\E[J_n(S_1)^k J_n(S_1)^{p-k}]\\
\geq&\delta^{1-(p-k)}\sum\limits_{i}\E[J_n(S_1)^k J_n(S^i_1)^{p-k}]\\
\geq&C\delta^{-(p-k)}\E[J_n(S_1)^k J_n(S^l_2)^{p-k}].
\end{align*}
On the other hand, we can compare $\E[J_n(S_1)^k J_n(S^r_2)^{p-k}]$ and $\E[J_n(S_1)^p]$ via Lemma~\ref{cutoffnonadjacent}: applying this lemma to $S^r_2$ and $S_1$,
\begin{align*}
&\E[J_n(S_1)^k J_n(S^r_2)^{p-k}]\\
\leq&C\delta^{-2p(p-k)\gamma^2}\E[J_n(S_1)^k]\E[J_n(S^r_2)^{p-k}]\\
\leq&C\delta^{-2p(p-k)\gamma^2}.
\end{align*}
where we used the finiteness for the $k$-th moment and the $(p-k)$-th moment thanks to induction hypothesis, and $C$ always independent of $n$.\\
Gathering all pieces of information above, we get by sub-additivity,
\begin{align*}
&\E[J_n(S_1+S_2)^p]\\
\leq&2\E\left[J_n(S_1)^p\right]+C\E[J_n(S_1)^k J_n(S_2)^{p-k}]\\
\leq&2\E\left[J_n(S_1)^p\right]+C\E[J_n(S_1)^k J_n(S^l_2)^{p-k}]+C\E[J_n(S_1)^k J_n(S^r_2)^{p-k}]\\
\leq&(2+C\delta^{p-k})\E\left[J_n(S_1)^p\right]+C\delta^{-2p(p-k)\gamma^2}
\end{align*}
where $C$ changes from line to line but is always independent of $n$.\\
Since $1\leq p<2/\gamma^2$, one checks that it is possible to choose (independently of $n$) a $\delta$ such that $\alpha=2^{-\overline{\zeta}(p)}(2+C\delta^{p-k})<1$. With this choice, we get
\begin{equation}
\E[J_n(S_1+S_2)^{p}]\leq\alpha\E[J_{n-1}(S)^p]+C.
\end{equation}
Finally, applying Minkowski's inequality to $J_n(S)$ with the cutting $S_1,S_2,S_3$, we get, for $1\leq p<2/\gamma^2$,
\begin{align*}
&\E[J_n(S)^p]^{1/p}\\
\leq&(\E\left[J_n(S_1+S_2)^p\right])^{1/p}+\E[J_n(S_3)^p]^{1/p}\\
\leq&\left(\alpha\E\left[J_{n-1}(S)^p\right]+C\right)^{1/p}+C
\end{align*}
again, the estimate on $S_3$ is classical since Kahane, and $C>0$ independent of $n$.\\
By sub-additivity,
\begin{align*}
&\E[J_n(S)^p]^{1/p}\\
\leq&\alpha^{1/p}\E\left[J_{n-1}(S)^p\right]^{1/p}+C^{1/p}+C.
\end{align*}
With $\alpha<1$, this ensures that the sequence $\E[J_n(S)^p]^{1/p}$ is uniformly bounded in $n$ (and so is $\E[J_n(S)^p]$), thus completes our proof for the $\beta=0$ case.\\
~\\
\emph{The case $\beta$ general:}
We now add an insertion of weight $\beta$ at the boundary of $\H$ at $0$. We study directly the field $X$ and exploit its exact scaling relation as in \cite[Section A.]{DKRV}.\\
Let
\begin{equation}
I(B)=\E \left [\left ( \int_{B} e^{\gamma\frac{\beta}{2}G_{\H}(\cdot,s)} e^{\gamma X_{\partial \D }-\frac{\gamma^2}{2}\E[X_{\partial \D }^2]} g_P^{\frac{\gamma^2}{4}}\,d\lambda \right )^p  \right ]
\end{equation}
for borelians $B$ in $\H$.\\
Let us fix a small $\epsilon$ such that $G_\H$ on $B(0,\epsilon)$ is well-defined.\\
Let $A=\H\cap B(0,\epsilon)$ and $A_i=\H\cap(B(0,2^{-i}\epsilon)\backslash B(0,2^{-n-1}\epsilon))$ for all $i=0,1,\dots$, a partition of $A$ into half-annulis. We know by the case $\beta=0$ that for all $i$, $I(A_i)<\infty$ iff $p<\frac{2}{\gamma^2}$. Furthermore, by exact-scaling,
$$I(A_i)=2^{p\gamma\beta}2^{-\overline{\zeta}(p)}I(A_{i-1})$$
and remark that $f(p)=p\gamma\beta-\overline{\zeta}(p)$ changes sign at $p=\frac{Q-\beta}{\gamma}$.\\
Suppose that $p>1$, then by super-additivity,
\begin{equation}
I(A)\geq \sum\limits_{i}I(A_i),
\end{equation}
from which one deduces that $I(A)<\infty$ implies $p<\frac{Q-\beta}{\gamma}$. On the other hand, by Minkowski's inequality,
\begin{equation}
I(A)^{1/p}\leq \sum\limits_{i}I(A_i)^{1/p},
\end{equation}
from which one deduces that $p<\frac{Q-\beta}{\gamma}$ implies $I(A)<\infty$.
\qed\\
~\\
We now turn our attention to the regime $\gamma\in[\sqrt{2},2[$. Here we deal with moments of order $0<p<1$, which is something quite unusual in the GMC theory: see the remark below the following lemma.
\begin{lemma}\label{lemmomapp2}
Let $\gamma \in [\sqrt{2},2[$. Consider $\beta < Q$ and $s \in \partial \D$. Then:\\
i)) For all $p>0$, $\delta>0$,
\begin{equation*}
\E \left [\left ( \int_{\D\cap B(s,\delta)}e^{\gamma\frac{\beta}{2}G(\cdot,s)} e^{\gamma X_{\partial \D }-\frac{\gamma^2}{2}\E[X_{\partial \D }^2]} g_P^{\frac{\gamma^2}{4}}\,d\lambda \right )^p  \right ]<+\infty
\end{equation*}
if $p< \frac{2}{\gamma^2}  \wedge \frac{1}{\gamma} (Q-\beta)$.\\
ii) Conversely, if
\begin{equation*}
\E \left [\left ( \int_{\D\cap B(s,\delta)}e^{\gamma\frac{\beta}{2}G(\cdot,s)} e^{\gamma X_{\partial \D }-\frac{\gamma^2}{2}\E[X_{\partial \D }^2]} g_P^{\frac{\gamma^2}{4}}\,d\lambda \right )^p  \right ]<+\infty
\end{equation*}
then $p\leq (\frac{1}{2}+\frac{1}{\gamma^2})  \wedge \frac{1}{\gamma} (Q-\beta)$.
\end{lemma}
\begin{remark}
It is plausible to think that we can improve the bound in ii) to obtain the same moment bound as in the $\gamma\in[0,\sqrt{2}[$ regime. This would require some sharper estimations that are not studied in the classical GMC theory -- because we are dealing with moments smaller than $1$, which always exist for classical GMC measures (since the first moment is either finite (in sub-critical phase) or zero if the field degenerates (in critical or super-critical case) for classical GMC measures).
\end{remark}
\proof We put ourselves in the same settings as in the proof of the previous lemma. In particular we work with the half-plane representation. We also follow the same notations.\\
$\bullet$ We first look at assertion i).\\
\emph{The case $\beta=0$:} the bound in the $\beta=0$ is reduced to $p<\frac{2}{\gamma^2}<1$.\\
First suppose that $p<\frac{2}{\gamma^2}$ and prove that the $p$-th moment is finite. Since $p<\frac{2}{\gamma^2}\leq 1$, we can make use the sub-addivitiy inequality and exact scale-invariance to write
\begin{align*}
&\E[J_{n+1}(S)^p]\\
\leq&\E[J_{n+1}(S_1+S_2+S_3)^p]\\
\leq&\E[J_{n+1}(S_1)^p]+\E[J_{n+1}(S_2)^p]+\E[J_{n+1}(S_3)^p]\\
\leq&2^{1-\overline{\zeta}(p)}\E[J_{n}(S)^p]+C
\end{align*}
where $C$ is some finite constant independent of $n$.\\
With the assumption that $\gamma<2$ we have $\frac{1}{2}<\frac{2}{\gamma^2}$. Choose some $p'>p$ such that $\frac{1}{2}<p'<\frac{2}{\gamma^2}$ and observe that $1-\overline{\zeta}(p')<0$, thus from the above relation, the $p'$-th moment of the measure $J(S)$ exists and is finite (because we would have $2^{1-\overline{\zeta}(p')}<1$). Since $p<p'$, the $p$-th moment of the measure $J(S)$ exists and is finite.\\
\emph{The case $\beta$ general:}\\
For completeness, we sketch a proof here although it is essentially the same as in \cite[Section A.]{DKRV} and as the proof for general boundary $\beta$-insertions in the $\gamma\in(0,\sqrt{2})$ regime above.\\
Let $p<1$ and
\begin{equation}
I(B)=\E \left [\left ( \int_{B} e^{\gamma\frac{\beta}{2}G_{\H}(\cdot,s)} e^{\gamma X_{\partial \D }-\frac{\gamma^2}{2}\E[X_{\partial \D }^2]} g_P^{\frac{\gamma^2}{4}}\,d\lambda \right )^p  \right ]
\end{equation}
for borelians $B$ in $\H$.\\
Let us fix a small $\epsilon$ such that $G_\H$ on $B(0,\epsilon)$ is well-defined.\\
Let $A=\H\cap B(0,\epsilon)$ and $A_i=\H\cap(B(0,2^{-i}\epsilon)\backslash B(0,2^{-n-1}\epsilon))$ for all $i=0,1,\dots$, a partition of $A$ into half-annulis. We know by the case $\beta=0$ that for all $i$, $I(A_i)<\infty$ if $p<\frac{2}{\gamma^2}$. Furthermore, by exact-scaling,
$$I(A_i)=2^{p\gamma\beta}2^{-\overline{\zeta}(p)}I(A_{i-1})$$
and remark that $f(p)=p\gamma\beta-\overline{\zeta}(p)$ changes sign at $p=\frac{Q-\beta}{\gamma}$.\\
Suppose that $p<1$, then by sub-additivity,
\begin{equation}
I(A)\leq \sum\limits_{i}I(A_i),
\end{equation}
from which one deduces that $p<\frac{Q-\beta}{\gamma}$ implies $I(A)<\infty$.\\
~\\
$\bullet$ We now turn our attention to ii).\\
\emph{The case $\beta=0$:}\\
Since $p<\frac{2}{\gamma^2}\leq 1$, we can make use the sub-addivitiy inequality, exact scale-invariance, then Jensen's inequality to write
\begin{align*}
&\E[J_{n+1}(S)^p]\\
\geq&\E[J_{n+1}(S_1+S_2)^p]\\
\geq&2^{p-1}(\E[J_{n+1}(S_1)^p]+\E[J_{n+1}(S_2)^p])\\
\geq&2^p2^{-\overline{\zeta}(p)}\E[J_{n}(S)^p]
\end{align*}
Studying the sign of $p-\overline{\zeta}(p)$ gives us the bound $p\leq\frac{1}{2}+\frac{1}{\gamma^2}$ if the sequence $\E[J_{n}(S)^p]$ is uniformly bounded in $n$.\\
\emph{The case $\beta$ general:}\\
One can check that the same argument above as in assertion i) goes through by replacing sub-additivity by Minkowski's inequality for $p<1$.
\qed\\
~\\
We now gather the sufficient conditions in Lemma~\ref{lemmomapp} and Lemma~\ref{lemmomapp2} to define the unit volume measure of the disk.
\begin{corollary}\label{relaxseiberg}
Suppose $\alpha_i,\beta_j<Q$ for all $i,j$. The random variable $Z_0(\D)$ has a moment of order $\frac{Q-\sum_i\alpha_i-\frac{1}{2}\sum_j\beta_j}{\gamma}$ if 
\begin{equation}
Q-\sum_i\alpha_i-\frac{1}{2}\sum_j\beta_j<\frac{2}{\gamma}\wedge2\min\limits_{i}(Q-\alpha_i)\wedge\min\limits_{j}(Q-\beta_j).
\end{equation}
In particular, under these conditions, the unit volume measure given by Corollary \ref{unitvolume} is well defined.
\end{corollary}
\proof Let
$$p=\frac{Q-\sum_i\alpha_i-\frac{1}{2}\sum_j\beta_j}{\gamma}.$$
From Lemma~\ref{lemmomapp} and Lemma~\ref{lemmomapp2}, the random variable $Z_0(S)$ has a moment of order $p$ if $S$ is a small open neighborhood of any boundary point, not containing any bulk insertions, under the condition
$$p<\frac{2}{\gamma^2}\wedge \frac{1}{\gamma}\min\limits_{j}(Q-\beta_j).$$
Since the boundary of $\D$ is finite, one can find a covering of a small open neighborhood of $\partial\D$ that does not contain any bulk insertions using a finite number of such $S$. On the complementary of this neighborhood, from the results of unit volume Liouville measure on the Riemann sphere (\cite[Section 3.4]{DKRV}), we know that $Z_0$ has a moment of order $p$ under the condition
$$p<\frac{4}{\gamma^2}\wedge \frac{2}{\gamma}\min\limits_{i}(Q-\alpha_i).$$
Combining these two considerations we get that $Z_0(\D)$ has a moment of order $p$ if
$$p<\frac{2}{\gamma^2}\wedge \frac{2}{\gamma}\min\limits_{i}(Q-\alpha_i)\wedge \frac{1}{\gamma}\min\limits_{j}(Q-\beta_j).$$
Replacing $p$ by $\frac{Q-\sum_i\alpha_i-\frac{1}{2}\sum_j\beta_j}{\gamma}$ yields the corollary.
\qed\\
~\\
We now give proofs of the cutoff estimations (Lemma~\ref{cutoffnonadjacent} to Lemma~\ref{cutoffadjacent}). The idea is to construct auxilary Gaussian processes and use a continous version of Slepian's lemma (Proposition~\ref{SlepianLemma} below, see \cite[Theorem~3]{Zeitouni} for a general formulation) to compare their functionals.\\
\begin{figure}[h]
\centering
\begin{tikzpicture}
\draw (0,0) -- (0,2)  -- (2,2) -- (2,0) -- cycle;
\fill [color=black] (2,1) -- (3,1)  -- (3,3) -- (2,3) -- cycle;
\fill [color=gray] (2,1) -- (1,1)  -- (1,3) -- (2,3) -- cycle;

\draw (5,0) -- (5,2)  -- (7,2) -- (7,0) -- cycle;
\fill [color=black] (7,1) -- (8,1)  -- (8,3) -- (7,3) -- cycle;
\fill [color=gray] (8.1,1) -- (9.1,1)  -- (9.1,3) -- (8.1,3) -- cycle;

\draw (11,0) -- (11,2)  -- (13,2) -- (13,0) -- cycle;
\fill [color=black] (13,1) -- (14,1)  -- (14,3) -- (13,3) -- cycle;
\fill [color=gray] (11.5,1) -- (12.5,1)  -- (12.5,3) -- (11.5,3) -- cycle;
\end{tikzpicture}
\caption{Cut-off estimations: $A$ is illustrated by a white box, $B$ by a black box. The grey box denotes the box $B$ after some linear transformation.\\Left: the symmetry case. Middle: $B$ is moving away from $A$. Right: $B$ is moving inside of $A$.}
\end{figure}

\begin{proof}[Proof of Lemma~\ref{cutoffnonadjacent}]~\\
Let $p,q>0$. Let $(\mathbf{x},\mathbf{y})$ be a Gaussian vector and define $$f\left(x_1,\dots,x_k,y_1\dots,y_l\right)=\left(\sum\limits_{1\leq i\leq k} p_i e^{\gamma x_i}\right)^p\left(\sum\limits_{1\leq j\leq l} q_j e^{\gamma y_j}\right)^q$$
with $p_i,q_j$ some non-negative weights. One verifies that
$$\partial_{x_i,y_j}f\geq 0$$
for all couples $(x_i,y_j)$.\\
Let $\overline{X}$ be a Gaussian field defined on the same set with the same kernel as $X$, independent of $X$. Let $\overline{M}_n$ be the regularised GMC measure associated with $\overline{X}$. Let $(\alpha_i)$ (resp. $(\beta_j)$) be a discret net approximating $A$ (resp. $B$).\\
Now consider two sets of Gaussian vectors:\\
-- $(\mathbf{x},\mathbf{y})$ with $x_i=X(\alpha_i)$, $y_j=X(\beta_j)$;\\
-- $(\mathbf{\overline{x}},\mathbf{\overline{y}})$ with $\overline{x}_i=X(\alpha_i)$, $\overline{y}_j=\overline{X}(\beta_j)$.\\
The difference between these two vectors is that by changing the $y$ component, the $A$ part and the $B$ part of the second vector becomes independent. It is clear that these two vectors possess the same correlation structure except that for all couples $(x_i,y_j)$ (resp. $(\overline{x}_i,\overline{y}_j)$), and we have
$$\mathds{E}[\overline{x}_i\overline{y}_j]\leq\mathds{E}[x_iy_j].$$
This allows us to apply Slepian's lemma (see Proposition~\ref{SlepianLemma}) to conclude that
$$\mathds{E}[f(\overline{x}_i,\overline{y}_j)]\leq\mathds{E}[f(x_i,y_j)].$$
Taking the Riemann integral limit, we obtain the first inequality in Lemma~\ref{cutoffnonadjacent}.\\
~\\
For the other inequality, we apply the same idea: choose three mutually independent standard normal Gaussian distributions $N, N', \overline{N}$, independent of $X$ and $\overline{X}$ and consider two sets of Gaussian vectors:\\
-- $(\mathbf{x},\mathbf{y})$ with $x_i=X(\alpha_i)+rN$, $y_j=X(\beta_j)+rN'$;\\
-- $(\mathbf{\overline{x}},\mathbf{\overline{y}})$ with $\overline{x}_i=X(\alpha_i)+r\overline{N}$, $\overline{y}_j=\overline{X}(\beta_j)+r\overline{N}$;\\
where $r$ is a constant we choose later. Observe that if $r^2\geq-2\ln\delta$, then these two vectors possess the same correlation structure except that for all couples $(x_i,y_j)$ (resp. $(\overline{x}_i,\overline{y}_j)$), we have
$$\mathds{E}[x_iy_j]\leq\mathds{E}[\overline{x}_i\overline{y}_j].$$
Now we apply Slepian's lemma (see Proposition~\ref{SlepianLemma}) to conclude that
$$\mathds{E}[f(x_i,y_j)]\leq \mathds{E}[f(\overline{x}_i,\overline{y}_j)].$$
Taking the Riemann integral limit, we obtain
$$\mathds{E}[e^{pr\gamma N}e^{qr\gamma N'}M_n(A)^p M_n(B)^q]\leq\mathds{E}[e^{(p+q)r\gamma\overline{N}}M_n(A)^p\overline{M}_n(B)^q].$$
Choosing $r^2=-2\ln\delta$, we get
$$\mathds{E}[M_n(A)^p M_n(B)^q]\leq \delta^{-2pq\gamma^2}\mathds{E}[M_n(A)^p]\mathds{E}[M_n(B)^q].$$
This gives the second inequality in Lemma~\ref{cutoffnonadjacent}.
\end{proof}

\begin{proof}[Proof of Lemma~\ref{cutoffnontooui}]~\\
We apply the same idea as in the proof of Lemma~\ref{cutoffnonadjacent} with the same function $f$. We only give hints about the Gaussian vectors we consider: the rest follows the same lines as in the above proof.\\
Suppose $(\alpha_i)$ approximate $A$ and $(\beta_j)$ approximate $B$. Let $\beta^t_j=\beta_j+te_1$ where $e_1$ is the first coordinate. Consider two sets of Gaussian vectors:\\
-- $(\mathbf{x},\mathbf{y})$ with $x_i=X(\alpha_i)$, $y_j=X(\beta_j)$;\\
-- $(\mathbf{\overline{x}},\mathbf{\overline{y}})$ with $\overline{x}_i=X(\alpha_i)$, $\overline{y}_j=X(\beta^t_j)$.\\
These two vectors possess the same correlation structure except that for all couples $(x_i,y_j)$ (resp. $(\overline{x}_i,\overline{y}_j)$), and we have
$$\mathds{E}[\overline{x}_i\overline{y}_j]\leq\mathds{E}[x_iy_j].$$
In particular, the constant can be chosen to be $C=1$ for all $t>0$.
\end{proof}

\begin{proof}[Proof of Lemma~\ref{cutoffsymmetry}]~\\
We apply the same idea as in the proof of Lemma~\ref{cutoffnonadjacent} with the same function $f$. We only give hints about the Gaussian vectors we consider: the rest follows the same lines as in the above proof.\\
Suppose $(\alpha_i)$ approximate $A$ and $(\beta_j)$ approximate $B$. Let $\overline{\beta}_j$ be the symmetry of $\beta_j$ with respect to the hyperplan $H=H_{a^r_1}=H_{b^l_1}$. Consider two sets of Gaussian vectors:\\
-- $(\mathbf{x},\mathbf{y})$ with $x_i=X(\alpha_i)$, $y_j=X(\beta_j)$;\\
-- $(\mathbf{\overline{x}},\mathbf{\overline{y}})$ with $\overline{x}_i=X(\alpha_i)$, $\overline{y}_j=X(\overline{\beta}_j)$.\\
These two vectors possess the same correlation structure except that for all couples $(x_i,y_j)$ (resp. $(\overline{x}_i,\overline{y}_j)$), and we have
$$\mathds{E}[\overline{x}_i\overline{y}_j]\geq\mathds{E}[x_iy_j].$$
Applying Slepian's lemma gives us the desired inequality.
\end{proof}
\begin{proof}[Proof of Lemma~\ref{cutoffadjacent}]~\\
Suppose $t<0$ and such that $[b^l_1+t,b^r_1+t]\subset[a^l_1,a^r_1]$. For simplicity we left out the second coordinate (i.e. $a_2, b_2$) in the following calculations: for example, the rectangle $A=[a^l_1,a^r_1]\times[a^l_2,a^r_2]$ will be simply denoted by $[a^l_1,a^r_1]$. We do not change the second cooordinate of $A$ (resp. $B$) in the following calculation where all linear shifts and symmetries preserve the second coordinate.\\
Because for all $x,y\geq 0$, $x^p+y^p$ and $(x+y)^p$ only differ by some positive multiplicative constant (which only depends on $p$), we have, with $C$ denoting some constant independent of $n$ which might change from line to line:
\begin{align*}
&\mathds{E}[M_n(A)^p M_n(B_t)^q]\\
\stackrel{\textcircled{\small a}}{\geq}&C(\mathds{E}[M_n([a^l,b^l+t])^p M_n([b^l+t,b^r+t])^q]+\mathds{E}[M_n([b^l+t,a^r])^p M_n([b^l+t,b^r+t])^q])\\
\stackrel{\textcircled{\small b}}{=}{}&C(\mathds{E}[M_n([a^l-t,b^l])^p M_n([b^l,b^r])^q]+\mathds{E}[M_n([b^l+t,a^r])^p M_n([a^r+b^l-b^r,a^r])^q])\\
\stackrel{\textcircled{\small c}}{\geq}&C(\mathds{E}[M_n([a^l-t,a^r])^p M_n([b^l,b^r])^q]+\mathds{E}[M_n([b^l+t,a^r])^p M_n([b^l,b^r])^q])\\
\stackrel{\textcircled{\small d}}{\geq}&C(\mathds{E}[M_n([a^l-t,a^r])^p M_n([b^l,b^r])^q]+\mathds{E}[M_n([a^l,a^l-t])^p M_n([b^l,b^r])^q])\\
\stackrel{\textcircled{\small e}}{\geq}&C\mathds{E}[M_n([a^l,a^r])^p M_n([b^l,b^r])^q]\\
={}&C\mathds{E}[M_n(A)^p M_n(B)^q]
\end{align*}
where\\
-- \textcircled{\small a} comes from the comparaison between $x^p+y^p$ and $(x+y)^p$ above;\\
-- \textcircled{\small b} comes from the horizontal translation invariance of $X$ (for the first term) and symmetry of $X$ (for the second term);\\
-- \textcircled{\small c} comes from the symmetry case Lemma~\ref{cutoffsymmetry} (applied to the second term);\\
-- \textcircled{\small d} comes from the adjacent-to-non-adjacent case Lemma~\ref{cutoffnontooui} (applied to the second term);\\
-- \textcircled{\small e} comes from (again) the comparaison between $x^p+y^p$ and $(x+y)^p$ above.\\
This concludes Lemma~\ref{cutoffadjacent}.\\
One can find a pictural representation of each inequality above in Figure~\ref{lolpic} below.
\end{proof}
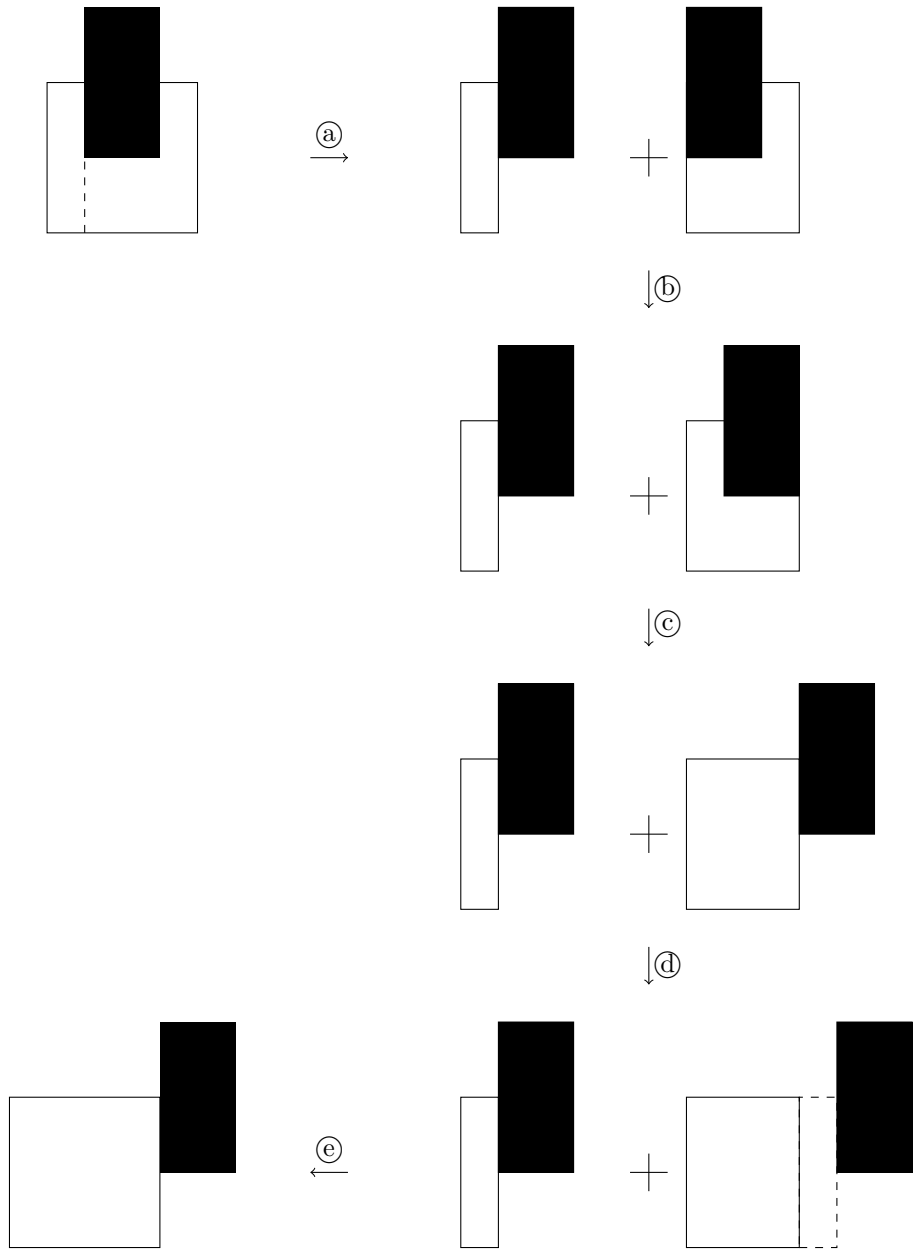
\begin{figure}[p]
\centering
\begin{tikzpicture}
\draw (-2,0) -- (-2,2)  -- (0,2) -- (0,0) -- cycle;
\draw [dashed] (-1.5,0) -- (-1.5,2);
\fill [color=black] (-1.5,1) -- (-0.5,1)  -- (-0.5,3) -- (-1.5,3) -- cycle;

\draw [->] (1.5,1) -- (2,1);
\node[draw=none] at (1.75,1.25) {\textcircled{\small a}};

\draw (6.5,0) -- (6.5,2)  -- (8,2) -- (8,0) -- cycle;
\draw [fill=black] (6.5,1) -- (7.5,1)  -- (7.5,3) -- (6.5, 3) -- cycle;
\draw (3.5,0) -- (3.5,2)  -- (4,2) -- (4,0) -- cycle;
\draw [fill=black] (4, 1) -- (5, 1)  -- (5, 3) -- (4, 3) -- cycle;
\draw (6, 1.25) -- (6,0.75);
\draw (5.75,1) -- (6.25,1);

\draw [->] (6,-0.5) -- (6,-1);
\node[draw=none] at (6.25,-0.75) {\textcircled{\small b}};

\draw (6.5, -4.5) -- (6.5, -2.5)  -- (8, -2.5) -- (8, -4.5) -- cycle;
\draw [fill=black] (7, -3.5) -- (8, -3.5)  -- (8, -1.5) -- (7, -1.5) -- cycle;
\draw (3.5, -4.5) -- (3.5, -2.5)  -- (4, -2.5) -- (4, -4.5) -- cycle;
\draw [fill=black] (4, -3.5) -- (5, -3.5)  -- (5, -1.5) -- (4, -1.5) -- cycle;
\draw (6,-3.25) -- (6,-3.75);
\draw (5.75,-3.5) -- (6.25,-3.5);

\draw [->] (6,-5) -- (6,-5.5);
\node[draw=none] at (6.25,-5.25) {\textcircled{\small c}};

\draw (6.5, -9) -- (6.5, -7)  -- (8, -7) -- (8, -9) -- cycle;
\draw [fill=black] (8, -8) -- (9, -8)  -- (9, -6) -- (8, -6) -- cycle;
\draw (3.5, -9) -- (3.5, -7)  -- (4, -7) -- (4, -9) -- cycle;
\draw [fill=black] (4, -8) -- (5, -8)  -- (5, -6) -- (4, -6) -- cycle;
\draw (6,-7.75) -- (6,-8.25);
\draw (5.75,-8) -- (6.25,-8);

\draw [->] (6,-9.5) -- (6,-10);
\node[draw=none] at (6.25,-9.75) {\textcircled{\small d}};

\draw (6.5, -13.5) -- (6.5, -11.5)  -- (8, -11.5) -- (8, -13.5) -- cycle;
\draw [fill=black] (8.5, -12.5) -- (9.5, -12.5)  -- (9.5, -10.5) -- (8.5, -10.5) -- cycle;
\draw (3.5, -13.5) -- (3.5, -11.5)  -- (4, -11.5) -- (4, -13.5) -- cycle;
\draw [fill=black] (4, -12.5) -- (5, -12.5)  -- (5, -10.5) -- (4, -10.5) -- cycle;
\draw (6,-12.25) -- (6,-12.75);
\draw (5.75,-12.5) -- (6.25,-12.5);
\draw [dashed] (8, -13.5) -- (8, -11.5)  -- (8.5, -11.5) -- (8.5, -13.5) -- cycle;

\draw [->] (2,-12.5) -- (1.5,-12.5);
\node[draw=none] at (1.75,-12.25) {\textcircled{\small e}};

\draw (-2.5,-13.5) -- (-2.5,-11.5)  -- (-0.5,-11.5) -- (-0.5,-13.5) -- cycle;
\fill [color=black] (-0.5,-12.5) -- (0.5,-12.5)  -- (0.5,-10.5) -- (-0.5,-10.5) -- cycle;

\node[draw=none] at (0,-14.5) {};

\end{tikzpicture}
\caption{Proof of Lemma~\ref{cutoffadjacent}, a graphical representation.}\label{lolpic}
\end{figure}

\subsubsection{Muirhead's inequality}
\begin{proposition}[Simple case of Muirhead's inequality]\label{SimpleMuirhead}~\\
Let $0<p_1\leq p_2\leq\frac{p}{2}$. Then for $a,b\geq 0$,
$$a^{p_1}b^{p-p_1}+a^{p-p_1}b^{p_1}\geq a^{p_2}b^{p-p_2}+a^{p-p_2}b^{p_2}.$$
\end{proposition}
\begin{proof}[Proof of Proposition~\ref{SimpleMuirhead}]~\\
We can rewrite the inequality as
$$a^{p_1}b^{p_1}(a^{p_2-p_1}-b^{p_2-p_1})(a^{p-p_2-p_1}-b^{p-p_2-p_1})\geq 0,$$
and this is true because by assumption, $p_2\geq p_1$, $p\geq p_1+p_2$.
\end{proof}
\noindent Inspired by \cite{KahPey}, we recall a consequence of this inequality:
\begin{corollary}[Expansion inequality]\label{ExpansionInequality}~\\
Let $k\in\N^{*}$ and $k\leq p\leq k+1$. Then for all $x,y\geq 0$, we have
\begin{equation}\label{eq:expansioninequality}
(x+y)^p\leq x^p+y^p+C(x^k y^{p-k}+x^{p-k}y^k)
\end{equation}
where $C$ is a constant depending only on $p$.
\end{corollary}
\begin{proof}[Proof of Corollary~\ref{ExpansionInequality}]~\\
We have, by sub-additivity of $x\mapsto x^{\frac{p}{k+1}}$,
\begin{align*}
&(x+y)^p\\
=&((x+y)^{p/k+1})^{k+1}\\
\leq&(x^{p/k+1}+y^{p/k+1})^{k+1}\\
\leq&x^p+y^p+\sum\limits_{i=1}^{k}C^{i}_{k+1}x^{\frac{ip}{k+1}}y^{\frac{(k+1-i)p}{k+1}}.
\end{align*}
By Proposition~\ref{SlepianLemma}, for each pair of crossterms with index $(i,k+1-i)$, we have (with $1\leq i\leq k$)
$$x^{\frac{ip}{k+1}}y^{\frac{(k+1-i)p}{k+1}}+x^{\frac{(k+1-i)p}{k+1}}y^{\frac{ip}{k+1}}\leq x^k y^{p-k}+x^{p-k}y^k.$$
Since the number of cross-terms is finite, one can choose $C$ big enough (say $C=2^{k+1}$) to conclude.
\end{proof}

\subsubsection{Slepian's lemma}
\begin{proposition}[Slepian's lemma]\label{SlepianLemma}
Let $T=\{1,\dots,n\}$. Let $\mathbf{X}$ and $\mathbf{Y}$ be two $n$-dimensional centered Gaussian vectors. Assume the existence of some subset $\Omega\subset T\times T$ such that:
\begin{align*}
&\mathds{E}[X_iX_j]\leq\mathds{E}[Y_iY_j],\quad (i,j)\in \Omega.\\
&\mathds{E}[X_iX_j]=\mathds{E}[Y_iY_j],\quad (i,j)\notin \Omega.
\end{align*}
Suppose $f:\mathds{R}^n\to\mathds{R}$ is smooth, with appropriate growth at infinity (exponential growth is fine) as well as its first and second derivative, and
$$\partial_{ij}f\geq 0\quad (\text{resp.}\leq 0),\quad (i,j)\in \Omega.$$
Then $\mathds{E}[f(X)]\leq\mathds{E}[f(Y)]$ (resp. $\mathds{E}[f(X)]\geq\mathds{E}[f(Y)]$).
\end{proposition}
\begin{proof}[Proof of Proposition~\ref{SlepianLemma}]~\\
See \cite[Theorem~3]{Zeitouni} for a slightly stronger form.
\end{proof}

\subsection{Proof of Lemma \ref{decomp}}\label{proof:decomp}
We introduce the Fourier coefficients $\alpha_{v}(n)\geq 0$ for $n \in \Z,v\in [1,\infty[ $ given by
\begin{equation*}
|\alpha_{v}(n)|^2=\frac{1}{2\pi} \int_{0}^{2\pi}  e^{-i n \theta}  (1-|v(e^{i \theta}-1)|^{\frac{1}{2}})_{+}   d \theta.
\end{equation*}

We consider a standard white noise $W$ on $[1,\infty[ \times \partial \D$ and we set
$$\bar{X}_\epsilon(e^{i \theta})=\sum_{n\in\Z} \alpha_{v}(n)e^{in\theta} \frac{1}{\sqrt{2\pi}}\int_1^{\frac{1}{\epsilon}}  \int_{0}^{2\pi} \frac{ e^{-i n u}}{\sqrt{2\pi v}} W(dv,d u)$$
Observe that $\alpha_{v}(n)=\alpha_{v}(-n)$ for $n\geq 0$ in such a way that $\bar{X}_\epsilon$ is real-valued. Then we can check that
\begin{equation*}
 \E[ \bar{X}_\epsilon (e^{i \theta}) \overline{\bar{X}_{\epsilon'} (e^{i \theta})}  ]  
 =   \sum_{n \in \Z} e^{i n (\theta-\theta')} \int_1^{\frac{1}{\epsilon}} |\alpha_v(n)|^2 \frac{dv}{v}  
= \int_1^{\frac{1}{\epsilon}}  (1-|v(e^{i \theta}-e^{i \theta'})|^{\frac{1}{2}})_+ \frac{dv}{v}.
\end{equation*}
Also, notice that we have
\begin{equation*} 
\bar{X} (e^{i \theta})=\sum_{n\in\Z} \alpha_{v}(n)e^{in\theta} \int_1^{\infty} \int_{0}^{2\pi} \frac{ e^{-i n u}}{\sqrt{2\pi v}} W(dv,d u).
\end{equation*}

Now we compute the correlations between the family $(\bar{X}_\epsilon)_\epsilon$ and the white noise $W$. We consider a smooth function $H:[1,+\infty[\to \R$ with compact support and a smooth function $f$ on $\partial \D$: we set $F=H\otimes f$ and
$$W(F)=\frac{1}{\sqrt{2\pi}}\int_{[1,+\infty[\times [0,2\pi]}H(v)f(e^{iu})W(dv,du).$$
Therefore, by considering the Fourier coefficients $(c_n(f))_n$ of $f$, we obtain 
\begin{align*}
T_\epsilon(F)(e^{i\theta})=&\E[ \bar{X}_\epsilon (e^{i \theta})W(H\otimes f)]\\
=&\sum_n\frac{1}{2\pi} \int_{[1,1/\epsilon[\times [0,2\pi]}\frac{\alpha_v(n)}{\sqrt{v}}e^{in\theta}f(e^{iu})e^{-inu}H(v)\,dvdu \\
=&\sum_n c_n(f)e^{in\theta} \int_{[1,1/\epsilon[}\frac{\alpha_v(n)}{\sqrt{v}} H(v)\,dv. 
\end{align*}
Because $H$ has compact support, it is readily seen that this series defines a continuous function of $\theta$, which converges uniformly as $\epsilon\to 0$ towards a continuous function given by
$$T(F)(e^{i\theta})=\sum_n c_n(f)e^{in\theta} \int_{[1,\infty[}\frac{\alpha_v(n)}{\sqrt{v}} H(v)\,dv.$$

\subsection{Backgrounds on fractional Brownian sheet}
We look at the main theorem in \cite{acosta} and we slightly modify the hypothesis (1.2).\\
Let $\{(Y_\epsilon^x : x\in[0,1]^d\}_{\epsilon>0}$ be a family of centerd Gaussian fields indexed by $[0,1]^d$ where $d$ is the dimension of the space. We suppose that for some constant $0<C_Y<\infty$,
\begin{equation}
\forall x,y\in[0,1]^d, \forall\epsilon>0, |Cov(Y_\epsilon^x,Y_\epsilon^y)+\log(\max\{\epsilon,|x-y|\})|\leq C_Y
\end{equation}
\begin{equation}\label{new1.2}
\mathds{E}[(Y_\epsilon^x - Y_\epsilon^y)^2]\leq C_Y \epsilon^{-1/2}|x-y|^{1/2} ~\text{if}~ |x-y|\leq\epsilon
\end{equation}
where $|\cdot|$ is the Euclidean distance.\\
We claim that
\begin{theorem}
There exist constants $0<c,C<\infty$ and a small $\epsilon_0>0$ (all depending of $C_Y$ and $d$) such that for all $0<\epsilon\leq\epsilon_0$ and all $\lambda\geq 0$,
$$\mathds{P}[|\max\limits_{x\in[0,1]^d}Y_\epsilon^x-m_\epsilon|\geq\lambda]\leq Ce^{-c\lambda}$$
\end{theorem}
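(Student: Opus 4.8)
The plan is to verify that the proof of the main theorem of \cite{acosta} carries over when the original regularity assumption is replaced by the weaker H\"older-type bound \eqref{new1.2}. The covariance assumption is left untouched, so every step of Acosta's argument that relies only on it applies verbatim; in particular the centering $m_\epsilon$ and the leading-order analysis are unchanged. The only places where the small-scale regularity enters are the passages between the maximum over a discrete grid of mesh comparable to $\epsilon$ and the maximum over the continuum, both in the first-moment bound (right tail) and in the second-moment/barrier bound (left tail). It therefore suffices to re-establish the single ingredient that these passages require, namely a uniform control of the oscillation of $Y_\epsilon$ inside a cell of side $\epsilon$.

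First I would fix a grid of cubes $(B)$ of side comparable to $\epsilon$ covering $[0,1]^d$, so that there are $O(\epsilon^{-d})$ of them, and set $O_B=\sup_{x,y\in B}|Y_\epsilon^x-Y_\epsilon^y|$. For $x,y$ in a common cell one has $|x-y|\leq C\epsilon$, so \eqref{new1.2} gives that the canonical metric $d(x,y)=\E[(Y_\epsilon^x-Y_\epsilon^y)^2]^{1/2}\leq C_Y^{1/2}\epsilon^{-1/4}|x-y|^{1/4}$ is bounded by an absolute constant across each cell and is H\"older-$1/4$. Dudley's entropy bound then yields $\sup_{\epsilon,B}\E[O_B]<\infty$ (the entropy integral converges because the covering numbers of a cell in the metric $d$ grow only polynomially in $1/u$), and the Borell--TIS inequality, with variance proxy $\sup_{x,y\in B}\E[(Y_\epsilon^x-Y_\epsilon^y)^2]=O(1)$, gives $\P(O_B\geq t)\leq e^{-ct^2}$ for large $t$, with constants uniform in $\epsilon$ and $B$. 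In particular $\sup_{\epsilon,B}\E[e^{sO_B}]<\infty$ for every fixed $s$.

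With this in hand the right tail is a soft union bound: writing $\max_{x\in B}Y_\epsilon^x\leq Y_\epsilon^{x_B}+O_B$ with $x_B$ the cell centre, the Gaussian tail of $Y_\epsilon^{x_B}$ (whose variance is $\log\frac1\epsilon+O(1)$ by the covariance hypothesis) together with the exponential integrability of $O_B$ produces $\P(\max_x Y_\epsilon^x\geq m_\epsilon+\lambda)\leq Ce^{-c\lambda}$ after summing over the $O(\epsilon^{-d})$ cells, the entropy factor $\epsilon^{-d}$ being balanced against the Gaussian tail precisely through the choice of the sub-leading $\log\log\frac1\epsilon$ term in $m_\epsilon$, exactly as in \cite{acosta}. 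For the left tail I would follow Acosta's second-moment/barrier construction line by line; each appeal there to the discretisation and to the old modulus is replaced by the estimate of the previous paragraph, so that the continuous maximum differs from the grid maximum by at most the exponentially integrable correction $\max_B O_B$, which does not perturb the leading second-moment asymptotics. Combining the two tails gives the claim.

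The hard part is the left tail: the second-moment method for the lower deviation of the maximum of a log-correlated field is delicate, and one must check that downgrading the modulus from Lipschitz to H\"older-$1/4$ does not damage the truncated second-moment estimates. The reassuring point, and the reason the argument goes through, is that \eqref{new1.2} only affects fluctuations on scales below $\epsilon$, whose contribution is $O(1)$ and exponentially integrable by the oscillation bound; the leading $m_\epsilon$-scale behaviour is governed entirely by the unchanged covariance hypothesis, so Acosta's estimates survive.
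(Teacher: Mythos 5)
There is a genuine gap, and it sits exactly where you locate the ``easy'' part. Your right tail is claimed to follow from a soft union bound over the $O(\epsilon^{-d})$ cells, with the entropy ``balanced against the Gaussian tail through the sub-leading $\log\log\frac1\epsilon$ term in $m_\epsilon$''. But that correction enters $m_\epsilon$ with a \emph{negative} sign (in the normalization of this paper, $m_\epsilon=2\ln\frac1\epsilon-\frac32\ln\ln\frac1\epsilon$ on the circle), so the expected number of cell centres exceeding $m_\epsilon+\lambda$ grows like $\ln\frac1\epsilon\cdot e^{-c\lambda}$ and the union bound gives nothing uniform in $\epsilon$; no choice of the split between $Y_\epsilon^{x_B}$ and $O_B$, nor a Chernoff bound on $Y_\epsilon^{x_B}+O_B$ via Cauchy--Schwarz, repairs this. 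Removing that polylogarithmic loss is precisely the hard, Bramson-type truncated first-moment/barrier part of \cite{acosta}, and it is carried out there not by a union bound but by Slepian/Sudakov--Fernique comparison with a modified branching random walk. Relatedly, your left-tail sentence asserting that the continuous maximum exceeds the grid maximum by the ``$O(1)$, exponentially integrable'' quantity $\max_B O_B$ is false: the maximum of $\epsilon^{-d}$ oscillations with uniform Gaussian tails is of order $\sqrt{\ln\frac1\epsilon}$. (For the left tail this particular point is harmless, since the continuum maximum dominates the grid maximum trivially, but it cannot be invoked as stated.)

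The deeper issue is that hypothesis \eqref{new1.2} is not used in \cite{acosta} merely to pass from a grid to the continuum. It is used to construct, on each $\epsilon$-box, an auxiliary Gaussian field $\Phi_\epsilon$, independent from box to box, whose increments \emph{dominate} those of $Y_\epsilon$ inside the box, so that the comparison inequalities needed for both tails apply; one then needs a uniform Gaussian tail for $\sup_{x\in\Box_\epsilon^v}\Phi_\epsilon(x)$ (Acosta's Lemma 2.2). An oscillation bound on $Y_\epsilon$ itself, however sharp, is not a substitute for this domination. The content of the argument you are reproving is exactly this replacement: under the weakened modulus \eqref{new1.2} Acosta's Brownian sheet no longer dominates, and one must take a fractional Brownian sheet with Hurst index $1/4$ in each coordinate, rescaled by a large parameter $p$ so that $c\,p^{d/2}\epsilon^{-1/2}|x-y|^{1/2}\geq C_Y\,\epsilon^{-1/2}|x-y|^{1/2}$, and then reprove the sup-tail estimate for that field by a majorizing-measure bound together with Borell's inequality. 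Your Dudley/Borell--TIS computation is essentially that last estimate applied to the wrong field; the comparison step, which is where \eqref{new1.2} really matters, is missing.
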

We adapt the proof by introducing the fractional Brownian sheet. Recall that a (one-dimensional) fractional Brownian sheet $B^{H}_0=\{B^{H}_0, t\in\mathds{R}^N\}$ with Hurst index $H=(H_1,\dots,H_N), 0<H_j<1$ is a real-valued centered Gaussian field with covariance structure
$$\mathds{E}[B_0^H(s)B_0^{H}(t)]=\prod\limits_{j=1}^{N}\frac{1}{2}(|s_j|^{2H_j}+|t_j|^{2H_j}-|s_j-t_j|^{2H_j}), s,t\in\mathds{R}^N.$$\\
In particular $B_0^{H}$ is self-similar, i.e. for all constants $c>0$,
$$\big\lbrace B_0^{H}(ct), t\in\mathds{R}^N \big\rbrace=\big\lbrace c^{\sum_{j=1}^{N}H_j}B_0^{H}(t),t\in\mathds{R}^N \big\rbrace$$
in distribution.\\
In view of comparing with equation (\ref{new1.2}), we will choose a $d$-dimensional vector $H$ with all $H_j$ equal to $1/4$. Let us denote this particular fractional Brownian sheet by $\Phi$.\\
We now define the field $\Phi_\epsilon$ on $[0,\epsilon[^d$ by linearly shrinking the region $[p,2p[^d$ of $\Phi$ onto $[0,\epsilon[^d$, that is, $(\Phi_\epsilon(x),x\in[0,\epsilon[^d)=(\Phi(l(x)),l(x)\in[p,2p[^d)$ where $l$ is the affine map from $[0,\epsilon[^d$ to $[p,2p[^d$. Notice that $\Phi_\epsilon$ depends on the choice of $p$, and $p$ can be chosen as large as desired.\\
Let us recall two estimations that are useful for the proof (compare with equations (2.7) and (2.8) in \cite{acosta}):\\
Following the definition of fractional Brownian sheet:
\begin{equation}\label{new2.7}
p^{d/2}\leq Var(\Phi_\epsilon(x))\leq (2p)^{d/2}
\end{equation}
Combine self-similarity of $\Phi$ with lemma 3.4 from \cite{ayachexiao5} we deduce that there exist $c,C>0$ such that
\begin{equation}
cp^{d/2}\epsilon^{-1/2}|x-y|_2^{1/2}\leq\mathds{E}[(\Phi_\epsilon(x)-\Phi_\epsilon(y))^2]\leq Cp^{d/2}\epsilon^{-1/2}|x-y|_2^{1/2}
\end{equation}
where $|\cdot|_2$ is the $2$-norm, which is equivalent to the Euclidean norm.\\
New following \cite{acosta} we will divide $[0,1[^d$ into boxes of side length $\epsilon>0$ and assign values to each box using independent copies of $\Phi_\epsilon$.\\
We first recover Lemma~2.2 in \cite{acosta}. We claim that
\begin{lemma}
There exist constants $0<c,C<\infty$ (depending on $p$ and $d$) such that
\begin{equation}
\sup\limits_{v\in V_\epsilon}\mathds{P}(\sup\limits_{x\in\Box_\epsilon^v}\Phi_\epsilon(x)\geq\lambda)\leq Ce^{-c\lambda^2}
\end{equation}
\end{lemma}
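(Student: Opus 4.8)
The plan is to derive the tail bound by combining the Borell--TIS concentration inequality with Dudley's entropy bound, the entire difficulty being to verify that every constant produced is uniform in $\epsilon$. First I would reduce to a single box: since the boxes $(\Box_\epsilon^v)_{v\in V_\epsilon}$ carry i.i.d.\ copies of $\Phi_\epsilon$, the law of $\sup_{x\in\Box_\epsilon^v}\Phi_\epsilon(x)$ does not depend on $v$, so it suffices to control the tail of $M:=\sup_{x\in[0,\epsilon[^d}\Phi_\epsilon(x)$ for one copy, and the supremum over $v\in V_\epsilon$ in the statement becomes automatic.

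Next I would apply the Borell--TIS inequality to the centered Gaussian field $\Phi_\epsilon$ on the box, which yields $\mathds{E}[|M-\mathds{E}[M]|\geq t]\leq e^{-t^2/(2\sigma^2)}$ with $\sigma^2=\sup_x \mathrm{Var}(\Phi_\epsilon(x))$. By \eqref{new2.7} we have $\sigma^2\leq (2p)^{d/2}$, which is a constant independent of $\epsilon$, so the Gaussian exponent is uniformly controlled. It then remains to bound $\mathds{E}[M]$, for which I would invoke Dudley's entropy integral $\mathds{E}[M]\leq C\int_0^{D}\sqrt{\log N(\delta)}\,d\delta$, where $N(\delta)$ is the covering number of the box in the canonical metric $\rho(x,y)=\mathds{E}[(\Phi_\epsilon(x)-\Phi_\epsilon(y))^2]^{1/2}$ and $D$ is its $\rho$-diameter (note that $\mathds{E}[\Phi_\epsilon(x)]=0$ lets one replace $\mathds{E}[M]$ by $\mathds{E}[\sup_x(\Phi_\epsilon(x)-\Phi_\epsilon(x_0))]$).

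The crucial computation, which is the heart of the uniformity, is the following scaling check. From the two-sided increment estimate one gets $\rho(x,y)\leq (Cp^{d/2})^{1/2}\epsilon^{-1/4}|x-y|_2^{1/4}$. Since the box has Euclidean side $\epsilon$, its $\rho$-diameter is $\lesssim \epsilon^{-1/4}(\epsilon\sqrt d)^{1/4}=d^{1/8}(Cp^{d/2})^{1/2}$, which is independent of $\epsilon$. Likewise, covering the box at $\rho$-scale $\delta$ requires Euclidean balls of radius $r\sim \epsilon\,\delta^4$, of which $\sim(\epsilon/r)^d\sim \delta^{-4d}$ suffice, so $\log N(\delta)\lesssim d\log(1/\delta)$ up to an $\epsilon$-independent additive constant. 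The factor $\epsilon^{-1/4}$ in $\rho$ is cancelled exactly by the shrinking $\epsilon$-scale of the box, so Dudley's integral converges and is bounded by a constant depending only on $p$ and $d$; this is precisely the self-similarity of the fractional Brownian sheet at work. Consequently $\mathds{E}[M]\leq C_0(p,d)$ uniformly in $\epsilon$.

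Finally I would assemble the pieces. For $\lambda\geq 2C_0$ I write $\mathds{P}(M\geq\lambda)\leq \mathds{P}(M-\mathds{E}[M]\geq \lambda/2)\leq e^{-\lambda^2/(8\sigma^2)}\leq e^{-c\lambda^2}$ with $c=1/(8(2p)^{d/2})$, while for $0\leq\lambda<2C_0$ the bound $\mathds{P}(M\geq\lambda)\leq 1\leq e^{4cC_0^2}e^{-c\lambda^2}$ holds trivially, so enlarging $C$ to absorb the factor $e^{4cC_0^2}$ finishes the argument. The main obstacle is not any single estimate but the verification of $\epsilon$-uniformity of both $\sigma^2$ and the entropy integral: the dangerous-looking $\epsilon^{-1/2}$ in the increment bound is harmless precisely because it is matched by the Euclidean size $\epsilon$ of the boxes, which is the whole reason the fractional Brownian sheet with Hurst exponent $1/4$ was chosen.
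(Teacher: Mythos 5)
Your proof is correct and follows essentially the same route as the paper: the paper bounds $\mathds{E}[\sup_{x}\Phi_\epsilon(x)]$ via Fernique's majorizing measure argument with the uniform measure on the box (the lower bound $\mu(B(x,r))\geq C r^{4d}$ is exactly your covering-number estimate $N(\delta)\lesssim\delta^{-4d}$, so the two entropy integrals coincide up to constants), and then concludes with Borell's inequality using the variance bound \eqref{new2.7}, just as you do. The key point in both arguments is identical: the $\epsilon^{-1/4}$ in the canonical metric is exactly compensated by the Euclidean side length $\epsilon$ of the box, making every constant uniform in $\epsilon$.
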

To prove this lemma we use Fernique's majorizing measure argument. Notice that
$$B(x,r):=\big\lbrace y\in \Box_\epsilon^v : \mathds{E}[(\Phi_\epsilon(x)-\Phi_\epsilon(y))^2]\leq r^2 \big\rbrace\supset \big\lbrace y\in \Box_\epsilon^v : C p^{d/2}\epsilon^{-1/2}|y-x|_2^{1/2}\leq r^2 \big\rbrace$$
so that
$$\mu(B(x,r))\geq C r^{4d}$$
for some $C>0$ depending on $p$ and $d$.\\
Applying the majorizing measure technique we obtain
$$\mathds{E}[\sup\limits_{x\in\Box_\epsilon^v}\Phi_\epsilon(x)]\leq C\int_{0}^\infty \sqrt{-\log(cr^{4d})}dr\leq C<\infty$$
then we complete the proof of Lemma~2.2 by invoking Borell's inequality:
$$\mathds{E}[\sup\limits_{x\in\Box_\epsilon^v}\Phi_\epsilon(x)\geq C+\lambda]\leq e^{-\lambda^2/2(2p)^{d/2}}$$
the quantity on the right results from (\ref{new2.7}).\\
We then follow exactly the same steps as in \cite{acosta} (the only difference is to replace some $d$'s by $d/2$'s because of (\ref{new2.7})) to recover the main theorem.


\hspace{10 cm}

 \end{document}